\documentclass[reqno]{amsart}

\usepackage[fontsize=10.3 pt]{scrextend}
\usepackage[T1]{fontenc}
\usepackage{pgf,pgfarrows,pgfnodes,pgfautomata,pgfheaps,pgfshade,hyperref, amssymb}
\usepackage{amssymb}
\usepackage{tikz-cd}
\usepackage{amsmath}

\usepackage[english]{babel}
\usepackage[capitalize]{cleveref}
\usepackage{mathtools}
\usepackage[colorinlistoftodos]{todonotes}
\usepackage{soul}
\usepackage[utf8]{inputenc} 
\usepackage[T1]{fontenc}
\usepackage[normalem]{ulem}

\usepackage{tikz}
\usetikzlibrary{arrows}

\usepackage{xcolor}

\usepackage{faktor}
\usepackage{xfrac}

\usepackage{enumerate}
\usepackage{comment}

\def\R{{\mathbb R}}
\def\Z{{\mathbb Z}}

\def\N{{\mathbb N}}

\newcommand{\supp}{\mathsf{supp}}
\hypersetup{
    colorlinks,
    linkcolor={blue!30!black},
    citecolor={green!50!black},
    urlcolor={blue!80!black}
} 

\newcommand{\nnorm}[1]{\lvert\!|\!| #1|\!|\!\rvert}

\usepackage{mathrsfs} 
\usepackage{dsfont}

\usepackage{graphicx} 

\newtheorem{theorem}{Theorem}[section]
\newtheorem{proposition}[theorem]{Proposition}

\newtheorem{lemma}[theorem]{Lemma}

\newcounter{thmcounter}

\newcounter{introthmcounter}

\newtheorem{corollary}[theorem]{Corollary}
\theoremstyle{definition}

\newtheorem*{definition*}{Definition}
\newtheorem{question}[theorem]{Question}
\newtheorem*{question*}{Question}
\newtheorem*{remark*}{Remark}
\newtheorem{thmx}{Theorem}

\newcounter{proofcount}
\AtBeginEnvironment{proof}{\stepcounter{proofcount}}

\makeatletter
\def\section{\@startsection{section}{1}%
  \z@{.5\linespacing\@plus.7\linespacing}
{.8\baselineskip}%
  {\normalfont\fontsize{11}{13}\centering\bfseries}%
}

\makeatletter
\def\subsection{\@startsection{subsection}{2}%
  \z@{.4\linespacing\@plus.7\linespacing}
{.6\baselineskip}%
  {\normalfont\centering\bfseries}%
}

\makeatletter                  
\@addtoreset{claim}{proofcount}
\makeatother

\theoremstyle{remark}

\newtheorem{remark}[theorem]{Remark}

\def\N{{\mathbb N}}
\newcommand{\E}{\mathbb{E}}

\newcommand{\Folner}{F\o{}lner}

\title{A refined structure theorem for polynomial return-time sets 
in minimal systems}
\author{Ioannis Kousek}
\address{Department of mathematics, University of Warwick}
\email{ioannis.kousek@warwick.ac.uk}
\date{}

\begin{document}

\begin{abstract}
\small{We prove a refinement of a recent structural result in 
topological dynamics due to 
Glasscock, Koutsogiannis, Le, Moreira, Richter, and 
Robertson, who 
showed that in a minimal system, polynomial return-times 
differ by non-piecewise syndetic sets from those in its 
maximal infinite-step pronilfactor. In particular, we prove 
that the infinite-step pronilfactor can be replaced by the
maximal $k$-step pronilfactor of the system, where $k$ 
depends only on the given polynomials, with $k-1$ being equal to 
the Host-Kra complexity of the polynomials.}
\end{abstract}

\maketitle

\section{Introduction}

If $T:X \to X$ is a homeomorphism of a compact metric space, $X$, 
we refer to $(X,T)$ as a \textit{topological dynamical system}. 
Given nonempty sets $U_1,\ldots,U_d\subset X$ and a polynomial 
tuple $p=(p_1,\ldots,p_d)\in \Z[x]^{d}$, we are interested in 
\textit{the set of return-times of $U_1,\ldots,U_d$ along $p$} 
defined as 
\begin{equation}\label{return time set eq}
R_p(U_1,\ldots,U_d)=\{n\in \Z: T^{-p_1(n)}U_1\cap \cdots \cap T^{-p_d(n)}U_d \neq \emptyset \}.    
\end{equation}

We will focus on 
\textit{essentially distinct} polynomial tuples 
$p=(p_1,\ldots,p_d)\in \Z[x]^d$, which 
means that $p_i-p_j$ is not constant for all $1 \leq i < j\leq d$. 
In the case where all polynomials vanish at $0$, this is 
equivalent to the polynomials being pairwise distinct. 

When considering minimal systems, it was shown for 
linear polynomials 
in \cite{Glasner_Huang_Shao_Weiss_Ye}, and, in the general case in \cite{Qiu}, that the 
maximal infinite-step 
pronilfactor is \textit{characteristic} for the 
polynomial recurrence 
exhibited in \eqref{return time set eq} in the following 
sense. 
\begin{theorem}[{\text{cf.} {\cite[Theorem B]{Qiu}}}]\label{Qiu}
Let $(X,T)$ be a minimal and invertible
topological dynamical system. Denote by $(X_{\infty},T)$ its maximal infinite-step pronilfactor, and let $\pi_{\infty}: X \to X_{\infty}$ be the associated factor map. For all $d\in \N$, all nonempty, open $U_1,\ldots,U_d \subset X$, and all 
essentially distinct $p\in \Z[x]^d$, if $R_p((\pi_{\infty} U_1)^{\circ},\ldots,(\pi_{\infty} U_d)^{\circ}) \neq \emptyset$, then $R_p(U_1,\ldots,U_d) \neq \emptyset$. 
\end{theorem}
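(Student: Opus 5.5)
The plan is to follow the strategy of \cite{Glasner_Huang_Shao_Weiss_Ye} and \cite{Qiu}. We pass to an O-diagram (almost one-to-one extension) so that $\pi_\infty$ can be assumed open. We then transfer the topological statement to a statement about measures on $X^d$ via polynomial ergodic averages, where the Host--Kra / Leibman characteristic factor theory applies.

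\textbf{Reduction.} By the Veech--Glasner structure theory for minimal systems, there is a commutative diagram $X^*\to X$, $X_\infty^*\to X_\infty$ with almost one-to-one vertical maps and an open factor map $\pi^*:X^*\to X_\infty^*$. Preimages of open sets under almost one-to-one maps contain nonempty open sets mapping into the original ones. Hence it suffices to treat the case where $\pi_\infty$ is open. In that case $(\pi_\infty U_i)^\circ=\pi_\infty U_i$. The open sets $U_i$ can also be shrunk to ones of the form $\pi_\infty^{-1}V_i\cap W_i$ after localising to fibres.

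\textbf{Key step.} Fix $n$ with $V:=\bigcap_i T^{-p_i(n)}\pi_\infty U_i\neq\emptyset$. By minimality and a standard translation trick, it suffices to produce some $m$ and a point $x$ with $T^{p_i(m)}x\in U_i$ for all $i$. For this we use the polynomial Furstenberg--Glasner type measure $\lambda_x$: a weak$^*$ limit of
\[
\frac1N\sum_{m<N}\delta_{(T^{p_1(m)}x,\ldots,T^{p_d(m)}x)},
\]
taken for generic points $x$ of an ergodic measure $\mu$ of full support. Leibman's theorem together with the Host--Kra theory shows that these averages are controlled by the pronilfactor $Z_\infty$. The measure-theoretic factor is identified with $X_\infty$ as a topological system for minimal distal nil-systems, using Qiu's/Huang--Shao--Ye's results that $X_\infty$ is a topological model. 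This lets us decompose $\lambda_x$ through the disintegration of $\mu$ over $X_\infty$. The pronil-orbit of $(\pi x,\ldots,\pi x)$ along $p$ visits $\prod \pi U_i$ with positive density, which follows from equidistribution of polynomial sequences in nilmanifolds (Leibman). The conditional measures on fibres $\pi^{-1}(y)$ have full support on the fibre because $\pi$ is open and $\mu$ has full support. Combining these gives $\lambda_x(U_1\times\cdots\times U_d)>0$, and hence a return time exists.

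\textbf{Main obstacle.} The hardest point is the fibre-support argument. We need that the disintegration of $\lambda_x$ over $X_\infty^d$ gives positive mass to $U_1\times\cdots\times U_d$ and not merely to $\pi^{-1}$ of the projected sets. This requires a relative independence statement: conditional on $Z_\infty$, the fibre measures of the polynomial joining are essentially products of the $\mu_y$. I would first try to establish this via the characteristic-factor property, namely that $\int\prod f_i\,d\lambda$ depends only on $\E(f_i|Z_\infty)$. Continuity and openness of $\pi$ are then used to pass from measure-theoretic conditional expectations to topological positivity on the relevant points $y$.
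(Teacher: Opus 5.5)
\textbf{There is a genuine gap.} It sits at the step you treat as routine: ``the measure-theoretic factor is identified with $X_\infty$.''

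The identification $Z_k=X_k$ (e.g.\ \cite[Lemma 8.3]{Dong_Donoso_Maass_Shao_Ye_infinite_step_nil:2013}) holds when the system \emph{is itself} a minimal pronilsystem. That is exactly why the paper runs its measure-theoretic argument (Proposition \ref{support trick}, Theorem \ref{k factor is characteristic}) only on $X_\infty$. Results of Huang--Shao--Ye type build special strictly ergodic models in which the two factors agree; they say nothing about the given $X$.

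For an arbitrary minimal $X$ with ergodic $\mu$, the measurable pronilfactor $Z_\infty(X,\mu,T)$ can be strictly larger than the topological $X_\infty$, and the measurable factor map is not continuous. Take a strictly ergodic, topologically mixing model of an irrational circle rotation (Lehrer's refinement of Jewett--Krieger). Then $X_\infty$ is a point, $\pi_\infty$ is trivially open, and the hypothesis of the theorem holds for every choice of nonempty open sets. But $Z_\infty(\mu)$ is the whole system. For $p=(0,n,2n)$, the Host--Kra/Leibman theory identifies the joining $\lambda$ with the pull-back of Haar measure on $\{(z,z+t,z+2t)\}$ in the circle. This is not $\mu\times\mu\times\mu$, which is what your ``relative independence over $X_\infty$ with product fibre measures $\mu_y$'' forces when the fibre is all of $X$.

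So the characteristic-factor property only says that $\int\prod f_i\,d\lambda$ depends on $\E(f_i\mid Z_\infty(\mu))$, and those conditional expectations are unrelated to the topological fibres $\pi_\infty^{-1}(y)$. In this example $\lambda(U_0\times U_1\times U_2)=\int\!\!\int 1_{A_0}(z)1_{A_1}(z+t)1_{A_2}(z+2t)\,dz\,dt$, where $A_i$ is the measurable image of $U_i$. Nothing in the topological hypothesis controls this quantity. Positivity of $\lambda$ on a box would also give positive density of return times, which the theorem does not assert and which the paper only obtains for pronilsystems.

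The paper does not reprove this statement. It quotes Qiu's Theorem B \cite{Qiu}, which builds on \cite{Glasner_Huang_Shao_Weiss_Ye}. That result says that in an O-diagram $\pi^*:X^*\to X^*_\infty$ there is a dense $G_\delta$ set of points $x$ whose orbit closures $\overline{\{(T^{p_1(n)}x,\ldots,T^{p_d(n)}x)\}}$ are $(\pi^*)^{(d)}$-saturated. The return-time formulation then follows by an elementary translation argument of the kind carried out in Appendix \ref{appendix}. Proving that saturation statement is the real content, and it has to get around the measurable/topological mismatch above rather than assume it away.

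Two further points in your sketch would also need repair:
\begin{itemize}
\item The O-diagram produces an open map onto $X^*_\infty$, an almost one-to-one extension of $X_\infty$ that is in general neither distal nor uniquely ergodic. It does not give an open map onto $X_\infty$ itself, so ``assume $\pi_\infty$ open'' is not a free reduction (compare the extra work in Appendix \ref{App B}).
\item Pointwise weak$^*$ limits $\lambda_x$ along single orbits are not governed by Leibman's $L^2$ theorem, since pointwise convergence of polynomial multiple averages is open in general. For mere nonemptiness, however, the averaged joining $\lambda$ would already suffice.
\end{itemize}
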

We remark that (see \cite[Lemma 3.2]{GKLMRR}) if $(Y,T)$ is any factor of a minimal system 
$(X,T)$, with factor map
$\pi:X\to Y$ (see Section 
\ref{prel dynamics} for precise definitions), then 
\begin{equation}\label{inclusion_0} 
R_p(U_1,\ldots,U_d) \subset R_p((\pi U_1)^{\circ},\ldots,(\pi U_d)^{\circ}).    
\end{equation}
Hence, Theorem \ref{Qiu}, guaranteeing that recurrence along 
$p$ can be ``lifted'' from the infinite-step pronilfactor to the
original system, is a ``surprising and highly useful converse''\footnote{quote from \cite[page 3]{GKLMRR}} to 
the above inclusion which shows that recurrence passes from any 
system to any of its factors. 

The main result in 
\cite{GKLMRR} refines this notion of characteristic factor to show 
that the difference between the return-time sets appearing in 
Theorem \ref{Qiu} cannot be large in a precise combinatorial 
sense. 

\begin{theorem}[{{\cite[Theorem A]{GKLMRR}}}]\label{GKMLRR}
Let $(X,T)$ be a minimal and invertible
topological dynamical system. Denote by $(X_{\infty},T)$ its maximal infinite-step pronilfactor, and let $\pi_{\infty}: X \to X_{\infty}$ be the associated factor map. For all $d\in \N$, all nonempty, open $U_1,\ldots,U_d \subset X$, and all 
essentially distinct $p\in \Z[x]^d$, the set 
\begin{equation}\label{set difference of return times}
R_p((\pi_{\infty} U_1)^{\circ},\ldots,(\pi_{\infty} U_d)^{\circ}) \setminus R_p(U_1,\ldots,U_d)     
\end{equation}   
is not piecewise syndetic.
\end{theorem}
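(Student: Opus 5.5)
Theorem \ref{GKMLRR} is cited from \cite{GKLMRR}. To prove it, I would argue by contradiction via the standard correspondence between piecewise syndetic sets and minimal subsystems of $(\beta\Z, \text{shift})$, or equivalently via dynamically syndetic points.

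Suppose the set in \eqref{set difference of return times} is piecewise syndetic. Then there exist a minimal system $(Z,S)$, a point $z\in Z$ and a nonempty open $V\subset Z$ such that $\{n: S^n z\in V\}$ is contained in that difference, up to a translate. I would pass to the product system $X\times Z$ and work with a minimal subsystem $W$ containing a suitable point of the form $(x,z)$. The factor $Z$ would record the ``bad'' times.

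Next I would apply Theorem \ref{Qiu} to the minimal system $W$. This needs two facts.
\begin{enumerate}
\item[(i)] The maximal infinite-step pronilfactor of $W$ is controlled by $X_\infty\times Z_\infty$. Concretely, the pronilfactor of a minimal subsystem of a product should be a subsystem of the product of the pronilfactors.
\item[(ii)] The relevant open sets $U_i\times V$, with $V$ placed on the coordinate carrying the polynomial $p_i$ as appropriate, project in $W_\infty$ to sets whose interiors still admit a common return time. This return time comes from $n\in R_p((\pi_\infty U_1)^\circ,\ldots)$ together with $S^{n}z\in V$.
\end{enumerate}

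Theorem \ref{Qiu} then produces a return time $n$ for the lifted sets in $W$. Such an $n$ lies both in $R_p(U_1,\ldots,U_d)$ and in the bad set, which is a contradiction. To make the $Z$-coordinate consistent with polynomial return times, I would reparametrize so that the condition in $Z$ is linear in $n$. One way is to use the diagonal action $T^{p_i(n)}\times S^{n}$. A cleaner way is to encode the time via the extra polynomial coordinate $p_{d+1}(n)=n$ in a joining, adding a constant shift if needed to keep the tuple essentially distinct.

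The main obstacle is step (ii): matching the pronilfactor projections. Nilsystem return-time sets are highly structured (Nil-Bohr sets), so the set $\{n: S^n z\in V\}$ must be shown compatible with $R_p$ on the pronilfactor. To handle this I would use the fact that return times of open sets in minimal pronilsystems along polynomials are dynamically central or syndetic sets that meet every piecewise syndetic set. I would combine this with a polynomial IP/van der Waerden-type argument in the product nilsystem $X_\infty\times Z_\infty$.
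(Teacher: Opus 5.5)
Your plan has a genuine gap at its core. The encoding step is already incorrect. If $z$ lies in a minimal system and $V\neq\emptyset$ is open, then $\{n: S^nz\in V\}$ is syndetic, so your claim would force the difference set to be syndetic. The correct dynamical characterization of piecewise syndeticity uses a point whose orbit closure merely \emph{contains} a minimal set meeting $V$.

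More seriously, even with a correct encoding, Theorem~\ref{Qiu} applied to a minimal $W\subset X\times Z$ is purely existential. It produces \emph{some} point $(x',z')\in W$ and \emph{some} $n$. Nothing ties $z'$ to the particular orbit of $z$ that records the bad times, so you never obtain an $n$ lying both in the bad set and in $R_p(U_1,\ldots,U_d)$. Membership in the bad set is a negative condition imposed at a specific time. The implication ``$R_p((\pi_\infty U_1)^\circ,\ldots)\neq\emptyset\Rightarrow R_p(U_1,\ldots)\neq\emptyset$'' says nothing at a prescribed time.

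The supporting steps also fail. Step (i) is unjustified, since joinings can create eigenvalues present in neither coordinate. Step (ii) is never actually formulated. The fact you invoke at the end is false: by Lemma~\ref{Brown}, a set meets every piecewise syndetic set iff its complement is not piecewise syndetic. But for an irrational rotation and small intervals $U,V$, the set $\{n: U\cap T^{-n}V\neq\emptyset\}$ is a Bohr set whose complement is syndetic.

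What is missing is the finite-intersection mechanism of \cite{GKLMRR}, which this paper reproduces in the proof of Theorem~\ref{finite step characteristic for infinite step systems}. Write $A$ and $B$ for the two return-time sets. Lemma~\ref{A-B pws} reduces the claim to showing that $\bigcap_{j=1}^m(B-a_j)$ is syndetic for every finite $\{a_1,\ldots,a_m\}\subset A$.

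For this one needs a \emph{simultaneous} strengthening of Theorem~\ref{Qiu}. For the shifted families $p_{j,i}(n)=p_i(n+a_j)-p_i(a_j)$, there must be a single $n_0$ with $V_j:=\bigcap_{i}T^{-p_i(n_0+a_j)}U_i\neq\emptyset$ for every $j$. This is \cite[Theorem 3.4]{GKLMRR}, the analogue of Theorem~\ref{k factor is characteristic}. It is proved for an open factor map between almost one-to-one extensions of $X$ and $X_\infty$ coming from Veech's O-diagram, which is how the interiors $(\pi_\infty U_i)^\circ$ are handled.

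Then minimality gives $r_j$ with $\bigcap_j T^{-r_j}V_j\neq\emptyset$. The Bergelson--McCutcheon IP polynomial Szemer\'edi theorem is applied to an essentially distinct tuple built from the re-shifted polynomials, with $\ell_j x$ added to separate the families. It shows that $\bigcap_j(B-a_j)$ contains a translate of a syndetic set.

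This ``one common return time for finitely many shifted families'' statement is what links piecewise syndeticity to recurrence. A single application of Theorem~\ref{Qiu}, in any auxiliary product system, cannot replace it.
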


Theorem \ref{Qiu} was strengthened by Huang, Shao and Ye in 
\cite{Huang_Shao_Ye} and the latter was refined by Ye and Yu in \cite{Ye_Yu} who managed to 
replace the infinite-step pronilfactor by some finite-step 
pronilfactor depending only on the polynomials. We state their 
result in a weaker form (for the original version, see Appendix \ref{appendix}). 

\begin{theorem}[{{\text{cf.} \cite[Theorem A]{Ye_Yu}}}]\label{Ye Yu}
Let $(X,T)$ be a minimal and invertible
topological dynamical system. Let $d\in \N$, and let $p\in \Z[x]^d$ be essentially distinct. Then, there is $k\in \N$ such that for all nonempty, open sets $U_1,\ldots,U_d\subset X$, if $R_p((\pi_k U_1)^{\circ},\ldots,(\pi_k U_d)^{\circ}) \neq \emptyset$, then $R_p(U_1,\ldots,U_d) \neq \emptyset$,
where $\pi_k: X\to X_k$ denotes the factor onto the maximal $k$-step pronilfactor of $X$. 
\end{theorem}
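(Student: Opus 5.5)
This is Ye and Yu's theorem, stated here in weakened form, so the natural plan is to reduce it to their stronger statement; I would give an argument along the following lines.

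\emph{Step 1: reduce to a dense-orbit statement.} By minimality and a standard reduction it suffices to prove the following. For an essentially distinct $p\in\Z[x]^d$ there is $k$ such that, for $\mu$-typical $x\in X$, the tuple $(T^{p_1(n)}x,\ldots,T^{p_d(n)}x)_{n\in\Z}$ is dense in a closed subset $N_x\subset X^d$ with
\[
(\pi_k\times\cdots\times\pi_k)(N_x)=\overline{\{(T^{p_1(n)}\pi_k x,\ldots,T^{p_d(n)}\pi_k x)\}}.
\]
Granting this, suppose $R_p((\pi_kU_1)^\circ,\ldots,(\pi_kU_d)^\circ)\neq\emptyset$.
\begin{itemize}
\item In the pronilsystem $X_k$, which is distal, the orbit closure of a diagonal point along $p$ has a well-understood structure. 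By Leibman's theory for nilsystems it is a sub-nilmanifold, and it contains a point of $(\pi_kU_1)^\circ\times\cdots\times(\pi_kU_d)^\circ$ whenever that open set meets the union of these orbit closures over $y\in X_k$.
\item Picking $x$ suitably, the lift statement produces $n$ with $T^{p_i(n)}x\in U_i$ for all $i$.
\end{itemize}

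\emph{Step 2: choose $k$ by PET induction and prove the lift statement.} I take $k$ to be the degree bound coming from van der Corput/PET induction, so that the Host--Kra seminorm $\nnorm{\cdot}_{k+1}$ controls the averages
\[
\frac1N\sum_n f_1(T^{p_1(n)}x)\cdots f_d(T^{p_d(n)}x).
\]
The topological counterpart is what Ye and Yu do: they work with the topological characteristic factor via $\mathbf{RP}^{[k]}$ and prove the relative statement through an O-diagram / saturation argument. Concretely, one considers a suitable minimal extension $X^*\to X$ in which $\pi_k$ becomes open. There one shows, by induction on the PET weight, that the orbit closure of $x^{\otimes d}$ under the polynomial action is $\pi_k^{\times d}$-saturated. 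The inductive step uses van der Corput differencing: replacing $p$ by $p(n+m)-p(n)$ lowers the weight, and the relative regionally proximal relation of order $k$ absorbs the fibre directions.

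\emph{Main obstacle.} The saturation step, namely showing that fibres of $\pi_k$ lie in the polynomial orbit closure, is the heart of the argument. It is precisely the content of \cite[Theorem A]{Ye_Yu}, and I would invoke that result (recalled in Appendix \ref{appendix}) rather than reprove it.

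With the appendix version in hand, the remaining work is the routine deduction. Take a point $(y_1,\ldots,y_d)$ in the orbit closure lying in $\prod_i(\pi_kU_i)^\circ$. Lift it to a point of $N_x$ in $\prod_i U_i$. Density of the orbit then yields $n\in R_p(U_1,\ldots,U_d)$.
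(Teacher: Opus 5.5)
\textbf{There is a genuine gap.} Your Step 1 reduction has no content. Your final deduction then uses a property that the result you invoke does not supply.

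If the $p$-orbit of $x$ is dense in the closed set $N_x$, then $N_x=L^p_x:=\overline{\{(T^{p_1(n)}x,\ldots,T^{p_d(n)}x):n\in\Z\}}$. Then $\pi_k^{(d)}(L^p_x)$ equals the $p$-orbit closure of $\pi_k(x)$ automatically, for every $x$ and every $k$, since a continuous image of a compact set is closed. So in your last step, a point $(y_1,\ldots,y_d)\in\prod_i(\pi_kU_i)^\circ$ of that orbit closure has some preimage in $\prod_i U_i$ and some preimage in $N_x$. Nothing forces these to be the same point.

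What you actually need is that $L^p_x$ contains the whole fibre $(\pi_k^{(d)})^{-1}(y)$ at a non-diagonal point, typically $y=(T^{p_1(n)}\pi_kx,\ldots,T^{p_d(n)}\pi_kx)$. Theorem~\ref{YeYu original} gives much less:
\begin{itemize}
\item it gives only $(\pi_k^{(d)})^{-1}\pi_k^{(d)}(x^{(d)})\subset L^p_x$, the fibre over the diagonal point;
\item it holds for $x$ in a dense $G_\delta$ set, which is topological genericity, not $\mu$-typicality;
\item it applies only to non-constant polynomials vanishing at $0$, whereas your $p$ is merely essentially distinct.
\end{itemize}
The full $\pi_k^{(d)}$-saturation you attribute to Ye and Yu is not what they prove, and it fails in general. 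If some $p_{i_0}$ is constant, the $i_0$-th coordinate of $L^p_x$ is a single point, while the corresponding fibre of $\pi_k$ need not be. The appeal to Leibman's description of orbit closures in $X_k$ is also unnecessary: an open set meets an orbit closure if and only if it meets the orbit.

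\textbf{The missing idea is to move the non-diagonal point to the diagonal by shifting the polynomials.} This is how the paper argues.
\begin{itemize}
\item Fix $n\in R_p((\pi_kU_1)^\circ,\ldots,(\pi_kU_d)^\circ)$. Set $q_i(m)=p_i(m+n)-p_i(n)$ and $V_i=T^{-p_i(n)}U_i$.
\item The $q_i$ vanish at $0$, are distinct because each $p_i-p_j$ is non-constant, and have degree at most $b=\max_i\deg p_i$. Also $\bigcap_i(\pi_kV_i)^\circ\neq\emptyset$.
\item The $k$ in Theorem~\ref{YeYu original} depends only on $d$ and $b$, so one $k$ serves for every shifted tuple. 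This is essential: $n$ depends on the $U_i$, but $k$ must not.
\item Pick $x$ in the generic set for $q$ with $\pi_k(x)\in\bigcap_i\pi_k(V_i)$. This is possible because $\pi_k^{-1}\big(\bigcap_i(\pi_kV_i)^\circ\big)$ is nonempty and open.
\item Choose $y_i\in V_i$ with $\pi_k(y_i)=\pi_k(x)$. Then $(y_1,\ldots,y_d)\in(\pi_k^{(d)})^{-1}\pi_k^{(d)}(x^{(d)})\subset L^q_x$, which yields $m$ with $T^{q_i(m)}x\in V_i$ for all $i$. That is, $m+n\in R_p(U_1,\ldots,U_d)$.
\end{itemize}
Equivalently, $(T^{p_1(n)}\times\cdots\times T^{p_d(n)})L^q_x=L^p_x$, and this map carries the diagonal fibre onto the fibre over $(T^{p_i(n)}\pi_kx)_i$. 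This is exactly the non-diagonal lifting your last step needs.

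If some $p_{i_0}$ is constant, then $q_{i_0}\equiv 0$. In that case choose $x$ inside $V_{i_0}$ and apply Theorem~\ref{YeYu original} to the remaining polynomials.
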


In view of Theorem \ref{Ye Yu}, it was speculated in 
\cite{GKLMRR} 
whether some finite-step pronilfactor, with the step 
depending 
only on the polynomial family, is characteristic in the sense 
of Theorem \ref{GKMLRR}. 

\begin{question}[{{\cite[Question 5.1]{GKLMRR}}}]\label{finite step nilfactor in GKLMRR}
Given $d\in \N$ and an essentially distinct tuple $p\in \Z[x]^d$, is there $k\in \N$ such that for all minimal and invertible
topological dynamical systems $(X,T)$ and all nonempty, open sets $U_1,\ldots,U_d \subset X$, 
\begin{equation}\label{finite set nilfactor eq}
R_p((\pi_k U_1)^{\circ},\ldots,(\pi_k U_d)^{\circ}) \setminus R_p(U_1,\ldots,U_d)   
\end{equation}
is not piecewise syndetic, where $\pi_k:X\to X_k$ denotes the factor map to the maximal $k$-step pronilfactor of $(X,T)$?   
\end{question}

Our main result answers this in the positive, by juxtaposing 
ideas from \cite{GKLMRR}, \cite{Ye_Yu}, and the structure 
theory of multiple ergodic averages (\cite{Host_Kra_nonconventional_averages_nilmanifolds:2005}, \cite{Ziegler07}), via the use of 
Host-Kra uniformity seminorms, in order to control polynomial 
averages that naturally 
appear in our ergodic theoretic approach to the problem.

\begin{thmx}\label{Main theorem}
Let $d\in \N$ and an essentially distinct tuple $p\in \Z[x]^d$. 
Then, there is $k\in \N$ such that for all minimal and invertible
topological dynamical systems $(X,T)$ and all nonempty, open 
sets $U_1,\ldots,U_d \subset X$,
\begin{equation}\label{finite set nilfactor eq}
R_p((\pi_k U_1)^{\circ},\ldots,(\pi_k U_d)^{\circ}) \setminus R_p(U_1,\ldots,U_d)   
\end{equation}
is not piecewise syndetic, where $\pi_k:X\to X_k$ denotes the 
factor map to the maximal $k$-step pronilfactor of $(X,T)$. 
Moreover, $k$ is such that $k-1$ is the Host-Kra 
complexity of the polynomial tuple $p\in \Z[x]^d$.  
\end{thmx}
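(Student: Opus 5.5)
I will follow the strategy of \cite{GKLMRR} for Theorem \ref{GKMLRR}. The idea is to replace every appeal to the infinite-step pronilfactor by the maximal $k$-step pronilfactor, and to use seminorm control of polynomial averages in place of the structure of $X_\infty$. The proof will be by contradiction. Suppose the difference set in \eqref{finite set nilfactor eq} is piecewise syndetic. By the standard correspondence, a piecewise syndetic set contains a dynamically syndetic structure: there are a minimal subsystem of $\beta\Z$ (or of a shift orbit closure) and an idempotent-type point that witnesses it. Passing to a suitable extension, we then get a minimal system $(Z,S)$ that is a joining of $X$ with a symbolic system encoding the difference set, together with an invariant measure $\mu$. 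This setup should mirror the reduction in \cite{GKLMRR}. The goal is to find $n$ in the difference set with $T^{-p_1(n)}U_1\cap\cdots\cap T^{-p_d(n)}U_d\neq\emptyset$, which is the contradiction.

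The ergodic core is a multiple average. We take continuous functions $f_i$ supported in $U_i$ (or close to $\mathbf 1_{U_i}$) and a function $g$ encoding membership of $n$ in the difference set. We then show that
\[
\lim_{N\to\infty}\frac1N\sum_{n=1}^N g(S^n z)\int \prod_{i=1}^d f_i\circ T^{p_i(n)}\,d\mu
\]
is positive for some ergodic $\mu$ on $X$, or more precisely for the relevant average along a F\o lner sequence supported on the syndetic piece. The key input is the Host--Kra/Leibman seminorm estimate. If $k-1$ is the Host--Kra complexity of $p$, then the average is controlled by $\|f_i\|_{U^{k}}$, so each $f_i$ may be replaced by $\E(f_i\mid \mathcal Z_{k-1})$. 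By \cite{Host_Kra_nonconventional_averages_nilmanifolds:2005} and \cite{Ziegler07}, $\mathcal Z_{k-1}$ is an inverse limit of $(k-1)$-step nilsystems. The index shift between measurable $\mathcal Z_{k-1}$ and topological $X_k$ will need care, and I would allow $k$ rather than $k-1$ to absorb this. I must also ensure that the measurable factor $\mathcal Z_{k-1}$ of $\mu$ is topologically realized through $\pi_k$. This is where I would invoke the Ye--Yu machinery behind Theorem \ref{Ye Yu} (their $O$-diagram / topological characteristic factor arguments). Those arguments give, after an almost one-to-one modification, a system in which the measurable nil-factor coincides with the topological factor $X_k$ and the fibre measures charge the $\pi_k$-preimages of open sets. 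Given that, positivity reduces to an average on the pronilsystem $X_k$ of the form
\[
\int \prod_i \tilde f_i(\pi_k(x)\cdot g(n)),
\]
where $\tilde f_i$ are positive on $(\pi_kU_i)^\circ$. Positivity along the syndetic piece then follows from the nilsystem equidistribution of polynomial orbits (Leibman), as in \cite{GKLMRR} with $X_k$ in place of $X_\infty$.

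The main obstacle is the transfer step: turning measurable seminorm control, which is stated for functions in $L^2(\mu)$, into a statement about open sets $U_i$ in a minimal topological system that need not be uniquely ergodic. The problem is that $\E(\mathbf 1_{U_i}\mid\mathcal Z_{k-1})$ must be bounded below on $\pi_k^{-1}((\pi_kU_i)^\circ)$. I would handle this by first passing to an almost one-to-one extension (as in \cite{Ye_Yu} and \cite{Huang_Shao_Ye}) in which $\pi_k$ is open. After that modification, $\pi_k U_i$ is open and the disintegration of $\mu$ over $X_k$ has full-support fibre measures. Since piecewise syndeticity and return-time sets behave well under almost one-to-one extensions, this suffices. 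Finally, the identification of $k$ comes from that of the seminorm: complexity $k-1$ means $U^{k}$ controls the average, which yields the maximal $k$-step pronilfactor.
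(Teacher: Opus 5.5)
\textbf{Verdict: the proposal has genuine gaps.} The most serious one is the transfer step, which you correctly identify as the main obstacle but then settle with a claim that is false in general. For a minimal $X$ and an ergodic invariant $\mu$, the measurable factor $\mathcal Z_{k-1}$ of $(X,\mu,T)$ need not be realised by $X_k$. Nor need the fibre measures of $\mu$ over $X_k$ have full support, even when $\pi_k$ is open.

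A concrete counterexample is Furstenberg's minimal, non-uniquely-ergodic skew product $(x,y)\mapsto(x+\alpha,y+\phi(x))$ on $\mathbb T^2$, with $\phi=\psi(\cdot+\alpha)-\psi$ for a measurable, discontinuous $\psi$. It is distal, so $\pi_k$ is already open. Every ergodic measure is carried by a graph $\{y=\psi(x)+c\}$, so each is measurably a circle rotation and $\mathcal Z_1$ is the whole system. Yet $X$ is not a pronilsystem. Hence the fibres of $\pi_k$ are not singletons, while the fibre measures over $X_k$ are point masses. No almost one-to-one extension repairs this, since the fibre measures of any lift project onto these point masses. The Ye--Yu and Huang--Shao--Ye arguments do not supply the measure you need.

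The paper avoids measure theory on a general $X$ altogether. Theorem \ref{GKMLRR} and Lemma \ref{Brown}, together with the $O$-diagram bookkeeping of Appendix \ref{App B}, reduce everything to the uniquely ergodic distal pronilsystem $X_\infty$. There $Z_k=X_k$, $\pi_k$ is open, and the fibre measures have full support by \cite[Proposition 5.5]{Glasner_Huang_Shao_Ye_regionally_arithmetic_prog_nil:2020}. This gives $\supp(\lambda^{d+1}_k)=R^{(d+1)}_{\pi_k}$ (Proposition \ref{support trick}), which is what makes $\int\prod_i\E(\mathbf 1_{U_{j,i}}\mid\mathcal Z_k)\,dm_k>0$.

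Your seminorm step is also off by one, and the index shift is put in by hand. Attributing it to ``measurable versus topological'' misplaces its source, since that discrepancy vanishes on $X_\infty$. A weighted average $\frac1N\sum_n g(n)\int\prod_i T^{p_i(n)}f_i\,d\mu$ with an arbitrary bounded weight is not controlled by the seminorm $U^{k}$ that controls the $L^2$ average. For example, take $p=(0,n,2n)$ on $(x,y)\mapsto(x+\alpha,y+x)$, with $f_1=f_3=e(y)$ and $f_2=e(-2y)$, where $e(t)=e^{2\pi i t}$. The correlation $\int f_1\cdot T^nf_2\cdot T^{2n}f_3\,d\mu$ equals $e(n^2\alpha)$, although $\E(f_i\mid\mathcal Z_1)=0$. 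Cauchy--Schwarz and the tensor-square trick only give control by $U^{k+1}$, i.e.\ by $\mathcal Z_k$; this is Theorem \ref{k+1 factor is characteristic for ergodic averages}. The loss is unavoidable, since by \cite[Proposition 5.1]{GKLMRR} the factor $X_1$ fails for $(0,n,2n)$.

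Finally, the combinatorial mechanism is missing. Positivity of each correlation for $n\in A$ does not give positivity of an average along an arbitrary piecewise syndetic $D\subset A$, and your symbolic weight is not tied to the dynamics of $X$. The paper, following \cite{GKLMRR}, instead applies Lemma \ref{A-B pws}: it suffices that $\bigcap_{a\in F}(B-a)$ be syndetic for every finite $F=\{a_1,\dots,a_m\}\subset A$. This requires a single $n_0$ that works simultaneously for the $m$ normalised tuples $p_i(n+a_j)-p_i(a_j)$, each with its own open sets. That is Theorem \ref{k factor is characteristic}, proved via a product of $m$ correlations and Bergelson--Leibman on $Z_k^m$, and it is exactly where the step is lost. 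The argument is then completed by shifts $r_j$ obtained from minimality, added linear terms $\ell_jx$ that make the concatenated tuple essentially distinct, and Bergelson--McCutcheon's IP Szemer\'edi theorem for syndeticity.
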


The last conclusion of the statement of Theorem \ref{Main theorem}, specifying the optimal 
integer $k$, is explained in the end of Section \ref{final proof}.

In order to prove Theorem \ref{Main theorem}, we also 
establish the 
following result which may be of independent interest. 

\begin{thmx}\label{k factor is characteristic intro}
Let $d,s\in \N$. Then, there exists $k=k(d,s)\in \N$ with the following 
property. For any minimal infinite-step pronilsystem, 
$(X_{\infty},T)$, any $m\in \N$ and tuples 
$p_j=(p_{j,1},\ldots,p_{j,d})\in \Z[x]^{d}$, $j=1,\ldots,m$ 
of 
essentially distinct 
polynomials vanishing at $0$ of degree at most $s$, if $U_{j,i}\subset X_{\infty}$, for $i=0,1,\ldots,d$ and 
$j=1,\ldots,m$ are open sets such that 
$$\pi_k(U_{j,0}) \cap \cdots \cap \pi_k(U_{j,d}) \neq \emptyset, $$
then there is $n\in \N$ such that 
\begin{equation}\label{same return time}
U_{j,0} \cap \bigcap_{i=1}^d T^{-p_{j,i}(n)}U_{j,i} \neq \emptyset,    
\end{equation}
for each $j=1,\ldots,m$, where $\pi_k:X_{\infty}\to X_k$ denotes the factor map to the maximal $k$-step pronilfactor.
\end{thmx}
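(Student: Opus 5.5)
The plan is to argue ergodic-theoretically on the pronilsystem $X_\infty$, which is uniquely ergodic with Haar measure $\mu$ giving positive mass to every nonempty open set. The simultaneous-return condition \eqref{same return time} will follow once we show that
\[
\liminf_{N\to\infty}\frac1N\sum_{n=1}^N \prod_{j=1}^m \int f_{j,0}\cdot \prod_{i=1}^d T^{p_{j,i}(n)} f_{j,i}\,d\mu \;>\;0
\]
for suitable continuous $0\le f_{j,i}\le \mathds{1}_{U_{j,i}}$. Indeed, a positive average forces some $n$ at which every factor of the product is positive. The product of integrals is an integral over the product system $(X_\infty^m,\mu^{\otimes m}, T\times\cdots\times T)$. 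We therefore study the single polynomial average on the product space with functions $F_i=\bigotimes_j f_{j,i}$. The complication is that the polynomials differ across coordinates, so the natural object is an average of the form $\frac1N\sum_n\int \bigotimes_j\big(f_{j,0}\prod_i T^{p_{j,i}(n)}f_{j,i}\big)\,d\mu^{\otimes m}$. This is a polynomial average for the commuting transformations $T_j$, where $T_j$ acts on the $j$-th coordinate.

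\emph{Step 1 (seminorm control).} Using PET induction (van der Corput) in the form of Host--Kra/Leibman, applied coordinatewise, we show the following. There is $k=k(d,s)$, depending only on $d$ and the degree bound $s$ and not on $m$, such that if some $f_{j,i}$ with $i\ge1$ has $\nnorm{f_{j,i}}_{k}=0$ (the Host--Kra seminorm on $X_\infty$), then the product of integrals tends to zero. Independence from $m$ should come from the product structure. Bounding the other factors by $\|f\|_\infty$ via Cauchy--Schwarz reduces the claim to a single $j$, and that case is exactly the usual essentially-distinct polynomial average. Consequently $X_k$ is characteristic, and we may replace each $f_{j,i}$ by $\E(f_{j,i}\mid X_k)$.

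\emph{Step 2 (positivity on $X_k$).} Now work on the $k$-step pronilsystem $X_k$, which is an inverse limit of minimal $k$-step nilsystems. Choose a point $y$ in $\bigcap_i \pi_k(U_{j,i})$; the hypothesis says this intersection is nonempty. Since $\pi_k$ is open on pronilsystems, the sets $V_{j,i}=\pi_k(U_{j,i})$ are open, and $\E(\mathds{1}_{U_{j,i}}\mid X_k)$ is bounded below by a positive constant on a smaller open set $W_{j,i}\ni y$. The latter holds because the disintegration over the fibers of a nilsystem extension is continuous and fiber measures have full support. The average then dominates $c\cdot\frac1N\sum_n\prod_j\mu_k(W_{j,0}\cap\bigcap_i T^{-p_{j,i}(n)}W_{j,i})$. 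For this we use Leibman's equidistribution of polynomial orbits on nilmanifolds. Since $p_{j,i}(0)=0$, the orbit $n\mapsto (T^{p_{j,i}(n)}x)_{j,i}$ in the appropriate nilmanifold passes through the diagonal point at $n=0$. Its closure is a sub-nilmanifold containing the diagonal point $(y,\dots,y)$, and the orbit equidistributes along a subsequence of residue classes (such as $n\in r\Z$) on a set of positive measure. Integrating over $x$ near $y$ then gives a positive lower bound on the liminf.

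The main obstacle is Step 2. We must make sure that the positivity near the diagonal survives the reduction from a general pronilsystem to a finite-step nilsystem quotient, that it holds uniformly in $x$, and that it holds simultaneously across all $j$. My first attempt will be to pass to a single nilsystem factor of $X_k$ in which the $W_{j,i}$ are pullbacks of open sets. I will then apply Leibman's theorem to the joint polynomial sequence $g(n)=(a^{p_{j,i}(n)})_{j,i}$ in $G^{m(d+1)}$, which has $g(0)=e$. Finally, I will use the fact that the orbit closure of $\Gamma$ under $g$ is a finite union of connected sub-nilmanifolds, one of which contains the identity coset; this gives the needed positive density of good $n$.
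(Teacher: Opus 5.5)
Your Step 1 has a genuine gap, and it sits exactly where the substance of the theorem lies.

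Write $c_j(n)=\int f_{j,0}\prod_{i=1}^d T^{p_{j,i}(n)}f_{j,i}\,d\mu$. Bounding the factors with $j\neq j_0$ (by sup norms, or by Cauchy--Schwarz in $n$) leaves you needing $\frac1N\sum_{n=1}^N|c_{j_0}(n)|\to 0$, or the same with $|c_{j_0}(n)|^2$. This is \emph{not} the usual essentially distinct polynomial average. Theorem \ref{Leibman} only yields $\frac1N\sum_n c_{j_0}(n)\to 0$, and the absolute values cannot be moved inside for free.

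At the usual level the claim is actually false. Take $X=\mathbb{T}^2$ and $T(x,y)=(x+\alpha,y+x)$ with $\alpha\notin\mathbb{Q}$. Write $e(t)=e^{2\pi i t}$, and take $p=(n,2n)$, $f_0=f_2=e(y)$ and $f_1=e(-2y)$. Then $\nnorm{f_1}_{U^2}=0$, which is the level that controls $\frac1N\sum_n T^nf_1\,T^{2n}f_2$ in $L^2$. Yet $\int f_0\,T^nf_1\,T^{2n}f_2\,d\mu=e(n^2\alpha)$. So for $m=2$, with the second family equal to the complex conjugates of the first, $c_1(n)c_2(n)=1$ for every $n$. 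No ``coordinatewise'' PET argument removes this obstruction.

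Your concluding sentence, that $X_k$ is characteristic, is in fact true when $k$ is Leibman's integer. The reason is that $\E(f\mid\mathcal Z_k)=0$ gives $\nnorm{f}_{U^{k+1}}=0$, not merely $\nnorm{f}_{U^k}=0$. But the implication you derive it from is false.

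The missing ingredient is the paper's Theorem \ref{k+1 factor is characteristic for ergodic averages}:
\begin{enumerate}[(i)]
\item $|c_j(n)|^2$ is a correlation of the functions $f_{j,i}\otimes\overline{f_{j,i}}$ for $T\times T$ on $(X\times X,\mu\times\mu)$.
\item This system is not ergodic, so one passes to the ergodic decomposition of $\mu\times\mu$.
\item One uses that $\nnorm{f}_{U^{k+1}}=0$ forces $\nnorm{f\otimes\bar f}_{U^{k}}=0$ on almost every ergodic component (Lemmas \ref{uniformity in tensor product} and \ref{uniformity for ergodic components}).
\item One then applies Leibman's theorem componentwise.
\end{enumerate}
This costs exactly one step, which is why the factor here is one step above the $m=1$ (Ye--Yu) case. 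Independence from $m$ then comes from the telescoping identity \eqref{eq:18}, applied to each $j$ separately.

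With that in place, your Step 2 is a workable but heavier alternative to what the paper does. The paper uses Proposition \ref{support trick} to get a positive-measure set $A_j\subset Z_k$ on which every $\E(\mathrm{1}_{U_{j,i}}\mid\mathcal Z_k)>a_j$. It then concludes at once by Bergelson--Leibman for the commuting maps $T_j$ on $Z_k^m$ (Corollary \ref{Bergelson Leibman tensor product}). That route needs no continuous disintegration (which you would still have to justify for $X_\infty\to X_k$), no passage to a finite-level nilsystem, and no equidistribution. Your open sets $W_j$, one around a point $y_j$ for each $j$ rather than a single $y$, would feed directly into that corollary. So the ``main obstacle'' you flag is not where the difficulty lies.
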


Finally, we would like to remark that the aforementioned 
results of \cite{Huang_Shao_Ye} and Theorem \ref{Qiu} were 
recently extended to the product of finitely many minimal 
systems in \cite{QiuXuYeYu} and \cite{WuYu}, respectively (in 
the sense of simultaneous recurrence).
It turns out that Theorems \ref{GKMLRR} and \ref{Main theorem} can also be extended in the same direction; we merely present an overview of the argument as the details are beyond the scope of this article. It can be shown that \cite[Theorem 1.1]{WuYu} -- using the notation therein -- is equivalent to the statement that if $A:=\bigcap_{i=1}^k R_p ((\pi_i V_1^{(i)})^{\circ} , \ldots , (\pi_i V_d^{(i)})^{\circ} ) \neq \emptyset $, then also 
$B:= \bigcap_{i=1}^k R_p (V_1^{(i)} , \ldots , V_d^{(i)}) \neq \emptyset $, 
where $\pi_i: X_i \to Y_i$ are projections to any factor $Y_i$ that contains the infinite-step pronilfactor of $X_i$ and $p=(p_1,\ldots,p_d)\in \Z[x]^d$ is an essentially distinct tuple of polynomials vanishing at $0$. In this context, the analogue of Theorem \ref{GKMLRR} would be that $A\setminus B$ is not piecewise syndetic. Note, however, that  
$(A\setminus B) \subset \bigcup_{i=1}^k (A_i\setminus B_i),$
where $A_i:=R_p ((\pi_i V_1^{(i)})^{\circ} , \ldots , (\pi_i V_d^{(i)})^{\circ} )$ and 
$B_i=R_p (V_1^{(i)} , \ldots , V_d^{(i)})$. 
Then, each of the sets $A_i\setminus B_i$ is not piecewise syndetic by Theorem \ref{GKMLRR}, and thus the same holds for $A \setminus B$ by Brown's lemma (see Lemma \ref{Brown} below). 
The analogue of Theorem \ref{Main theorem} in this direction (with appropriate finite-step pronilfactors being characteristic), can be deduced using \cite[Theorem 3.4]{WuYu} and the main results of the present paper. It would be interesting to see whether these results -- starting from those of \cite{QiuXuYeYu, Ye_Yu} -- extend to (countably) infinite products of minimal transformations as well. 

\subsection*{Acknowledgments}
The author thanks Joel Moreira and Andreas 
Koutsogiannis for comments on an earlier draft of the paper. 
The author is also grateful to Professor Xiangdong Ye for 
reading the arguments of Appendix \ref{App B} and providing some 
helpful references. 

\section{Preliminaries}

In this section, we collect some definitions and results from 
combinatorics, topological dynamics and ergodic theory, that will 
be used in the proof of the main result.  

\subsection{Combinatorics}

We denote by $\Z$ and $\N$ the set of integers and positive 
integers, respectively. For $A\subset \Z$ and $m\in \Z$ we let 
$A-m=\{n\in \Z: n+m\in A\}$. 

The following notions of largeness 
for subsets of $\Z$ are very important and appear frequently in 
the intersection of Ramsey theory and topological dynamics. A set 
$A\subset \Z$ is 
\begin{itemize}
    \item \textit{syndetic} if there exists $k\in \N$ such that 
    $\bigcup_{j=1}^k A-j=\Z$
    \item \textit{thick} if for all $N\in \N$, there exists $m\in \Z$ such that $\{m,m+1,\ldots,m+N\} \subset \Z$
    \item \textit{piecewise syndetic} if there exists $k\in \N$ such that $\bigcup_{j=1}^{k} A-j$ is thick.
\end{itemize}

It is well-known and easy to deduce from the definitions that a 
set is thick if and only if it intersects every syndetic set, or 
equivalently, if its complement is not syndetic. The roles of 
thick and syndetic sets in the previous sentence can be 
interchanged. Moreover, a set is piecewise syndetic if and only if 
it is the intersection of a syndetic and a thick set. Note also 
that shifts of syndetic, thick or piecewise syndetic sets are 
syndetic, thick or piecewise syndetic, respectively. 

We will use the fact that piecewise syndetic sets possess the 
Ramsey property, often referred to as Brown's lemma (see, e.g.  \cite[Theorem 4.4]{Hindman_Strauss98})

\begin{lemma}\label{Brown}
Let $A\subset \Z$ be piecewise syndetic. If 
$A=A_1 \cup \cdots \cup A_r$, then one of the sets $A_i$ is also 
piecewise syndetic. 
\end{lemma}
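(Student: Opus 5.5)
The plan is to prove the lemma by a topological-dynamics characterization of piecewise syndeticity in the shift space; the argument is then a lifting argument along a factor map. A set $B\subset\Z$ is identified with its indicator $1_B\in\{0,1\}^{\Z}$, and $\sigma$ denotes the left shift.

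\emph{Step 1 (characterization).} I will show that $B$ is piecewise syndetic if and only if the orbit closure $\overline{\{\sigma^n 1_B\}}$ contains a point $y$ with $y(0)=1$ that lies in a minimal subsystem.

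For ``$\Rightarrow$'', suppose $\bigcup_{j=1}^k (B-j)$ is thick. Choose intervals $I_N$ of length $N$ contained in this union. Let $z$ be a limit point of the shifts of $1_B$ that center $I_N$ at the origin. Then $\operatorname{supp} z$ has gaps at most $k$, and this property passes to every point of $\overline{\{\sigma^n z\}}$. Take a minimal subset $Y$ of this orbit closure. Every point of $Y$ has nonempty support, so after a shift we obtain $y\in Y$ with $y(0)=1$.

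For ``$\Leftarrow$'', let $y$ be in a minimal set $Y$ with $y(0)=1$. By minimality, the return-time set of every point of $Y$ to the cylinder $[1]=\{w: w(0)=1\}$ is syndetic with a uniform bound $k$; this uses compactness applied to $Y\subset\bigcup_{j\le k}\sigma^{-j}[1]$. Since $y$ is approximated on arbitrarily long windows $[-N,N]$ by shifts $\sigma^{m_N}1_B$, the union $\bigcup_{j\le k}(B-j)$ contains the intervals $[m_N-N+k,\,m_N+N-k]$. Hence it is thick.

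\emph{Step 2 (lifting).} Let $x=(1_{A_1},\ldots,1_{A_r})\in(\{0,1\}^r)^{\Z}$, and let $\phi:(\{0,1\}^r)^{\Z}\to\{0,1\}^{\Z}$ be the coordinatewise ``or''. Then $\phi$ commutes with the shift and satisfies $\phi(x)=1_A$, so $\phi$ maps $X:=\overline{\{\sigma^n x\}}$ onto $\overline{\{\sigma^n 1_A\}}$.

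By Step 1, there is a minimal $Y\subset\overline{\{\sigma^n1_A\}}$ and a point $y\in Y$ with $y(0)=1$. The set $\phi^{-1}(Y)\cap X$ is a nonempty closed invariant set, so it contains a minimal set $M$. Its image $\phi(M)$ is a closed invariant subset of $Y$, hence equals $Y$ by minimality. Pick $x'\in M$ with $\phi(x')=y$. Since $y(0)=1$, some coordinate $i$ satisfies $x'_i(0)=1$.

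The $i$-th coordinate projection is also a factor map. It sends $M$ to a minimal set contained in $\overline{\{\sigma^n 1_{A_i}\}}$, and this minimal set contains $x'_i$. By the converse direction of Step 1, $A_i$ is piecewise syndetic.

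The only real care is needed in Step 1. The main points to check are that the ``bounded gaps'' property survives limits, which gives nonempty support in the minimal set, and that minimality yields a uniform syndeticity constant. Both are routine compactness arguments, and I expect no genuine obstacle. Alternatively, one could cite the Stone–Čech characterization: $B$ is piecewise syndetic if and only if $\overline{B}$ meets the smallest ideal $K(\beta\Z)$. The lemma is then immediate from $\overline{A}=\bigcup_i\overline{A_i}$.
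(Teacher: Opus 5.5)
Your proof is correct, but it takes a different route from the paper. The paper gives no argument of its own and simply cites \cite{Hindman_Strauss98}. There Brown's lemma comes from the algebraic characterization ``$B$ is piecewise syndetic if and only if $\overline{B}\cap K(\beta\Z)\neq\emptyset$'', combined with $\overline{A}=\overline{A_1}\cup\cdots\cup\overline{A_r}$ in $\beta\Z$. This is exactly the alternative you mention at the end.

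Your main argument is instead the Furstenberg-style dynamical one. You read off piecewise syndeticity of $B$ from a minimal subsystem of the orbit closure of $1_B$ in $\{0,1\}^{\Z}$ that meets the cylinder $[1]$. You then get partition regularity by lifting a minimal set through the equivariant ``or'' map $(\{0,1\}^r)^{\Z}\to\{0,1\}^{\Z}$ and projecting onto a coordinate that carries the $1$ at the origin.

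The details hold up:
\begin{itemize}
\item The bounded-gap property is closed and shift-invariant, so it passes to the limit point and to its orbit closure.
\item In the converse direction, compactness a priori only gives return times to $[1]$ in some window $[-L,L]$. You can convert this to $[1,2L+1]$ using $\sigma$-invariance of $Y$, or use density of forward orbits in a minimal set. Your intervals $[m_N-N+k,m_N+N-k]$ are then indeed contained in $\bigcup_{j=1}^k(B-j)$.
\item In Step 2, the facts that $\phi(M)=Y$ and that the coordinate image of $M$ is minimal are standard.
\end{itemize}

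As for what each approach buys: your argument is self-contained. It needs only compactness of the shift space and the existence of minimal subsets, and it fits the dynamical language used in the rest of the paper. The $\beta\Z$ argument is a two-line deduction once the characterization is available. However, it outsources all the work to the structure theory of the smallest ideal of $\beta\Z$, which is essentially your minimal-set argument in algebraic form.
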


Next is a simplified version of \cite[Lemma 2.1]{GKLMRR} that will be
utilised in the paper. 

\begin{lemma}\label{A-B pws}
Let $B\subset A\subset \Z$. If the set $\bigcap_{n\in F}B-f$ is syndetic for any finite $F\subset A$, then $A\setminus B$ is not piecewise syndetic. 
\end{lemma}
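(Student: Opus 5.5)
We argue by contradiction in the Stone--\v{C}ech compactification $\beta\Z$, using the standard facts about its minimal ideal $K(\beta\Z)$. Write $C=A\setminus B$ and suppose $C$ is piecewise syndetic. For $q\in\beta\Z$ and $E\subset\Z$ we use $E\in s+r \iff \{x: E-x\in r\}\in s$. Recall that $E$ is piecewise syndetic iff $\overline{E}\cap K(\beta\Z)\neq\emptyset$. Also, if $E$ is syndetic then $\Z=\bigcup_{j=1}^k (E-j)$, so $\overline{E}$ meets every left ideal $\beta\Z+q$.

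\emph{Step 1: choosing $r$.} Consider the family $\mathcal F=\{B-f: f\in A\}$. By hypothesis every finite intersection $\bigcap_{f\in F}(B-f)$, $F\subset A$ finite, is syndetic. Fix a minimal left ideal $L$. By the remark above, the closed sets $\overline{B-f}\cap L$ have the finite intersection property. Compactness then gives $r\in L$ with $B-f\in r$ for every $f\in A$. Equivalently,
\[
\{x\in\Z: B-x\in r\}\supseteq A .
\]

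\emph{Step 2: choosing $s$.} Since $C$ is piecewise syndetic, some minimal ultrafilter contains $C$. Using the structure of $K(\beta\Z)$ (every minimal right ideal meets every minimal left ideal), we can find such an ultrafilter $q$ lying in the same minimal left ideal $L=\beta\Z+r$. Then $q=s+r$ for some $s\in\beta\Z$, and $C\in s+r$ means $G:=\{x: C-x\in r\}\in s$. If we knew in addition that $A\in s$, we would have $G\cap A\in s$, so $G\cap A\neq\emptyset$. Any $x\in G\cap A$ gives both $C-x\in r$ and $B-x\in r$, using Step 1. But $C\cap B=\emptyset$, so $(C-x)\cap(B-x)=\emptyset$, which is a contradiction.

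\emph{Main obstacle.} The crux is arranging that the ``multiplier'' $s$ contains $A$. I would first try to choose $q$ more carefully: take $q$ a minimal idempotent with $C\in q$ (or in $\overline{C}\cap L$), and $r$ in the group $q+\beta\Z+q$, so that $q=q+r$-type identities let us take $s=q\ni C\subset A$. This requires checking that $r$ from Step 1 can be chosen inside that group. To do so, I would intersect the closed set $\bigcap_{f\in A}\overline{B-f}$ with $q+\beta\Z$ and use that syndetic sets also meet every minimal right ideal's translates.

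If this ultrafilter bookkeeping fails, I would fall back to the elementary route along the lines of \cite[Lemma 2.1]{GKLMRR}. Thickness of $\bigcup_{j\le k}(C-j)$ gives long intervals in which $C$ has gaps at most $k$. Take $F\subset C\subset A$ to be a long block of such elements. The syndetic set $\bigcap_{f\in F}(B-f)$ must then hit $C-f$ for some $f\in F$, because $C-F$ covers long intervals. Any such hit produces a point of $B\cap C$, a contradiction. The delicate point there is that the syndeticity constant depends on $F$, so one has to order the choices of the interval and of $F$ correctly.
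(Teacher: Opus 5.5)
\textbf{There is a genuine gap in both routes.}

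\textbf{The ultrafilter route.} Your Step 1 is fine: syndetic sets meet every closed left ideal, and compactness then gives $r$. The argument breaks down exactly where you flag it, and the difficulty there is not bookkeeping. Once $r$ satisfies $B-f\in r$ for all $f\in A$, every $x\in A$ has $C-x\notin r$, because $(C-x)\cap(B-x)=\emptyset$. So $G=\{x: C-x\in r\}$ is disjoint from $A$ for \emph{every} such $r$. Establishing $A\in s$ is therefore the entire content of the lemma, and nothing in your setup forces it.

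Your proposed repair does not supply it. A piecewise syndetic set need not belong to any idempotent: members of idempotents are IP sets, and $2\Z+1$ is syndetic but not IP. Moreover, $q+r=q$ with $r$ in the group $q+\beta\Z+q$ forces $r=q$. You would then need a minimal idempotent containing $C$ and all the sets $B-f$, $f\in A$, which Step 1 does not give.

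Separately, your claim in Step 2 that $\overline{C}$ meets the pre-chosen minimal left ideal $L$ is false in general. If $C$ and $\Z\setminus C$ are both thick, then $\overline{\Z\setminus C}$ contains a minimal left ideal disjoint from $\overline{C}$; the fact that minimal left and right ideals intersect does not help. This point is repairable by choosing $L$ to meet $\overline{C}$ first.

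\textbf{The fallback route.} It fails at a concrete step. For $F\subseteq C$, the set $C-F$ need not contain long intervals: if $C\subseteq 2\Z$, then $C-F\subseteq 2\Z$. So you cannot force some $n\in\bigcap_{f\in F}(B-f)$ to satisfy $n+f\in C$, and a contradiction of the form $B\cap C\neq\emptyset$ is not available.

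\textbf{The missing idea.} It is the maximality trick used in the paper (following GKLMRR).
\begin{enumerate}
\item Fix $\ell$ with $\bigcup_{i=1}^{\ell}(C-i)$ thick.
\item Choose a window $I$ of $\ell$ consecutive integers for which $F=A\cap I$ has maximal cardinality; this exists since $|A\cap I|\le \ell$.
\item $S=\bigcap_{f\in F}(B-f)$ is syndetic and $\bigcup_{i\in I}(C-i)$ is thick, so some $n\in S$ has $n+i\in C$ for some $i\in I$.
\item Now $n+F\subseteq B\subseteq A$ and $n+i\in A\setminus B$, so $|A\cap(n+I)|\ge |F|+1$, contradicting maximality.
\end{enumerate}
The contradiction comes from maximality, not from $B\cap C=\emptyset$. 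The syndeticity constant of $S$ plays no role, which dissolves your ordering concern.

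The same idea also rescues your ultrafilter route. Set $D=\bigcup_{i=1}^{\ell}(C-i)$ and take $r$ as in Step 1, but inside the closed left ideal $\bigcap_{y\in\Z}\overline{D-y}$. Define $A^r=\{x: A-x\in r\}$, and $B^r$, $C^r$ likewise. Then $C^r$ meets every window of length $\ell$. On the other hand, $A\subseteq B^r\subseteq A^r$ and $|A^r\cap I|=|A\cap(n+I)|\le|F|$ for suitable $n$. Maximality therefore gives $A^r\cap I=A\cap I\subseteq B^r$, hence $C^r\cap I=(A^r\setminus B^r)\cap I=\emptyset$, a contradiction.
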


\begin{proof}
Towards a contradiction, suppose there exists $\ell \in \N$ such 
that $\bigcup_{i=1}^{\ell}(A\setminus B)-i$ is thick. Let 
$F\subset A$ be a subset of $A$ contained in an interval $I$ 
of length $\ell$, which has maximal cardinality among all 
subsets of $A$ with this property. Namely, there is no subset 
$C\subset A$ contained in an interval of length $\ell$ for 
which $|C|>|F|$. By assumption, $S=\bigcap_{n\in F}B-f$ is 
syndetic, and $\bigcup_{i\in I}(A\setminus B)-i$ is thick as 
the shift of a thick set. 
The rest of the argument proceeds exactly as in 
\cite{GKLMRR}.    
\end{proof}

A \Folner\ sequence $\Phi=(\Phi_N)_{N\in \N}$ in $\Z$ (or $\N$, analogously)  
is a sequence of nonempty finite subsets of $\Z$ which are 
asymptotically invariant under any shift, that is, 
$$\lim_{N\to \infty} \frac{|\Phi_N \cap (\Phi_N+1)|}{|\Phi_N|}=1.$$
We say that a set $A\subset \Z$ has positive upper Banach density if and only if there is a \Folner\ sequence $\Phi=(\Phi_N)_{N\in \N}$ such that 
$$ \lim_{N\to \infty} \frac{|A\cap \Phi_N|}{|\Phi_N|}>0.$$

We conclude this subsection with a simple telescoping trick which we will use 
to replace Ces\`aro averages of products of sequences. 
Namely, for general bounded sequences $a_1,\ldots,a_d,b_1,\ldots,b_d:\Z\to \R$, it 
holds that 
\begin{equation}\label{eq:18}
a_1(n)\cdots a_d(n)- b_1(n)\cdots b_d(n)=\sum_{j=1}^d a_1(n)\cdots a_{j-1}(n)(a_j(n)-b_j(n))b_{j+1}(n)\cdots b_d(n),
\end{equation} 
where $a_0, b_{d+1}$ are both the constant sequence $1$. 

\subsection{Topological dynamics}\label{prel dynamics}

By a \textit{topological dynamical system}, or \textit{system} for short, we mean a pair $(X,T)$, 
where $X$ is a compact metric space and $T: X\to X$ a 
homeomorphism. 
Throughout, we only consider invertible transformations, which is 
essential because the regionally proximal relations used to 
develop the theory of maximal pronilfactors do not seem to have 
been defined in the literature for non-invertible transformations (or $\N$-actions). 

A system $(X,T)$ is minimal if every point $x$ has a dense orbit, 
i.e., $\{T^nx: n\in \Z\}$ is dense in $X$. 

A \textit{factor map} between two systems $(X,T)$ and $(Y,S)$ is a 
continuous surjection $\pi:X\to Y$ satisfying 
$\pi \circ T=S\circ \pi$, meaning that $\pi(T(x))=S(\pi(x))$, 
for all $x\in X$. In this case, we call $(Y,S)$ a \textit{factor} 
of $(X,T)$ or the latter an \textit{extension} of the former and 
we sometimes use the same letter for the transformations $T$ and 
$S$, writing $(Y,T)$ instead of $(Y,S)$. If $(Y_1,T)$ and $(Y_2,T)$ are two factors of $(X,T)$ with corresponding factor maps $\pi_1:X\to Y_1$ and $\pi_2:X\to Y_2$ and there is another factor map $\pi:Y_1\to Y_2$ such that $\pi_2=\pi \circ \pi_1$, we say that $(Y_1,T)$ \textit{contains} $(Y_2,T)$. 

When $\phi:X\to Y$ is a map between compact metric spaces, and $k\in \N$, we write $\phi^{(k)}$ for the map $\phi \times \cdots \times \phi: X^k \to Y^k$, the diagonal of $X^k$ is $\Delta_k(X)=\{x^{(k)}=(x,\ldots,x)\in X^k: x\in X\}$ and when $(X,T)$ is a system, we write $(X^k,T^k)$ for the $k$-fold product system $(X\times \cdots \times X, T\times \cdots \times T)$. 

If $(X,T)$ is a system, a pair $(x,y)\in X^2$ is called \textit{proximal} if $\inf_{n\in \Z}d(T^nx,T^ny)=0$, where $d$ is 
a compatible metric for $X$. The pair is otherwise \textit{distal} and the system $(X,T)$ is called \textit{a distal system} if all pairs of distinct points are distal. We denote by $\textbf{P}(X,T)\subset X^2$ the set of all proximal pairs of $(X,T)$. 

A factor map $\pi:X\to Y$ is almost $1-1$ if there exists a dense $G_{\delta}$ subset $X' \subset X$ such that $\pi^{-1}(\{\pi(x)\})=\{x\}$ for all $x\in X'$. In this case, we say that $(X,T)$ is \textit{an almost $1-1$ extension} of $(Y,T)$. Moreover, the factor map $\pi$ is called \textit{distal} if $\pi(x)=\pi(y)$ for $x\neq y$ implies that $(x,y) \notin \textbf{P}(X,T)$.

It is well-known (see, for example, \cite[Lemma 2.9]{Glasscock_Koutsogiannis_Richter}) 
that a factor map between minimal systems $\pi: X\to Y$ is \textit{semiopen}, meaning that $(\pi(U))^{\circ}$ is nonempty for every nonempty open set $U\subset X$. The factor map is \textit{open} if it maps open sets to open sets. 

Let $(X_j,T_j)_{j\in \N}$ be a sequence of topological dynamical systems endowed with factor maps $\pi_{j,j+1}: X_{j+1}\to X_j$ for each $j\in \N$. An \emph{inverse limit} of this sequence of systems is the unique system $(X,T)$ (up to topological isomorphism) endowed with factor maps $\pi_j:X\to X_j$, $j\in \N$, such that
\begin{enumerate}[(i)]
    \item \label{point inverse 1}  $\pi_{j+1}=\pi_{j,j+1} \circ \pi_j$, for every $j\in \N$ and 
      \item $\bigcup_{i\in \N} \{f \circ \pi_i : f\in C(X_i)\}$ is dense in $C(X)$.
\end{enumerate}
We also remark that the inverse limit satisfies the following \textit{universal property}: If $(Y, T)$ is a system and for $i\in \N$, $p_i:Y \to X_i$ is a factor map such that such that $p_i=\pi_{i,j} \circ p_j$, for $i\leq j$, then there exists a unique factor map $p:Y\to X$, such
that $p_i = \pi_i \circ p$, for every $i\in \N$.

\subsection{Ergodic theory}

A \textit{measure preserving system} $(X,\mu,T)$ is a triple where $(X,T)$ is a topological dynamical system and $\mu$ is a $T$-invariant Borel probability measure, namely $T^{*}\mu=\mu$. The system $(X,\mu,T)$ is \textit{ergodic} if $T^{-1}A=A$ implies that $\mu(A)\in \{0,1\}$ for any Borel set $A$. The system $(X,T)$ is \textit{uniquely ergodic} if there exists only one $T$-invariant measure $\mu$, which makes the system $(X,\mu,T)$ automatically ergodic. 

If $(X,\mu,T)$ is measure preserving, the \textit{support of $\mu$} is 
defined as the smallest full measure subset of $X$, or 
equivalently, as 
$$\supp(\mu)=\{x\in X: \text{for each open neighborhood U of}\ x,\ \mu(U)>0\}.$$

Let $(X, \mu,T)$ and $(Y,\nu,S)$ be measure preserving systems. We say that $(Y, \nu,S)$ (or simply $Y$) is a \emph{factor} of $(X, \mu,T)$ (or simply $X$) if there exists a measurable map $\pi \colon X \to Y$, called a \emph{factor map}, such that $\pi^{*}\mu = \nu$ and $\pi \circ T = S \circ \pi$, up to null-sets. Again, we use the same letter for the transformation of a factor and its extension, that is, we write $(Y,\nu,T)$. Given a factor map $\pi: X\to Y$ and a function $f\in L^2(\mu)$, 
we consider the conditional expectation 
$\mathbb{E}(f | \pi^{-1}\mathcal{B}(Y))$, where $\mathcal{B}(Y)$ is the Borel $\sigma$-algebra on $Y$. We write $\mathbb{E}(f | \mathcal{B}(Y))$ or $\mathbb{E}(f | Y)$ for the $\mathcal{B}(Y)$-measurable function that satisfies $\mathbb{E}(f | \mathcal{B}(Y))\circ \pi = \mathbb{E}(f | \pi^{-1}\mathcal{B}(Y))$.

Let $(X_j,\mu_j,T_j)_{j\in \N}$ be a sequence of measure preserving systems endowed with factor maps $\pi_{j,j+1}: X_{j+1}\to X_j$ for each $j\in \N$. An \emph{inverse limit} of this sequence of systems is the unique system $(X,\mu,T)$ (up to isomorphism) endowed with the factor maps $\pi_j:X\to X_j$, $j\in \N$ such that
\begin{enumerate}[(i)]
    \item \label{point inverse 1}  $\pi_{j+1}=\pi_{j,j+1} \circ \pi_j$, for every $j\in \N$ and 
    \item For $1\leq p <\infty$, $\bigcup_{i\in \N} \{f \circ \pi_i : f\in L^p(\mu_i)\}$ is dense in $L^p(\mu)$.
\end{enumerate}

Ergodic systems are fundamental as building blocks of all systems 
in the sense of the ergodic decomposition theorem (see, for 
example, \cite[Theorem 6.2]{Einsiedler_Ward11}). 

\begin{theorem}[\textbf{Ergodic decomposition}]\label{ergodic decomposition}
Let $(X,\mu,T)$ be a measure preserving system and let $M_T(X)$ denote the set of $T$-invariant Borel probability measures on $X$. Then there exist a Borel probability space $(Y,\mathcal{B}(Y),\nu)$ and measures $\mu_y \in M_T(X)$ such that,
\begin{enumerate}[(i)]
\item $y \mapsto \mu_y$ is measurable, that is, $y \mapsto \int_X f\ d\mu_y$ is measurable for every $f\in L^1(\mu)$  
\item $\mu_y$ is ergodic for $\nu$-almost every $y\in Y$, and 
\item $\mu=\int_Y \mu_y\ d\nu(y)$, which means that for every $f\in L^1(\mu)$ we have
$$\int_X f(x)\ d\mu(x)=\int_Y \left( \int_X f(x) \ d\mu_y(x) \right)\ d\nu(y).$$  
\end{enumerate} 
\end{theorem}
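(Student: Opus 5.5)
Since $X$ is a compact metric space, $C(X)$ is separable and $(X,\mathcal{B}(X))$ is a standard Borel space. I would build the decomposition from conditional measures with respect to the $\sigma$-algebra $\mathcal{I}=\{A\in\mathcal{B}(X): T^{-1}A=A\}$ of invariant sets, and take $(Y,\nu)=(X,\mu)$ with $y\mapsto\mu_y$ the resulting disintegration.

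\textbf{Step 1: disintegration.} Fix a countable dense $\mathbb{Q}$-subspace $\mathcal{D}\subset C(X)$ containing the constants. For $f\in\mathcal{D}$ choose versions of $\mathbb{E}(f\mid\mathcal{I})$. Off a single $\mu$-null set, $f\mapsto \mathbb{E}(f\mid\mathcal{I})(x)$ is positive, $\mathbb{Q}$-linear, sends $1$ to $1$, and is bounded by $\|f\|_\infty$, since only countably many identities must hold. It therefore extends to a positive normalized functional on $C(X)$. By the Riesz representation theorem this gives a probability measure $\mu_x$ with $\int f\,d\mu_x=\mathbb{E}(f\mid\mathcal{I})(x)$ a.e. for every $f\in C(X)$. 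On the null set, set $\mu_x=\mu$.

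Several properties then follow from this. Measurability of $x\mapsto\int f\,d\mu_x$, first for continuous $f$ and then for bounded Borel and $L^1$ functions, follows by a monotone class argument. Integrating gives $\mu=\int\mu_x\,d\mu(x)$, which is (iii). Invariance of $\mu_x$ follows because $\mathbb{E}(f\circ T\mid\mathcal{I})=\mathbb{E}(f\mid\mathcal{I})$ a.e. for each $f\in\mathcal{D}$. A countable union of null sets then gives $\int f\circ T\,d\mu_x=\int f\,d\mu_x$ for all $f\in C(X)$, for a.e. $x$, so $\mu_x\in M_T(X)$.

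\textbf{Step 2: ergodicity (main obstacle).} The difficulty is that $\mathcal{I}$ need not be countably generated, so one cannot directly say that $\mu_x$ is concentrated on the ``atom'' of $x$. I would circumvent this by passing to the countably generated sub-$\sigma$-algebra $\mathcal{I}_0\subset\mathcal{I}$ generated by the functions $z\mapsto\int f\,d\mu_z$, $f\in\mathcal{D}$. For $A\in\mathcal{I}_0$ one has $\mu_x(A)=\mathbb{E}(1_A\mid\mathcal{I})(x)=1_A(x)$ a.e. Since $\mathcal{I}_0$ is countably generated, this holds simultaneously for all generators and hence for all $A\in\mathcal{I}_0$, off one null set. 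Consequently, for a.e. $x$ we get
\[
\mu_x\Bigl(\Bigl\{z:\ \int f\,d\mu_z=\int f\,d\mu_x \ \text{for all } f\in\mathcal{D}\Bigr\}\Bigr)=1 .
\]

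\textbf{Step 3: Birkhoff argument.} By Birkhoff's theorem, for each $f\in\mathcal{D}$ the averages $A_Nf=\frac1N\sum_{n<N}f\circ T^n$ converge $\mu$-a.e. to $\mathbb{E}(f\mid\mathcal{I})(z)=\int f\,d\mu_z$. Since $\mu=\int\mu_x\,d\mu$, this convergence also holds $\mu_x$-a.e. for $\mu$-a.e. $x$. Combined with the displayed fact, for a.e. $x$ we get $A_Nf\to\int f\,d\mu_x$ $\mu_x$-a.e., a constant, for every $f\in\mathcal{D}$.

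Now let $g\in L^2(\mu_x)$ be $T$-invariant. Approximate $g$ in $L^2(\mu_x)$ by some $f\in\mathcal{D}$, using density of $C(X)$ in $L^2(\mu_x)$. Then $A_Nf$ tends to a constant, while $\|A_Nf-g\|_{L^2(\mu_x)}=\|A_N(f-g)\|\le\|f-g\|$. Hence $g$ lies within any $\varepsilon$ of the constants, so $g$ is constant and $\mu_x$ is ergodic. This gives (ii), and completes the proof with $Y=X$ and $\nu=\mu$.
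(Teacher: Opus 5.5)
Your proof is correct and is essentially the standard argument the paper relies on. The paper gives no proof of its own but cites Einsiedler--Ward, Theorem 6.2, which likewise realises the $\mu_x$ as conditional measures of $\mu$ on the invariant $\sigma$-algebra, built from a countable dense subset of $C(X)$ via the Riesz representation theorem; it localises them with a countably generated invariant sub-$\sigma$-algebra and derives ergodicity from the pointwise ergodic theorem plus a density argument. Two small steps you leave tacit are routine: extending $\int f\,d\mu_x=\mathbb{E}(f\mid\mathcal{I})(x)$ a.e. to bounded Borel $f$ (used in Step 2, by the same monotone class argument), and redefining $\mu_x=\mu$ on the null set where invariance fails, so that every $\mu_x$ lies in $M_T(X)$.
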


\subsection{Nilsystems and pronilsystems}

A $k$-step nilsystem $(X,T) = (G/\Gamma,T)$ is a dynamical system given by a $k$-step nilpotent Lie group $G$, with $\Gamma$ being a co-compact discrete subgroup, $T \colon X \to X$ denoting the left translation by a fixed element of $G$. If $\mu$ denotes the Haar measure of $X$, then $(X,\mu,T) = (G/\Gamma,\mu,T)$ is a measure preserving system. For the $k$-step nilsystem $(X,\mu,T)$, the properties of ergodicity, minimality and unique ergodicity are equivalent. Moreover, if $x \in X$ is any point, the orbit closure $\overline {\{ T^n x \colon n \in \Z \}} $ is a minimal and uniquely ergodic $k$-step subnilsystem. Also, nilsystems are always distal. An inverse limit (in either the topological or the measure theoretic sense) of $k$-step nilsystems is called a $k$-step pronilsystem. An \emph{infinite-step pronilsystem} is an inverse limit of a sequence of finite-step pronilsystems of increasing and unbounded step.

All the above properties are preserved under inverse limits of $k$-step nilsystems, so they also hold for $k$-step pronilsystems (i.e. distality and the equivalence between minimality, ergodicity and unique ergodicity). For these classical facts and other details we refer the reader to \cite[Chapter 11 and Chapter 13]{Host_Kra_nilpotent_structures_ergodic_theory:2018}.  

Let $(X,\mu,T)$ be an ergodic measure preserving system. Then, for 
every $k \in \N$, $(X, \mu,T)$ has a maximal factor isomorphic to 
a $k$-step pronilsystem, called the $k$\emph{-step pronilfactor}. 
We shall denote this factor by $(Z_k,m_k,T)$. The inverse limit of 
$(Z_k,m_k,T)$ is the \emph{infinite-step pronilfactor of} $X$ and 
we denote it by $(Z_{\infty}, m,T)$. The infinite-step 
pronilfactor of $X$ coincides with the maximal factor of $X$ 
isomorphic to an infinite-step pronilsystem.  

In \cite{Host_Kra_Maass_nilsequences} and 
\cite{Shao_Ye_regionally_prox_orderd:2012}, the theory of 
regionally proximal relations for pairs of points in a topological system 
$(X,T)$ was developed, ultimately leading to a topological 
analogue of the pronilfactors $(Z_k,m_k,T)$. Specifically, it was 
shown that for every $k\in \N$, every minimal system $(X,T)$ 
admits a (unique) factor that is a $k$-step pronilsystem and which 
contains all $k$-step nilfactors of $(X,T)$ as factors. We denote 
this factor by $(X_k,T)$ and it is called the maximal $k$-step 
pronilfactor of $(X,T)$. The (topological) inverse limit of the sequence 
$((X_k,T))_{k\in \N}$, denoted by $(X_{\infty},T)$, is the maximal 
infinite-step pronilfactor of $(X,T)$.  

\subsection{Uniformity seminorms}

In their seminal paper 
\cite{Host_Kra_nonconventional_averages_nilmanifolds:2005}, Host 
and Kra  showed that it is possible to describe the measurable 
pronilfactors presented in the previous section by studying 
uniformity seminorms.  

Let $(X, \mu,T)$ be a measure preserving system and $f \in L^{\infty}(\mu)$. The $k$-uniformity seminorms of $f$, $\nnorm{f}_{U^{k}(X,\mu,T)}$, are defined inductively as follows: 
    \begin{equation*}
        \nnorm{f}_{U^0(X, \mu,T)} =  \int f d\mu 
    \end{equation*}
    and 
    \begin{equation} \label{eq def seminorm}
        \nnorm{ f}_{U^{k+1}(X, \mu,T)}^{2^{k+1}}= \lim_{H \to \infty }\frac{1}{H} \sum_{h=1}^{H} \nnorm{f \cdot \overline{T^hf}}_{U^k(X, \mu,T)}^{2^k}.
    \end{equation}


The main theorem in \cite{Host_Kra_nonconventional_averages_nilmanifolds:2005} shows that the $k$-step pronilfactor, $Z_{k}$, of an ergodic system $(X, \mu,T)$ is characterized by the property 
    \begin{equation*} 
        \nnorm{f}_{U^{k+1}(X, \mu,T)} = 0 \iff \E(f \mid Z_{k}) =0,
    \end{equation*}
    for any $f\in L^{\infty}(\mu)$. In particular, this implies that 
    \begin{equation}\label{uniformity of orthocomplement}
    \nnorm{f-\mathbb{E}(f | Z_k)\circ \pi_k}_{U^{k+1}(X, \mu,T)} = 0,
    \end{equation}
    where $\pi_k$ denotes the factor map $\pi_k: X \to Z_k$.

The following is well-known and easy to prove using induction and the definition of the seminorms given in \eqref{eq def seminorm}.

\begin{lemma}\label{uniformity in tensor product}
Let $(X,\mu,T)$ be a measure preserving system. For $s\in \N$ and $f\in L^{\infty}(\mu)$, we have that 
$$ \nnorm{f}_{U^{s+1}(X,\mu,T)}=0 \implies \nnorm{f \otimes f}_{U^s(X\times X, \mu\times \mu, T\times T)}=0.$$
\end{lemma}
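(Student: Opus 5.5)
Lemma~\ref{uniformity in tensor product} will be proved by induction on $s$, with the product formula for the seminorms on $X\times X$ as the main tool. Throughout, write $F=f\otimes f$, $\mu^2=\mu\times\mu$ and $S=T\times T$.

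\textbf{Step 1: a product formula.} For every $k\ge 1$ and all $g,h\in L^\infty(\mu)$ I will show
\begin{equation*}
\nnorm{g\otimes h}_{U^k(X\times X,\mu^2,S)}^{2^k}=\lim_{H\to\infty}\frac1H\sum_{h_1=1}^H\nnorm{(g\otimes h)\cdot\overline{S^{h_1}(g\otimes h)}}_{U^{k-1}(X\times X,\mu^2,S)}^{2^{k-1}},
\end{equation*}
which is just definition \eqref{eq def seminorm} applied on $X\times X$. Unwinding the recursion (equivalently, using the cube-measure description of Host--Kra), one obtains
\begin{equation*}
\nnorm{g\otimes h}_{U^k(X\times X,\mu^2,S)}^{2^k}=\lim_{H\to\infty}\frac{1}{H^k}\sum_{\underline h\in[1,H]^k}\int\prod_{\epsilon\in\{0,1\}^k}\mathcal C^{|\epsilon|}g\circ T^{\epsilon\cdot\underline h}\,d\mu\cdot\int\prod_{\epsilon}\mathcal C^{|\epsilon|}h\circ T^{\epsilon\cdot\underline h}\,d\mu ,
\end{equation*}
where $\mathcal{C}$ denotes complex conjugation; the limits exist by the Host--Kra theory. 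The integral over $X\times X$ factorises because the function is a tensor product.

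\textbf{Step 2: Cauchy--Schwarz in the averaging variables.} With $g=h=f$, the summand is $|I(\underline h)|^2$, where $I(\underline h)=\int\prod_\epsilon \mathcal{C}^{|\epsilon|}f\circ T^{\epsilon\cdot\underline h}\,d\mu$. (For real $f$ no care is needed; in general the two factors are complex conjugate or equal up to the usual bookkeeping, so the summand is still $|I|^2\ge0$.) Thus $\nnorm{F}_{U^k}^{2^k}$ is the Ces\`aro average of $|I(\underline h)|^2$ over $\underline h\in[1,H]^k$. The key known identity is the ``van der Corput'' form of the definition: for $k=s$,
\begin{equation*}
\nnorm{f}_{U^{s+1}}^{2^{s+1}}=\lim_{H}\frac1{H^s}\sum_{\underline h}\,\lim_{H'}\frac1{H'}\sum_{h'}\int\prod_{\epsilon}\mathcal C^{|\epsilon|}\big(f\,\overline{T^{h'}f}\big)\circ T^{\epsilon\cdot\underline h}\,d\mu ,
\end{equation*}
and the inner average in $h'$ of $\int \Phi_{\underline h}\cdot \overline{T^{h'}\Phi_{\underline h}}\,d\mu$, with $\Phi_{\underline h}=\prod_\epsilon \mathcal{C}^{|\epsilon|}f\circ T^{\epsilon\cdot\underline h}$, equals $\|\mathbb E(\Phi_{\underline h}\mid\mathcal I)\|_2^2$ by the mean ergodic theorem, where $\mathcal I$ is the invariant $\sigma$-algebra. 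By Cauchy--Schwarz this is at least $|\int\Phi_{\underline h}\,d\mu|^2=|I(\underline h)|^2$. Averaging over $\underline h$ gives
\begin{equation*}
\nnorm{F}_{U^s(X\times X)}^{2^s}\le\nnorm{f}_{U^{s+1}(X)}^{2^{s+1}},
\end{equation*}
which yields the claim at once, with no induction beyond the definition. If one prefers, one can phrase this as an induction on $s$: the base case $s=1$ is $\nnorm{F}_{U^1}^2=\lim\frac1H\sum|\int f\,\overline{T^hf}|^2\le\lim\frac1H\sum\|\mathbb E(f\overline{T^hf}\mid\mathcal I)\|^2=\nnorm f_{U^2}^4$, and the inductive step applies the same inequality to $f\overline{T^hf}$.

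\textbf{Main obstacle.} The delicate point is justifying that the iterated limits in the definition can be combined into a single Ces\`aro average over $\underline h$, so that the averages can be compared termwise. I would handle this by the standard fact that in the inductive formula \eqref{eq def seminorm} the limits exist and each level can be taken as a limit of averages of nonnegative quantities. The inductive version avoids multi-parameter limits altogether: assume $\nnorm{G\otimes G}_{U^{s-1}}^{2^{s-1}}\le\nnorm{G}_{U^{s}}^{2^s}$ for all bounded $G$. Then apply it to $G=f\overline{T^hf}$, noting that $F\cdot\overline{S^hF}=G\otimes G$, and average over $h$. Only the base case then uses the ergodic theorem, and ergodicity of $\mu$ is not needed.
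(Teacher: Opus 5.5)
Your inductive argument is correct and is exactly the ``induction and the definition'' proof the paper alludes to without writing out. It strengthens the statement to $\nnorm{G\otimes G}_{U^{s-1}}^{2^{s-1}}\le\nnorm{G}_{U^{s}}^{2^{s}}$, uses $F\cdot\overline{S^hF}=G_h\otimes G_h$ with $G_h=f\overline{T^hf}$, and handles the base case with the mean ergodic theorem and Cauchy--Schwarz. One small correction: for complex $f$ the summands are $\bigl(\int f\overline{T^hf}\,d\mu\bigr)^2$ (and in general $I(\underline h)^2$), not their absolute squares; since the left-hand side is a nonnegative real, you can bound it by the average of absolute values and nothing else changes, and the paper only needs real $f$ anyway.
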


It is shown in the proof of \cite[Prop. 2.2]{Host_Kra09} that if 
$(X,\mu,T)$ is a measure preserving system, $k\in \N$ and 
$f\in L^{\infty}(\mu)$, then 
$$\nnorm{f}_{U^k(X,\mu,T)}^{2^k}=\int_Y \nnorm{f}_{U^k(X,\mu_y,T)}^{2^k}\ d\nu(y),$$ 
where $y\mapsto \mu_y$ is the ergodic decomposition of $\mu$ as in
Theorem \ref{ergodic decomposition}. Using this fact the next 
result follows immediately. 

\begin{lemma}\label{uniformity for ergodic components}
Let $(X,\mu,T)$ be a measure preserving system, $k\in \N$ and 
$f\in L^{\infty}(\mu)$. If 
$\nnorm{f}_{U^k(X,\mu,T)}=0$, then for $\nu$-a.e. ergodic component $\mu_y$ of $\mu$ it holds that $\nnorm{f}_{U^k(X,\mu_y,T)}=0$.      
\end{lemma}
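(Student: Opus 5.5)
The statement to prove is Lemma~\ref{uniformity for ergodic components}. The plan is to combine the ergodic decomposition formula for the seminorms with the fact that a nonnegative integrand with zero integral vanishes almost everywhere.

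First, recall the identity quoted just before the lemma, from the proof of \cite[Prop. 2.2]{Host_Kra09}:
$$\nnorm{f}_{U^k(X,\mu,T)}^{2^k}=\int_Y \nnorm{f}_{U^k(X,\mu_y,T)}^{2^k}\ d\nu(y),$$
where $y\mapsto\mu_y$ is the ergodic decomposition of $\mu$ from Theorem \ref{ergodic decomposition}. Since $f\in L^\infty(\mu)$, I will fix a bounded Borel representative of $f$. Then, for $\nu$-a.e. $y$, $f$ is bounded $\mu_y$-a.e. and defines an element of $L^\infty(\mu_y)$, so each quantity $\nnorm{f}_{U^k(X,\mu_y,T)}$ makes sense.

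Second, I need the integrand to be a nonnegative function of $y$. For $k\ge 1$, Host--Kra show that $\nnorm{\cdot}_{U^k}^{2^k}$ equals the integral of $\prod_{\epsilon}\mathcal{C}^{|\epsilon|}f$ against the cube measure $\mu^{[k]}$, and that this integral is nonnegative. This holds for non-ergodic measures as well, since the inductive definition \eqref{eq def seminorm} expresses the $2^k$-th power as a limit of averages of nonnegative quantities, starting from $U^1$, where $\nnorm{f}_{U^1}^2=\|\E(f\mid\mathcal{I})\|_2^2\ge 0$. Measurability in $y$ is implicit in the cited identity.

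Third, assume $\nnorm{f}_{U^k(X,\mu,T)}=0$. Then the left-hand side is $0$, so the integral of a nonnegative measurable function vanishes. Hence $\nnorm{f}_{U^k(X,\mu_y,T)}^{2^k}=0$ for $\nu$-a.e. $y$. Taking $2^k$-th roots gives the claim.

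The only delicate point is the well-definedness issue: $\mu_y$ may be singular with respect to $\mu$, so the class of $f$ in $L^\infty(\mu_y)$ could in principle depend on the choice of representative. Any two representatives of $f$ differ on a $\mu$-null set $N$. Since $0=\mu(N)=\int\mu_y(N)\,d\nu$, the set $N$ is $\mu_y$-null for $\nu$-a.e. $y$, so the conclusion does not depend on the representative. The nonnegativity in the second step comes directly from Host--Kra and is not a real obstacle.
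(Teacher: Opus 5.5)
Your proposal is correct and follows the same route as the paper, which deduces the lemma immediately from the Host--Kra identity $\nnorm{f}_{U^k(X,\mu,T)}^{2^k}=\int_Y \nnorm{f}_{U^k(X,\mu_y,T)}^{2^k}\,d\nu(y)$: a nonnegative integrand with zero integral vanishes $\nu$-a.e. Your added remarks on the nonnegativity of $\nnorm{\cdot}_{U^k}^{2^k}$ for $k\ge 1$ and on the independence from the choice of representative of $f$ are correct details that the paper leaves implicit.
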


Finally, we will use a special case of Leibman's theorem 
\cite[Theorem 3]{Leibman05} establishing 
control of general polynomial multiple ergodic averages by 
uniformity seminorms of finite order. 

\begin{theorem}\label{Leibman}
Let $b,d\in \N$. Then, there exists 
$k=k(b,d)\in \N$ such that, for any essentially distinct 
tuple $p=(p_1,\ldots,p_d)\in \Z[x]^d$ 
of polynomials of degree at most $b$, 
for any ergodic system $(X,\mu,T)$, and all 
$f_1,\ldots,f_d\in L^{\infty}(\mu)$ with 
$\nnorm{f_j}_{U^k(X,\mu,T)}=0$, for some $j\in \{1,\ldots,d\}$, it holds that 
$$\lim_{N\to \infty} \frac{1}{|\Phi_N|}\sum_{n\in \Phi_N} T^{p_1(n)}f_1 \cdots T^{p_d(n)}f_d=0,$$
in $L^2(\mu)$, for any \Folner\ sequence $\Phi=(\Phi_N)$ in $\Z$.
\end{theorem}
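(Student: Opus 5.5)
We would prove this by PET induction, following Bergelson's scheme as refined by Leibman and Host--Kra. We control the averages by repeated applications of a van der Corput lemma until the polynomial family becomes linear. At that point the classical Host--Kra estimate bounds the averages by a uniformity seminorm.

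\textbf{Reductions.} We work with an arbitrary \Folner\ sequence $\Phi$ in $\Z$ throughout. The van der Corput lemma holds in the following \Folner\ form: if $(u_n)$ is bounded in a Hilbert space and
$$\lim_{H\to\infty}\frac1H\sum_{h=1}^H\limsup_{N\to\infty}\Bigl|\frac1{|\Phi_N|}\sum_{n\in\Phi_N}\langle u_{n+h},u_n\rangle\Bigr|=0,$$
then $\frac1{|\Phi_N|}\sum_{n\in\Phi_N}u_n\to0$. Two further reductions simplify the family:
\begin{itemize}
\item Replacing $n$ by $n+c$ turns $\Phi$ into another \Folner\ sequence, so it is harmless.
\item Composing with $T^{-p_1(n)}$ is measure preserving, so we may assume $p_1$ has minimal degree. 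If $p_1$ is constant, we absorb it into $f_1$; this does not change $\nnorm{f_1}_{U^k}$, by invariance of the seminorms under $T$.
\end{itemize}

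\textbf{Inductive step.} We set $u_n=\prod_i T^{p_i(n)}f_i$ and expand $\langle u_{n+h},u_n\rangle$. After composing with $T^{-p_{1}(n)}$, this becomes an average of a product of at most $2d$ terms of the form $T^{q(n)}g$. The new polynomials are $p_i(n+h)-p_1(n)$ and $p_i(n)-p_1(n)$, and each $g$ is $f_i$ or $\overline{f_i}$. For all but finitely many $h$, the new family is essentially distinct after merging terms with equal polynomials; merging multiplies functions, and a product still has vanishing seminorm if we track the function of interest. The new family has strictly smaller PET weight, where weight is the standard vector counting equivalence classes of polynomials by degree and leading coefficient.

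\textbf{Base case and the choice of $k$.} By induction, each inner limit tends to zero whenever some function has vanishing $U^{k'}$ seminorm. We must ensure that the distinguished function $f_j$ survives as a factor that the inductive hypothesis can exploit; this is a Cauchy--Schwarz bookkeeping step. The induction terminates at linear families $a_1n,\ldots,a_rn$ with distinct $a_i$. For these, the Host--Kra estimate
$$\limsup_N\Bigl\|\frac1{|\Phi_N|}\sum_{n\in\Phi_N}\prod_i T^{a_in}g_i\Bigr\|_{L^2(\mu)}\le C\min_i\nnorm{g_i}_{U^{r}}$$
holds for ergodic $\mu$, after a rescaling for $|a_i|>1$ that uses the fact that $\nnorm{g}_{U^r(T^a)}$ is controlled by $\nnorm{g}_{U^r(T)}$ up to a finite index issue. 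Ergodicity is used here, since $\nnorm{\cdot}_{U^k}$ is defined for ergodic $T$, and the averaging over $h$ reassembles the seminorm at one higher level. The number of PET steps and the size of the resulting linear families are bounded by a function of the initial weight. The initial weight is determined by $(b,d)$ up to finitely many patterns, so $k=k(b,d)$ is uniform over all tuples of degree at most $b$ and over all systems.

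\textbf{Main obstacle.} The main obstacle is this bookkeeping, not the PET descent itself. Van der Corput produces families where $f_j$ appears paired with shifted copies of itself, possibly through polynomials that become equivalent to others and so merge. At each step one must choose the reference polynomial $p_1$ so that the function carrying vanishing seminorm is never lost in a merger. Leibman handles this by choosing the ``minimal'' polynomial cleverly, or by first reducing to the case where $f_j$ is attached to a polynomial of maximal degree, and we would follow that choice.
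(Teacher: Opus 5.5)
The paper gives no proof of this statement; it quotes it as a special case of Leibman's theorem. Your PET outline is the route by which that theorem is proved, but as written it is not yet a proof: it has two genuine gaps, and one of its steps is false.

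\textbf{The constant-polynomial case and merging.} With the paper's definition of essentially distinct, one $p_i$ may be constant. If the distinguished $f_j$ sits at that polynomial, the conclusion is false. Take $p=(0,x)$, $f_2\equiv 1$, and $f_1$ a nonzero mean-zero function on a Bernoulli system, so that $\nnorm{f_1}_{U^k}=0$ for every $k$; the average is then identically $f_1$. So each $p_i$ must also be assumed non-constant. Your reduction ``if $p_1$ is constant, absorb it into $f_1$; the seminorm is unchanged'' is exactly where this is hidden: a function at a constant polynomial lies outside the average and can only be bounded in $L^2$, never by a seminorm. (Similarly, composing with $T^{-p_1(n)}$ is legitimate inside the correlations $\langle u_{n+h},u_n\rangle$, not in the $L^2$-norm of the average.) Second, the claim that after merging ``a product still has vanishing seminorm'' is false: $\nnorm{f}_{U^k}=0$ does not give $\nnorm{fg}_{U^k}=0$ (take $g=\overline{f}$). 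Mergers of $f_j$ with other functions happen for only finitely many $h$ and can be discarded. But if $p_j(x)=ax+c$ is linear, its two copies always merge. When the reference is $p_r\neq p_j$, they merge into $T^{p_j(n)-p_r(n)}\bigl(T^{ah}f_j\cdot\overline{f_j}\bigr)$. When $r=j$, they merge into a constant term that Cauchy--Schwarz throws away.

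\textbf{The inductive hypothesis and the choice of reference.} The function $T^{ah}f_j\cdot\overline{f_j}$ need not have vanishing $U^{k-1}$-seminorm for any $h\neq 0$. Only its average over $h$ is small, via $\nnorm{f}_{U^{k}}^{2^{k}}=\lim_{H\to\infty}\frac1H\sum_{h=1}^H\nnorm{f\cdot\overline{T^hf}}_{U^{k-1}}^{2^{k-1}}$. Hence your qualitative inductive hypothesis (``each inner limit tends to zero whenever some function has vanishing $U^{k'}$ seminorm'') says nothing about the derived families. The induction has to carry a quantitative bound, for instance $\limsup_N\|\cdot\|_{L^2}\le C\,\nnorm{g}_{U^{k'}}^{c}$ for $1$-bounded functions with $C,c$ uniform in $h$, closed by H\"older's inequality in $h$. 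Your linear base case is of this form; the inductive step is not. The case $r=j$ also shows that the choice of reference is the heart of the proof, not bookkeeping. For $p=(x,x^2)$ and $j=1$, the minimal-degree polynomials in $\langle u_{n+h},u_n\rangle$ are $n+h$ and $n$, and both carry copies of $f_1$. So your rule of subtracting a minimal-degree polynomial annihilates $f_1$, and the resulting bound involves only $f_2$. The remedy you allude to does work: inside the correlation integrals, compose with a $T^{-q(n)}$ chosen so that $f_j$ sits at a polynomial of maximal degree (here $q(n)=n^2$), then choose references so that a copy of it always survives. But it must be carried out together with the quantitative induction, and the proposal leaves precisely this step to Leibman.
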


\section{Proof of main theorem}\label{sec main proof}

In this section we present the proof of Theorem 
\ref{Main theorem}. First, we reduce to the case of infinite-step 
pronilsystems. Then, in Section \ref{key result} we prove 
Theorem \ref{k factor is characteristic intro} (stated below 
as Theorem \ref{k factor is characteristic}), 
expanding on some techniques introduced in \cite{Glasner_Huang_Shao_Ye_regionally_arithmetic_prog_nil:2020} and then used in \cite{Ye_Yu}; in 
particular, our proof uses topological properties of distal 
extensions and the theory of characteristic factors for polynomial 
multiple ergodic averages. Finally, we apply ideas from \cite{GKLMRR} 
with Lemma \ref{A-B pws} in the epicentre, in order to complete the proof 
of the main result. 

\subsection{A reduction to infinite-step pronilsystems}\label{reduction sec}

In view of Theorem \ref{GKMLRR}, Lemma \ref{Brown} and a 
technical argument which we explain in Appendix \ref{App B}, 
the 
proof of Theorem \ref{Main theorem} is implied by 
the following result, the proof of which in turn becomes the 
goal of this section. 

\begin{theorem}\label{finite step characteristic for infinite step systems}
Let $d\in \N$ and an essentially distinct tuple 
$p\in \Z^d[x]$. Then, there exists $k\in \N$ such that for all minimal infinite-step pronilsystems, $(X_{\infty},T)$, and all nonempty, open sets $U_1,\ldots,U_d \subset X_{\infty}$, 
\begin{equation}\label{finite set nilfactor eq 2}
R_p(\pi_k U_1,\ldots,\pi_k U_d) \setminus R_p(U_1,\ldots,U_d) 
\end{equation}
is not piecewise syndetic, where $\pi_k:X_{\infty} \to X_k$ denotes the factor map to the maximal $k$-step pronilfactor of $(X_{\infty},T)$.   
\end{theorem}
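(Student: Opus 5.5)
The plan is to verify the hypothesis of Lemma \ref{A-B pws} with $A=R_p(\pi_k U_1,\ldots,\pi_k U_d)$ and $B=R_p(U_1,\ldots,U_d)$. Since pronilsystems are distal, the factor map $\pi_k:X_\infty\to X_k$ is open (a factor map between minimal distal systems is open), so $\pi_k U_i$ is open and $A$ is a genuine return-time set. By \eqref{inclusion_0} we have $B\subset A$. So it suffices to show: for every finite $F\subset A$, the set $\bigcap_{f\in F}(B-f)$ is syndetic.

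First, fix the constant. Let $s$ bound the degrees of the $p_i$. We may assume, after a harmless translation of $n$ and replacing $U_i$ by $T^{c}U_i$ for a common constant, that the polynomials vanish at $0$ (essential distinctness is preserved). We then take $k$ to be at least the $k(d+1,s')$ of Theorem \ref{k factor is characteristic} (Theorem \ref{k factor is characteristic intro}), where $s'$ bounds the degrees of the shifted polynomials $p_i(n+f)-p_i(f)$. This bound depends only on $s$, since translation does not raise degree. Take $k$ also large enough for Theorem \ref{Leibman}.

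Next, fix a finite $F\subset A$. For each $f\in F$ there is a point $y_f\in X_k$ with $T^{p_i(f)}y_f\in\pi_k U_i$ for all $i$. Set $W_{f,i}=T^{p_i(f)}U_i$ and choose the open set
\[
V_{f}=\pi_k^{-1}\Bigl(\textstyle\bigcap_i T^{p_i(f)}\pi_kU_i\Bigr),
\]
so that $\pi_k(V_f)\cap\pi_k(W_{f,1})\cap\dots\cap\pi_k(W_{f,d})\neq\emptyset$. The condition $n+f\in B$ says that $\bigcap_i T^{-p_i(n+f)}U_i\neq\emptyset$, which is equivalent to $\bigcap_i T^{-q_{f,i}(n)}W_{f,i}\neq\emptyset$ with $q_{f,i}(n)=p_i(n+f)-p_i(f)$. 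The tuples $q_f$ are essentially distinct and vanish at $0$. To obtain syndeticity rather than mere nonemptiness, I would use minimality of $X_\infty$: cover $X_\infty$ by finitely many translates $T^{-r}O$ of a small open set $O$. The aim is to show that for every $x$ and every finite window there is an $n$ in a bounded gap, by applying Theorem \ref{k factor is characteristic} to the system with the sets $U_{f,0}=O'\cap V_f$-type pieces and $U_{f,i}=W_{f,i}$, where one extra "anchor" set $U_{f,0}$ is allowed. This is exactly why the statement of Theorem \ref{k factor is characteristic intro} includes $U_{j,0}$ and a simultaneous $n$ for all $j$. Concretely, for each $m$ in a finite set of shifts, the set of good $n$ contains a set of the form $\{n:\ x\in\text{some open }U\text{ returning}\}$, and the set of $n$ such that $T^{n}$ moves a point into a fixed open set is syndetic by minimality. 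Following \cite{GKLMRR}, I would show that $\bigcap_{f}(B-f)$ contains a set of the form $\{n : T^n x_0\in G\}$ for some nonempty open $G$ and point $x_0$; by minimality this set is syndetic.

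The main obstacle is this last step: passing from existence of one simultaneous $n$ (Theorem \ref{k factor is characteristic intro}) to a syndetic set. My first attempt is the standard trick of including the anchor $U_{j,0}$: apply the theorem with all sets replaced by $T^{-m}$-translates for $m$ ranging over a finite set that makes a cover, i.e., with the time variable absorbed through $T^{n}$ acting on an anchor point. Concretely, one considers the diagonal-type condition $U_{f,0}\cap\bigcap_i T^{-q_{f,i}(n)}W_{f,i}$, where $U_{f,0}$ is shrunk to a neighborhood $N$ of a common point. Any $x$ in the projection of all the fibers, translated along $T^{m}$, then yields good $n$'s within bounded distance of $m$, and uniform bounds follow from compactness of $X_k$ and openness of $\pi_k$. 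Lemma \ref{A-B pws} then finishes the proof.
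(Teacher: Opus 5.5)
You have the reduction right, and it is the paper's: Lemma \ref{A-B pws} with $A=R_p(\pi_kU_1,\ldots,\pi_kU_d)$ and $B=R_p(U_1,\ldots,U_d)$, re-centring at each $f\in F$ via $q_{f,i}(n)=p_i(n+f)-p_i(f)$, and one application of Theorem \ref{k factor is characteristic} with $m=|F|$. The anchor can simply be $U_{f,0}=X_\infty$. This gives one $n_0$ with $n_0+f\in B$ for every $f\in F$. (Fix the sign: $W_{f,i}=T^{-p_i(f)}U_i$, and likewise inside $V_f$.)

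This only proves $\bigcap_{f\in F}(B-f)\neq\emptyset$. The step you call the main obstacle, upgrading to syndeticity, is a genuine gap, and none of the mechanisms you sketch produces it.
\begin{itemize}
\item Replacing all sets by $T^{-m}$-translates leaves the set of good $n$ unchanged, since $\bigcap_i T^{-q_{f,i}(n)}T^{-m}W_{f,i}=T^{-m}\bigcap_i T^{-q_{f,i}(n)}W_{f,i}$. The anchor $U_{f,0}$ constrains the starting point, not the time.
\item Shifting the time variable $n\mapsto n+m$ would require re-centring at $m+f$. That needs $m+f\in A$ for all $f\in F$, which you do not know for $m$ in a syndetic set.
\item Theorem \ref{k factor is characteristic} itself gives only positive upper Banach density, which Lemma \ref{A-B pws} cannot use.
\item The containment $\bigcap_f(B-f)\supset\{n:T^nx_0\in G\}$, with $T$ the map of $X_\infty$, is false for nonlinear $p$. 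Take an irrational rotation by $\alpha$, $p_1=0$, $p_2(n)=n^2$, and short arcs $U_1,U_2$. Then $B=\{n:n^2\alpha\in U_2-U_1\}$. Since $(n\alpha,n^2\alpha)$ is equidistributed in the $2$-torus, no set $\{n:n\alpha\in J\}$ with $J$ nonempty open lies inside $B$.
\end{itemize}

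What is missing is a second, genuinely recurrence-theoretic input, applied after $n_0$ is found.
\begin{itemize}
\item The sets $V_{f,i}=T^{-p_i(n_0+f)}U_i$ have nonempty open intersection $V_f=\bigcap_iV_{f,i}$.
\item Re-centre at $n_0+f$: set $q_{f,i}(n)=p_i(n+n_0+f)-p_i(n_0+f)$.
\item By minimality, choose $r_f$ with $W:=\bigcap_fT^{-r_f}V_f\neq\emptyset$.
\item As the paper does, add linear terms $\ell_f n$ so that the concatenated tuple $q=(q_{f,i}+\ell_f n)_{f,i}$, which vanishes at $0$, is essentially distinct.
\item If $y$ witnesses $n\in R_q\bigl((T^{-r_f}V_{f,i})_{f,i}\bigr)$, then $T^{\ell_f n+r_f}y$ witnesses $n+n_0+f\in B$. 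Hence $R_q(\ldots)+n_0\subset\bigcap_f(B-f)$.
\item $R_q(\ldots)$ contains $\{n:\bigcap_{f,i}T^{-(q_{f,i}(n)+\ell_f n)}W\neq\emptyset\}$. This set is syndetic by topological polynomial multiple recurrence in minimal systems (the IP polynomial Szemer\'edi theorem of \cite{Bergelson_McCutcheon00}).
\end{itemize}

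Your idea of a set $\{n:S^ny_0\in G\}$ can be rescued only in a different system. For instance, lift the polynomial orbit of fixed witnesses to a linear orbit in a larger nilsystem and use distality. Either way, some polynomial-recurrence ingredient is indispensable, and minimality of $X_\infty$ alone does not supply it.
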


\begin{remark*}
We remark that the need to consider set interiors in 
\eqref{finite set nilfactor eq 2} disappears in this case, as opposed to \eqref{finite set nilfactor eq}, 
because the factor map $\pi_k:X_{\infty}\to X_k$ is open as a 
factor map between minimal and distal systems (see, e.g. \cite[Chapter 7]{Auslander_minimal_flows_and_extensions:1988}). 
\end{remark*}

To see the relation between Theorem \ref{finite step characteristic for infinite step systems} 
and Theorem \ref{Main theorem}, recall the 
notation in the statement of the latter and 
further let $\phi_j:X_{\infty}\to X_j$ denote 
the factor map from $X_{\infty}$ 
to $X_j$, for each $j\in \N$.   

By the universal property of 
inverse limits and the definition 
of $X_{\infty}$ as the inverse 
limit of the maximal pronilfactors 
$((X_k,T))_{k\in \N}$ of $(X,T)$, 
we have that $\pi_k=\phi_k \circ \pi_{\infty}$, 
where $\phi_k: X_{\infty} \to X_k$ denotes the 
factor map. Then, letting $V_i=(\pi_{\infty}U_i)^{\circ}$, for $i=1,\ldots,d$, we see that
\begin{align}\label{inclusion} R_p(\phi_k V_1,\ldots,\phi_k V_d) & \setminus R_p(U_1,\ldots,U_d)  \subset \nonumber \\
\left( R_p(\phi_k V_1,\ldots,\phi_k V_d) \setminus R_p(V_1,\ldots,V_d)\right) & \cup  \left(  R_p(V_1,\ldots,V_d) \setminus R_p(U_1,\ldots,U_d) \right).    
\end{align}
But, if $k\in \N$ is the integer from Theorem \ref{finite step characteristic for infinite step systems}, the last union in \eqref{inclusion} is a 
non-piecewise syndetic set by Theorems \ref{finite step characteristic for infinite step systems} and \ref{Main theorem} and Lemma \ref{Brown}. 
However, observe that, as $\phi_k$ is an open map, it holds that 
$$(\pi_kU_i)^{\circ}=(\phi_k\circ \pi_{\infty}U_i)^{\circ} \supset \phi_k(\pi_{\infty}U_i)^{\circ},$$
and thus
$$R_p((\pi_k U_1)^{\circ},\ldots,(\pi_k U_d)^{\circ}) \supset R_p(\phi_k V_1,\ldots,\phi_k V_d).$$
In particular, Theorem \ref{Main theorem} is 
slightly stronger than the direct corollary of Theorem \ref{finite step characteristic for infinite step systems} and Theorem \ref{GKMLRR} which we just explained. We leave the details of the actual verification for Appendix \ref{App B}.

As was correctly speculated in \cite[Section 5.1]{GKLMRR}, the proof of Theorem 
\ref{finite step characteristic for infinite step systems} (and thus of Theorem \ref{Main theorem}) is
related to the results of Ye and Yu in \cite{Ye_Yu}. Although we do not 
use their main result -- see Theorem \ref{Ye Yu} -- as a black box, 
our proof is inspired by their arguments (which in turn seem 
to be inspired by Glasner, Huang, Shao and Ye's earlier work 
\cite{Glasner_Huang_Shao_Ye_regionally_arithmetic_prog_nil:2020}). 

In fact, the factor $k$ we find in Theorem \ref{finite step characteristic for infinite step systems} is one step larger than the factor arising in Theorem \ref{Ye Yu}. This is optimal in the sense that our characteristic factor has to be -- in general -- one step larger than the optimal characteristic factor in the sense of Theorem \ref{Ye Yu}. Indeed, as shown in Proposition $5.1$ of \cite{GKLMRR}, the maximal equicontinuous factor, $X_1$, is not characteristic in the sense of Theorem \ref{finite step characteristic for infinite step systems} for the polynomials $n\mapsto 0$, $n\mapsto n$, $n\mapsto 2n$, but it is characteristic in the sense of Theorem \ref{Ye Yu}. Nevertheless, our proof shows that the maximal $2$-step pronilfactor, $X_2$, is characteristic in the sense of Theorem \ref{finite step characteristic for infinite step systems} for these polynomials.

\subsection{A key intermediate result}\label{key result} 

In order to prove Theorem 
\ref{finite step characteristic for infinite step systems} we 
aim to apply the method that was developed in \cite{GKLMRR} 
in order to show that the 
difference of the return times sets in \eqref{set difference of return times} is not piecewise syndetic. To 
facilitate this, the following key result seems necessary. 

\begin{theorem}\label{k factor is characteristic}
Let $d,s\in \N$. Then, there exists $k=k(d,s)\in \N$ with 
the following property. For any minimal infinite-step 
pronilsystem $(X_{\infty},T)$, any $m\in \N$ and $p_j=(p_{j,1},\ldots,p_{j,d})\in \Z[x]^{d}$, $j=1,\ldots,m$, 
of essentially distinct 
polynomials vanishing at $0$ of degree at most $s$, if $U_{j,i}\subset X_{\infty}$, for $i=0,1,\ldots,d$ and 
$j=1,\ldots,m$, are open sets such that 
$$\pi_k(U_{j,0}) \cap \cdots \cap \pi_k(U_{j,d}) \neq \emptyset, $$
then there is $n\in \N$ such that 
\begin{equation}\label{same return time}
U_{j,0} \cap \bigcap_{i=1}^d T^{-p_{j,i}(n)}U_{j,i} \neq \emptyset,    
\end{equation}
for each $j=1,\ldots,m$.
In fact, the set of $n\in \Z$ for which the intersection in \eqref{same return time} is 
nonempty has positive upper Banach density. Here $\pi_k:X_{\infty}\to X_k$ denotes the factor map to the maximal $k$-step pronilfactor.
\end{theorem}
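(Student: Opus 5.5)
The plan is to reduce the combinatorial statement to a positivity statement for a polynomial multiple ergodic average. Leibman's theorem (Theorem \ref{Leibman}) then lets us replace every function by its projection onto the $k$-step pronilfactor, and the resulting nil-average is shown to be positive using the topology of $X_k$.

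\textbf{Setup and choice of $k$.} Write $p_{j,0}\equiv 0$. Let $\mu$ be the unique invariant measure of the minimal pronilsystem $X_\infty$; it has full support. Take $k$ from Theorem \ref{Leibman} for degree $s$ and $d+1$ functions, enlarged by one or two to absorb product and tensor considerations (Lemma \ref{uniformity in tensor product}). Fix $z_j\in\bigcap_{i}\pi_k(U_{j,i})$ for each $j$.

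Shrink each $U_{j,i}$ and choose continuous $0\le f_{j,i}\le 1_{U_{j,i}}$ with $f_{j,i}>0$ on a smaller open set whose $\pi_k$-image still contains a neighbourhood of $z_j$. This uses that $\pi_k$ is open. It suffices to show that
$$\frac1{|\Phi_N|}\sum_{n\in\Phi_N}\ \prod_{j=1}^m\int_{X_\infty}\prod_{i=0}^d T^{p_{j,i}(n)}f_{j,i}\,d\mu$$
has positive $\liminf$ along some F\o lner sequence $\Phi$. Indeed, for every $n$ with a positive summand, each intersection in \eqref{same return time} is nonempty, and a positive average forces positive upper Banach density. We rewrite the summand as a single integral over $(X_\infty^m,\mu^{\otimes m})$ of $\prod_{j,i}(T^{\times m})^{p_{j,i}(n)}F_{j,i}$, where $F_{j,i}$ is $f_{j,i}$ placed in the $j$-th coordinate.

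\textbf{Replacing functions by their projections.} We pass to ergodic components of $\mu^{\otimes m}$ via Lemma \ref{uniformity for ergodic components}. On the product, the seminorm of $f\otimes 1$ vanishes when that of $f$ does. By the Host--Kra structure theorem, the measurable $Z_k$ of the uniquely ergodic pronilsystem $X_\infty$ is identified with the topological $X_k$. Telescoping via \eqref{eq:18} and applying Theorem \ref{Leibman} then replaces each $f_{j,i}$ by $g_{j,i}=\E(f_{j,i}\mid X_k)$ without changing the limit.

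Because $\pi_k:X_\infty\to X_k$ is an open distal factor map of pronilsystems, the fibre measures are Haar measures on fibres varying continuously. Hence each $g_{j,i}$ is continuous on $X_k$ and strictly positive near $z_j$.

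\textbf{Positivity on the nilfactor.} The limit becomes
$$\lim_N\frac1{|\Phi_N|}\sum_{n\in\Phi_N}\int_{X_k^m}\prod_{j,i}g_{j,i}(T^{p_{j,i}(n)}y_j)\,dm_k^{\otimes m}(y).$$
For each $y\in X_k^m$, consider the polynomial orbit $n\mapsto (T^{p_{j,i}(n)}y_j)_{j,i}$ in $X_k^{m(d+1)}$. By Leibman's equidistribution theorem for polynomial sequences on nilmanifolds (passing to finite-step approximants of the pronilsystem), its averages converge to the integral over a sub-nilmanifold. Since all $p_{j,i}(0)=0$, this sub-nilmanifold contains the point $(y_j,\dots,y_j)_j$.

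For $y$ in a small neighbourhood of $(z_1,\dots,z_m)$, a set of positive measure, the integrand $\prod g_{j,i}$ is positive near that diagonal point. The equidistribution measure has full support on the orbit closure, so each such $y$ contributes a positive limit. Fatou's lemma then gives a positive total.

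\textbf{Main obstacle.} The delicate step is this last one. One must justify that the limiting distribution of the polynomial orbit charges every neighbourhood of the starting point for a positive-measure set of $y$, uniformly enough to integrate. This requires working in a nilsystem approximant of $X_k$ and controlling the approximation error by uniform continuity of the $g_{j,i}$. A secondary technical point is ensuring $k$ is uniform in $m$; this holds because Leibman's bound depends only on degree and the number of functions per factor, not on how many tuples are placed in the product.
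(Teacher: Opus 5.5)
\textbf{The gap is in your replacement step.} You apply Theorem \ref{Leibman} on the ergodic components of $(X_\infty^m,\mu^{\otimes m},T^{\times m})$ to the combined family $(p_{j,i})_{j,i}$. This fails in two ways.

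First, that family need not be essentially distinct: the tuples $p_j$ may repeat, or share polynomials across different $j$. After grouping equal iterates you get genuine tensors such as $(f_{1,i}-g_{1,i})\otimes f_{2,i}$. Vanishing of $\nnorm{f_{1,i}-g_{1,i}}_{U^k}$ does not force the seminorm of such a tensor to vanish on the ergodic components. Already for $k=1$: if $Tf=\lambda f$ with $\lambda\neq 1$, then $\nnorm{f}_{U^1}=0$, but $f\otimes\overline{f}$ is $T\times T$-invariant. So your ``$f\otimes 1$'' observation does not suffice.

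Second, and decisively, the $k$ in Theorem \ref{Leibman} depends on the number of polynomials, which on the product is up to $md$. For instance, with $d=1$ and $p_j=(jx)$, $j=1,\dots,m$, you are averaging along $x,2x,\dots,mx$. Used as a black box, Theorem \ref{Leibman} then only gives control by a seminorm whose order grows with $m$. Your claim that the bound depends only on ``the number of functions per factor'' is not something that theorem provides. Its input is an ergodic system and a polynomial tuple, and the ergodic components of $\mu^{\otimes m}$ carry no product structure it can exploit. Since $k=k(d,s)$ must be uniform in $m$ (in the application $m=|F|$ is an arbitrary finite subset size), the argument as written does not prove the statement.

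\textbf{The missing idea is never to put the $m$ tuples into one average.} The summand is $\prod_j a_j(n)$ with $1$-bounded factors. By \eqref{eq:18}, it suffices to show
\[
\frac{1}{|\Phi_N|}\sum_{n\in\Phi_N}|a_j(n)-b_j(n)|\to 0
\]
for each $j$ separately. An average of absolute values is controlled by Cauchy--Schwarz and squaring. The square $\bigl|\int f_0\prod_i T^{p_i(n)}f_i\,d\mu\bigr|^2$ is an integral over $(X\times X,\mu\times\mu)$ with only the $d$ essentially distinct iterates $p_i$, applied to $f_i\otimes\overline{f_i}$. Lemma \ref{uniformity in tensor product}, the ergodic decomposition, and Theorem \ref{Leibman} for $d$ polynomials then finish the job; this is Theorem \ref{k+1 factor is characteristic for ergodic averages}. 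It costs exactly one step, which is why the factor is one step larger than Leibman's.

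\textbf{Your positivity step is a valid alternative route once this is repaired.} The paper uses only $\supp(\lambda^{d+1}_k)=R^{(d+1)}_{\pi_k}$, i.e.\ almost-everywhere full support of the fibre measures. From this it gets positive-measure sets $A_j$ on which every $\E(\mathrm{1}_{U_{j,i}}\mid\mathcal{Z}_k)>a_j$, and then applies Corollary \ref{Bergelson Leibman tensor product} on $Z_k^m$. Your route via Leibman's pointwise equidistribution on nil-approximants avoids the Bergelson--Leibman theorem. However, it needs the stronger fact that the fibre measures of $\pi_k$ vary continuously and have full support on \emph{every} fibre. You should derive that from the nil structure, not from openness and distality alone.
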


We stress that the utility of Theorem 
\ref{k factor is characteristic} lies in the fact that it 
guarantees the same return-time along different polynomial tuples 
and different families of open sets. This is ultimately the reason 
why the step of the characteristic factor increases by $1$ 
compared to Theorem \ref{Ye Yu}, which essentially 
corresponds to the case $m=1$ in the statement of 
Theorem \ref{k factor is characteristic}.

To prove Theorem \ref{k factor is characteristic} we will use 
the following results; a corollary of Leibman's Theorem \ref{Leibman} and a corollary of Bergelson-Leibman's polynomial multiple recurrence theorem.

\begin{theorem}\label{k+1 factor is characteristic for ergodic averages}
Let $b,d \in \N$ and $p_1,\ldots,p_d\in \Z[x]$ be essentially 
distinct polynomials of degree at most $b$. 
Then, there is $s\in \N$ such that for any ergodic system 
$(X,\mu,T)$, and any $f_0,f_1,\ldots,f_d\in L^{\infty}(\mu)$, if 
$\nnorm{f_j}_{U^{s}(X,\mu,T)}=0$, for some $j\in \{1,\ldots,d\}$, we have that
\begin{equation}\label{ergodic averages for k+1}
\lim_{N\to \infty} \frac{1}{|\Phi_N|}\sum_{n\in \Phi_N} \left| \int_{X} f_0 \cdot T^{p_1(n)}f_1 \cdots T^{p_d(n)}f_d\ d\mu \right| =0,     
\end{equation}
where $\Phi=(\Phi_N)$ is any \Folner\ sequence in $\Z$.
\end{theorem}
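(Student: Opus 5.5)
The plan is to deduce Theorem \ref{k+1 factor is characteristic for ergodic averages} from Theorem \ref{Leibman} by the standard van der Corput / product-system trick. Squaring the average of absolute values turns it into an average on $X\times X$, and Lemma \ref{uniformity in tensor product} lets us pass from $f_j$ to $f_j\otimes \overline{f_j}$ at the cost of one degree of uniformity. Fix $b,d$ and let $k=k(b,d)$ be the integer given by Theorem \ref{Leibman}. We take $s=k+1$, which fits the paper's remark that the characteristic factor in the sense of Theorem \ref{k factor is characteristic} is one step larger than the one in Theorem \ref{Ye Yu}.

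\textbf{Step 1: square the average.} By Cauchy--Schwarz,
\[
\Bigl(\frac{1}{|\Phi_N|}\sum_{n\in\Phi_N}\Bigl|\int_X f_0\prod_{i=1}^d T^{p_i(n)}f_i\,d\mu\Bigr|\Bigr)^2\le \frac{1}{|\Phi_N|}\sum_{n\in\Phi_N}\Bigl|\int_X f_0\prod_{i=1}^d T^{p_i(n)}f_i\,d\mu\Bigr|^2 .
\]
The summand on the right equals
\[
\int_{X\times X} (f_0\otimes\overline{f_0})\prod_{i=1}^d (T\times T)^{p_i(n)}(f_i\otimes \overline{f_i})\,d(\mu\times\mu).
\]
It therefore suffices to show that the averages
\[
\frac{1}{|\Phi_N|}\sum_{n\in\Phi_N}\prod_{i=1}^d (T\times T)^{p_i(n)}(f_i\otimes\overline{f_i})
\]
tend to $0$ in $L^2(\mu\times\mu)$. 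Pairing with the bounded function $f_0\otimes\overline{f_0}$ then finishes the argument.

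\textbf{Step 2: transfer the seminorm hypothesis.} Since $\nnorm{f_j}_{U^{k+1}(X,\mu,T)}=0$, Lemma \ref{uniformity in tensor product} gives $\nnorm{f_j\otimes\overline{f_j}}_{U^{k}(X\times X,\mu\times\mu,T\times T)}=0$. The lemma is stated for $f\otimes f$; the version with complex conjugate is proved by the same induction, and for real-valued functions the two coincide anyway.

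\textbf{Step 3: deal with non-ergodicity of the product.} This is the main obstacle, because $\mu\times\mu$ need not be ergodic and Theorem \ref{Leibman} is stated only for ergodic systems. We handle it with the ergodic decomposition (Theorem \ref{ergodic decomposition}) of $\mu\times\mu$ as $\int \lambda_y\,d\nu(y)$. By Lemma \ref{uniformity for ergodic components}, for $\nu$-a.e.\ $y$ we have $\nnorm{f_j\otimes\overline{f_j}}_{U^k(X\times X,\lambda_y,T\times T)}=0$. Theorem \ref{Leibman} applied to $(X\times X,\lambda_y,T\times T)$ then shows that the polynomial averages tend to $0$ in $L^2(\lambda_y)$ for a.e.\ $y$. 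The $L^2(\mu\times\mu)$ norm squared of the average is the $\nu$-integral of these $L^2(\lambda_y)$ norms squared. Each of these is bounded by $\prod_i\|f_i\|_\infty^4$, so dominated convergence gives convergence to $0$ in $L^2(\mu\times\mu)$.

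Finally, the limit in Step 1 is $0$, hence so is the original $\limsup$, and the limit in \eqref{ergodic averages for k+1} exists and equals $0$ for every \Folner\ sequence $\Phi$. Both $k$ and $s=k+1$ depend only on $b$ and $d$.
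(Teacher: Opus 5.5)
Your proposal is correct and follows essentially the same route as the paper. Both take $s=k+1$ with $k=k(b,d)$ from Theorem \ref{Leibman}, square the average to pass to $X\times X$, combine Lemma \ref{uniformity in tensor product} with Lemma \ref{uniformity for ergodic components} on the ergodic decomposition of $\mu\times\mu$, and apply Theorem \ref{Leibman} on each ergodic component. You are slightly more careful than the paper in two places: you write $f_i\otimes\overline{f_i}$ for complex-valued functions, and you make the dominated-convergence interchange of the limit and the $\nu$-integral explicit.
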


\begin{proof}
Let $k=k(b,d)\in \N$ be the integer arising from Theorem \ref{Leibman} and $s=k+1$. 
Since the sequence 
$$\left( \left| \int_{X} f_0 \cdot T^{p_1(n)}f_1 \cdots T^{p_d(n)}f_d\ d\mu \right| \right)_{n\in \Z}$$
is bounded, \eqref{ergodic averages for k+1} follows from 
$$\lim_{N\to \infty} \frac{1}{|\Phi_N|}\sum_{n\in \Phi_N} \left| \int_{X} f_0 \cdot T^{p_1(n)}f_1 \cdots T^{p_d(n)}f_d\ d\mu \right|^2 =0. $$
The latter is equivalent to 
\begin{equation}\label{ergodic averages for k+1 squared}
\lim_{N\to \infty} \frac{1}{|\Phi_N|}\sum_{n\in \Phi_N} \int_{X\times X} (f_0\otimes f_0) \cdot (T\times T)^{p_1(n)}(f_1\otimes f_1) \cdots (T\times T)^{p_d(n)}(f_d\otimes f_d)\ d(\mu\times \mu) =0.     
\end{equation}
Now, assuming that $\nnorm{f_j}_{U^{s}(X,\mu,T)}=0$, for some $j\in \{1,\ldots,d\}$, Lemma \ref{uniformity in tensor product}, the ergodic decomposition of $\mu \times \mu$ and Lemma \ref{uniformity for ergodic components} together imply that 
\begin{equation}\label{uniformity in tensor product eq}
\nnorm{f_j\otimes f_j}_{U^k(X\times X,(\mu\times \mu)_y, T\times T)}=0,    
\end{equation}
for $\nu$-a.e. $y\in Y$ in the notation of Theorem \ref{ergodic decomposition}. The proof is complete because \eqref{ergodic averages for k+1 squared} follows from
$$\int_Y \lim_{N\to \infty} \frac{1}{|\Phi_N|}\sum_{n\in \Phi_N} \int_{X\times X} (f_0\otimes f_0) \cdot (T\times T)^{p_1(n)}(f_1\otimes f_1) \cdots (T\times T)^{p_d(n)}(f_d\otimes f_d)\ d(\mu\times \mu)_y\ d\nu(y) = 0,$$
which holds by an application of Theorem \ref{Leibman} to each ergodic component for which \eqref{uniformity in tensor product eq} is true. 
\end{proof}

\begin{theorem}{{\cite[Theorem A]{Bergelson_Leibman96}}}\label{Bergelson-Leibman}
Let $(X,\mathcal{B},\mu)$ be a probability space and $T_1,\ldots,T_t$ be commuting, invertible, measure preserving transformations. Let $p_{1,1},\ldots,p_{1,t},p_{2,1},\ldots,p_{2,t},\ldots,p_{k,1},\ldots,p_{k,t}$ be polynomials with rational coefficients taking on integer values and vanishing at $0$, and let $A\in \mathcal{B}$ with $\mu(A)>0$. Then, 
$$\liminf_{N\to \infty}\frac{1}{N}\sum_{n=1}^N \mu\left( A \cap \prod_{j=1}^t T_j^{-p_{1,j}(n)}A\cap \cdots \cap \prod_{j=1}^t T_j^{-p_{k,j}(n)}A  \right)>0. $$
\end{theorem}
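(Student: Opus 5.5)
The statement is the measure-theoretic polynomial Szemer\'edi theorem for commuting transformations. I would prove it along the lines of Furstenberg's structure-theoretic proof of Szemer\'edi's theorem, with PET induction replacing the linear van der Corput argument.

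\textbf{Reductions.} We may assume the $\sigma$-algebra is countably generated. Passing to the ergodic decomposition of the $\Z^t$-action generated by $T_1,\ldots,T_t$ is harmless: if the liminf is positive for a positive-measure set of components $\mu_y$ with $\mu_y(A)>0$, then Fatou's lemma gives positivity for $\mu$. Call a factor $\mathcal{Y}$ of $X$ \emph{SZ} if the conclusion holds for every $\mathcal{Y}$-measurable $A$ of positive measure, for every finite polynomial family as in the statement. The trivial factor is SZ. The goal is to show that a maximal SZ factor exists and equals $X$.

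\textbf{Steps, in order.}
\begin{enumerate}[(i)]
\item \emph{Inverse limits.} SZ passes to inverse limits of chains. Given $A$, choose a factor $\mathcal{Y}_\alpha$ in the chain and a $\mathcal{Y}_\alpha$-measurable $A'$ for which $A\cap A'$ takes up at least a $(1-\varepsilon)$-fraction of $A'$ on a large piece. Apply the SZ property in $\mathcal{Y}_\alpha$ to that piece and control the error by a union bound over the $k+1$ sets. By Zorn's lemma a maximal SZ factor $\mathcal{Y}$ then exists.
\item \emph{Dichotomy.} Suppose $\mathcal{Y}\neq X$. By the Furstenberg--Zimmer dichotomy, applied to the $\Z^t$-action, either $X$ is relatively weak mixing over $\mathcal{Y}$ for some subgroup action that matters, or there is a proper intermediate factor that is a compact extension of $\mathcal{Y}$ for one of the group elements $\prod_j T_j^{a_j}$ entering the polynomial expressions. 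This needs the version of the dichotomy that handles subgroups, as in Furstenberg--Katznelson.
\item \emph{Compact extensions.} SZ lifts through compact extensions. Approximate $\mathbf{1}_A$ by a function whose orbit under the relevant transformations lies, fibrewise, in a finite-dimensional module. Color $\Z$ according to which $\varepsilon$-cell of this finite set the fibre functions $\prod_j T_j^{p_{i,j}(n)}\mathbf{1}_A$ fall into. The \emph{polynomial van der Waerden theorem} then produces many $n$, and in fact a set of positive lower density by a uniformity argument, for which the corresponding fibres nearly coincide. Combined with the SZ property of the base, this gives the positive liminf. The polynomial van der Waerden theorem can itself be proved first, topologically, by a PET-type induction on minimal systems.
\item \emph{Weak mixing extensions.} SZ lifts through relatively weak mixing extensions. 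Show that
\[
\frac1N\sum_{n=1}^N \prod_{i=1}^k \prod_{j=1}^t T_j^{p_{i,j}(n)} f_i - \frac1N\sum_{n=1}^N \prod_{i=1}^k \prod_{j=1}^t T_j^{p_{i,j}(n)}\mathbb{E}(f_i\mid\mathcal{Y}) \to 0
\]
in $L^2$. Telescope as in \eqref{eq:18} so that it suffices to treat one $f_i$ with $\mathbb{E}(f_i\mid\mathcal{Y})=0$. Then use the van der Corput lemma repeatedly: differencing $n\mapsto n+h$ replaces the polynomial family by a family of lower weight in PET order. Relative weak mixing of the fibre product $X\times_{\mathcal{Y}}X$ keeps the hypothesis $\mathbb{E}(\cdot\mid\mathcal{Y})=0$ alive along the induction. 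The base case is linear, where it is the relative mean ergodic theorem for the weak mixing extension. With this convergence in hand, the averages of $\mu(A\cap\cdots)$ are bounded below by $\int \prod_{i=1}^k \prod_{j=1}^t T_j^{p_{i,j}(n)}\mathbb{E}(\mathbf{1}_A\mid\mathcal{Y})$, which is positive by SZ of $\mathcal{Y}$ applied to a set where $\mathbb{E}(\mathbf{1}_A\mid\mathcal{Y})>\delta$.
\end{enumerate}
Steps (iii) and (iv) contradict the maximality of $\mathcal{Y}$, so $\mathcal{Y}=X$ and the theorem follows.

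\textbf{Main obstacle.} I expect the PET induction in step (iv) to be the hardest part. One has to set up a well-founded weight (ordering) on finite families of $\Z^t$-valued polynomial maps $n\mapsto \prod_j T_j^{p_{i,j}(n)}$, and check that each van der Corput step strictly decreases it. One must also keep essential distinctness: differences of distinct maps must remain nonconstant, which is exactly where vanishing at $0$ and distinctness of the maps are used. In addition, the van der Corput sums must be handled so that the liminf, and not just a limit along subsequences, is controlled.
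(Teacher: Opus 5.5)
The paper does not prove this statement. It is quoted from \cite{Bergelson_Leibman96} and used only through Corollary \ref{Bergelson Leibman tensor product}, where $T_j$ acts on the $j$-th coordinate of $X^t$. So the case of several genuinely different commuting maps is exactly the one that matters. Your outline follows the Furstenberg--Katznelson/Bergelson--Leibman structure-theoretic route, and for a single transformation its architecture is the standard one. For $t\ge 2$, however, steps (ii)--(iv) do not fit together: neither branch of your dichotomy supplies the hypothesis that the corresponding lifting step uses.

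Write $g_i(n)=\prod_{j=1}^t T_j^{p_{i,j}(n)}$.
\begin{itemize}
\item Step (iii) needs fibrewise finite-rank modules invariant under all the $g_i(n)$, not under one group element.
\item Step (iv) needs relative weak mixing for every nonzero element produced by the van der Corput differencing. This does not follow from relative weak mixing of the $\Z^t$-action, or of some subgroup.
\end{itemize}
For instance, let $\mathcal{Y}$ be trivial, $T_1$ weakly mixing, $T_2=\mathrm{Id}$, and take the family $g_1(n)=T_1^n$, $g_2(n)=T_2^n$, $g_3(n)=T_2^{2n}$. The system is compact for $T_2$ but not for $T_1$, so (iii) does not apply. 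The $\Z^2$-action is weakly mixing and has no nontrivial compact factor, yet the conclusion of (iv) fails: for $0<\mu(A)<1$ one has $\frac1N\sum_{n=1}^N T_1^n\mathbf{1}_A\cdot T_2^n\mathbf{1}_A\cdot T_2^{2n}\mathbf{1}_A\to\mu(A)\mathbf{1}_A$ in $L^2(\mu)$, not $\mu(A)^3$.

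What is missing is the primitive-extension step. Furstenberg and Katznelson show that every nontrivial extension $X\to\mathcal{Y}$ of $\Z^t$-systems has a nontrivial intermediate factor $\mathcal{Z}$ and a splitting $\Z^t=\Gamma_c\oplus\Gamma_w$ such that $\mathcal{Z}$ is compact over $\mathcal{Y}$ for $\Gamma_c$ and relatively weak mixing over $\mathcal{Y}$ for every nonzero element of $\Gamma_w$. The example above is of this type, with $\Gamma_c=\langle T_2\rangle$ and $\Gamma_w=\langle T_1\rangle$. This is the correct form of your (ii).

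You then need a lemma lifting the SZ property through such a mixed extension, and it cannot be assembled from separate compact and weak mixing lemmas. Roughly:
\begin{itemize}
\item write $g_i(n)=c_i(n)w_i(n)$, with polynomial maps $c_i$ into $\Gamma_c$ and $w_i$ into $\Gamma_w$;
\item use a relative PET/van der Corput argument along the $w_i$ to decouple indices with different $\Gamma_w$-components; this must be uniform over the fibrewise precompact $\Gamma_c$-orbits, since the $\Gamma_c$-parts vary with $n$;
\item run the polynomial van der Waerden argument (colouring $\Gamma_c$ rather than $\Z$) among indices with the same $\Gamma_w$-component.
\end{itemize}
This combined step, more than the PET bookkeeping you flag, is where the multidimensional case genuinely differs, and it is the heart of the Bergelson--Leibman proof.

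A smaller point concerns (i): the union bound must be taken conditionally on $\mathcal{Y}_\alpha$. Choose $A'\subset\{\E(\mathbf{1}_A\mid\mathcal{Y}_\alpha)>1-\varepsilon\}$; then, with $g_0=\mathrm{Id}$, one gets $\mu\bigl(\bigcap_{i=0}^k g_i(n)^{-1}A\bigr)\ge(1-(k+1)\varepsilon)\,\mu\bigl(\bigcap_{i=0}^k g_i(n)^{-1}A'\bigr)$. A global estimate $\mu(A'\setminus A)\le\varepsilon\mu(A')$ is not enough, since the SZ lower bound for $A'$ can be far smaller than $\varepsilon\mu(A')$.
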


If $(X,\mu,T)$ is a measure preserving system, and $t\in \N$, letting $T_j=Id \times \cdots \times Id \times T \times Id \times \cdots \times Id$, be maps on $X^t$, where $T$ appears in the $j$-th coordinate, and letting $A=A_1\times \cdots \times A_t$, with $\mu(A_i)>0$, for each $i=1,\ldots,t$, we recover the following immediate corollary. 

\begin{corollary}\label{Bergelson Leibman tensor product}
Let $(X,\mu,T)$ be a measure preserving system, let $p_{i,j}$, $1\leq i\leq k$, $1\leq j\leq t$, be polynomials with rational coefficients taking on integer values and vanishing at $0$, and let $A_1,\ldots,A_t\subset X$ with $\mu(A_i)>0$, for each $i=1,\ldots,t$. Then, if $\mu^{(t)}=\mu \times \cdots \times \mu$, with the product taken $t$-times, we have
$$\liminf_{N\to \infty}\frac{1}{N}\sum_{n=1}^N \mu^{(t)}\left( A \cap \prod_{j=1}^t T^{-p_{1,j}(n)}A_j\cap \cdots \cap \prod_{j=1}^t T^{-p_{k,j}(n)}A_j  \right)>0, $$
where $A=A_1\times \cdots \times A_t$.
\end{corollary}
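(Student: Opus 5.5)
The plan is to deduce Corollary~\ref{Bergelson Leibman tensor product} directly from Theorem~\ref{Bergelson-Leibman}, applied to a suitable product system in which the commuting transformations act on separate coordinates. Consider the probability space $(X^t,\mathcal{B}^{\otimes t},\mu^{(t)})$, where $\mathcal{B}^{\otimes t}$ is the product $\sigma$-algebra (which is the Borel $\sigma$-algebra of $X^t$, since $X$ is compact metric). For $j=1,\ldots,t$ define $T_j=\mathrm{Id}\times\cdots\times\mathrm{Id}\times T\times\mathrm{Id}\times\cdots\times\mathrm{Id}$, with $T$ in the $j$-th coordinate. I will then check the hypotheses of Theorem~\ref{Bergelson-Leibman} in turn.

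First, each $T_j$ is invertible, since $T$ is a homeomorphism. Second, the $T_j$ pairwise commute, since they act on different coordinates. Third, each $T_j$ preserves $\mu^{(t)}$. This holds because $T_j^{-1}(B_1\times\cdots\times B_t)$ is again a product set whose measure is $\mu(B_1)\cdots\mu(T^{-1}B_j)\cdots\mu(B_t)=\mu^{(t)}(B_1\times\cdots\times B_t)$. Measurable rectangles form a $\pi$-system generating $\mathcal{B}^{\otimes t}$, so equality on rectangles extends to all measurable sets by the $\pi$--$\lambda$ theorem. The set $A=A_1\times\cdots\times A_t$ is measurable with $\mu^{(t)}(A)=\prod_i\mu(A_i)>0$. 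The polynomials $p_{i,j}$ satisfy exactly the hypotheses required in Theorem~\ref{Bergelson-Leibman}.

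The only real computation is to identify the sets appearing in the theorem. For each $i$ and $n$, the commuting maps compose coordinatewise:
$$\prod_{j=1}^t T_j^{-p_{i,j}(n)}=T^{-p_{i,1}(n)}\times\cdots\times T^{-p_{i,t}(n)}.$$
Applying this to $A$ gives
$$\prod_{j=1}^t T_j^{-p_{i,j}(n)}A=T^{-p_{i,1}(n)}A_1\times\cdots\times T^{-p_{i,t}(n)}A_t.$$
This is precisely the Cartesian product written as $\prod_{j=1}^t T^{-p_{i,j}(n)}A_j$ in the statement of the corollary. Substituting these identities into the conclusion of Theorem~\ref{Bergelson-Leibman} yields the claimed positive $\liminf$.

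There is no genuine obstacle here. The only points needing care are notational: the product symbol means composition of maps in the theorem but a Cartesian product of sets in the corollary, and one must confirm that measure preservation extends from rectangles to the whole product $\sigma$-algebra.
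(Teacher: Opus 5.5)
Your proposal is correct and follows the paper's argument: the paper also applies Theorem~\ref{Bergelson-Leibman} on $(X^t,\mu^{(t)})$ with $T_j$ acting as $T$ in the $j$-th coordinate and $A=A_1\times\cdots\times A_t$. The composition $\prod_j T_j^{-p_{i,j}(n)}A$ is then identified with the Cartesian product of the sets $T^{-p_{i,j}(n)}A_j$, exactly as you do. You also check commutation, invertibility and measure preservation on the product, which the paper leaves implicit.
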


\begin{remark*}
We stress the difference in the use of $\prod_{j=1}^t$ between Theorem \ref{Bergelson-Leibman} and Corollary \ref{Bergelson Leibman tensor product}. In the former, it stands for the composition of functions, and in the latter, for the tensor product of sets. This distinction should be clear from context in the rest of this section.
\end{remark*}

Before proving Theorem \ref{k factor is characteristic} we 
recall the following result established in the proof of \cite[Theorem 3.6]{Ye_Yu} (see 
also \cite[Theorem 5.9]{Glasner_Huang_Shao_Ye_regionally_arithmetic_prog_nil:2020}). We give the proof with slightly more details than \cite{Ye_Yu} for the sake of completeness. 

Given a minimal infinite-step pronilsystem, $(X_{\infty},T)$, for each $k\in \N$, let $\pi_k:X_{\infty}\to X_k$ denote the factor map to its maximal $k$-step pronilfactor. Let $\mu$ be the unique invariant measure on $(X_{\infty},T)$. It is well-known (see, for example, \cite[Lemma 8.3]{Dong_Donoso_Maass_Shao_Ye_infinite_step_nil:2013}) that in this setting, $X_k=Z_k$, where $(Z_k,\mathcal{Z}_k,m_k,T)$ is the maximal $k$-step pronilfactor of $(X_{\infty},\mathcal{X}_{\infty},\mu,T)$. We can therefore consider $\pi_k$ as the continuous factor map between the measure preserving systems $(X_{\infty},\mathcal{X}_{\infty},\mu,T)$ and  $(Z_k,\mathcal{Z}_k,m_k,T)$. Note, moreover, that $\pi_k$ is an open map because it is also a topological factor map between the minimial distal systems $(X_{\infty},T)$ and $(X_k,T)$. Consider the disintegration of $\mu$ over $\pi_k$ and write $\mu=\int_{Z_k} \mu_z\ dm_k(z)$ (see, for example, \cite[Theorem 5.14]{Einsiedler_Ward11}). For $d\in \N$, let also 
$$R^{(d)}_{\pi_k}=R^{(d)}_{\pi_k}(X_{\infty}):=\{(x_1,\ldots,x_d)\in X_{\infty}^d: \pi_k(x_1)=\cdots =\pi_k(x_d) \}$$
and consider the measure
$$\lambda^d_k=\int_{Z_k} \prod_{j=1}^d \mu_z\ dm_k(z).$$

\begin{proposition}\label{support trick}
In the above context, we have that $\supp(\lambda^d_k)=R^{(d)}_{\pi_k}$.    
\end{proposition}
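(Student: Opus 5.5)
We prove the two inclusions $\supp(\lambda^d_k)\subset R^{(d)}_{\pi_k}$ and $R^{(d)}_{\pi_k}\subset \supp(\lambda^d_k)$ separately. The main tool is the fact that, in this distal setting, the disintegration $z\mapsto \mu_z$ can be chosen with $\supp(\mu_z)=\pi_k^{-1}(z)$ for every $z\in Z_k$, and so that $z\mapsto\mu_z$ is continuous in the weak$^*$ topology. Indeed, $(X_{\infty},T)\to (X_k,T)$ is a factor map between minimal distal systems, hence open. By the Furstenberg structure theory for distal extensions (or directly from the homogeneous structure: $X_\infty$ fibres over $X_k$ with fibres being orbits of a compact group action commuting appropriately), one can take $\mu_z$ to be the Haar-type measure on the fibre. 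Concretely, I would first try the following: the map $z\mapsto \mu_z$ is $T$-equivariant ($T_*\mu_z=\mu_{Tz}$), and one shows it can be made continuous using openness of $\pi_k$ together with unique ergodicity of $X_\infty$. Then $\supp(\mu_z)$ is a closed set varying lower semicontinuously and equivariantly. Minimality of $X_\infty$ then forces $\supp(\mu_z)=\pi_k^{-1}(z)$: the set $\bigcup_z \supp\mu_z$ is closed, invariant and nonempty.

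For the inclusion $\supp(\lambda^d_k)\subset R^{(d)}_{\pi_k}$, note that $R^{(d)}_{\pi_k}$ is closed, since $\pi_k$ is continuous. Moreover, each $\prod_{j}\mu_z$ is supported on $(\pi_k^{-1}(z))^d\subset R^{(d)}_{\pi_k}$ for $m_k$-a.e. $z$. Hence $\lambda^d_k(R^{(d)}_{\pi_k})=1$, and the inclusion follows. This direction is routine.

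For the reverse inclusion, let $(x_1,\ldots,x_d)\in R^{(d)}_{\pi_k}$ with $\pi_k(x_j)=z_0$, and let $W_1\times\cdots\times W_d$ be a basic neighbourhood of it. We must show
$$\lambda^d_k(W_1\times\cdots\times W_d)=\int_{Z_k}\prod_{j=1}^d \mu_z(W_j)\,dm_k(z)>0.$$
Since $x_j\in\supp(\mu_{z_0})$, we have $\mu_{z_0}(W_j)>0$ for each $j$. By lower semicontinuity of $z\mapsto\mu_z(W_j)$ for open $W_j$, which follows from weak$^*$ continuity, each $\mu_z(W_j)>c>0$ for $z$ in a neighbourhood $V$ of $z_0$. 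Since $m_k$ has full support on the minimal system $X_k$, we get $m_k(V)>0$, and the integral is at least $c^d m_k(V)>0$.

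The main obstacle is justifying the continuous choice of the disintegration with full fibre supports. If a direct argument is awkward, I would instead use the following alternative, which avoids continuity. Suppose $U=W_1\times\cdots\times W_d$ meets $R^{(d)}_{\pi_k}$ but $\lambda^d_k(U)=0$. Then for $m_k$-a.e. $z$ some $\mu_z(W_j)=0$. Openness of $\pi_k$ gives that $\pi_k(W_1)\cap\cdots\cap\pi_k(W_d)$ is open and nonempty, hence of positive $m_k$-measure. On the other hand, $\mu(W_j)=\int_{\pi_k(W_j)}\mu_z(W_j)\,dm_k>0$. Shrinking the $W_j$ and using distality, namely that the fibre measures are invariant under the fibre group so that $\mu_z(W_j)>0$ exactly when $z\in\pi_k(W_j)$ up to null sets, yields a contradiction. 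This last equivalence is the crux, and I would derive it from the nilmanifold description of $X_\infty\to X_k$ as a quotient by a compact abelian-by-nilpotent fibre structure, passing through finite-step nilsystem approximations and inverse limits.
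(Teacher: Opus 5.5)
\textbf{There is a genuine gap: the reverse inclusion rests on a fact you never establish.}

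Your first inclusion is fine and coincides with the paper's.

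\textbf{Main route.} This rests on a claim you do not establish: a weak$^*$-continuous version of $z\mapsto\mu_z$ with $\supp(\mu_z)=\pi_k^{-1}(z)$ for \emph{every} $z\in Z_k$. The disintegration is only determined $m_k$-a.e. Saying that openness of $\pi_k$ and unique ergodicity make it continuous is an assertion, not an argument.

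Moreover, your derivation of full supports from minimality fails even if continuity is granted. Weak$^*$ continuity only makes $z\mapsto\supp(\mu_z)$ lower semicontinuous, so $\bigcup_z\supp(\mu_z)$ need not be closed. For instance, with $(1-\tfrac1n)\delta_a+\tfrac1n\delta_b\to\delta_a$, points of the approximating supports escape the limiting support. Closedness of that union is in fact equivalent to the very full-support statement you are trying to derive.

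The strong claim is true for pronilsystems, but only via their structure. One would take Haar measures on the nilmanifold fibres of the nilsystem approximations and then run an inverse-limit argument, which you do not carry out.

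\textbf{Fallback route.} This has the right shape and needs much less: only that $\supp(\mu_z)=\pi_k^{-1}(z)$ for $m_k$-a.e.\ $z$. Given this, take a basic neighbourhood $W_1\times\cdots\times W_d$ of a point of $R^{(d)}_{\pi_k}$. The set $\bigcap_j\pi_k(W_j)$ is open (since $\pi_k$ is open) and nonempty, hence has positive $m_k$-measure. Almost every $z$ in it satisfies $\mu_z(W_j)>0$ for all $j$, so $\lambda^d_k(W_1\times\cdots\times W_d)>0$ directly; no contradiction or shrinking is needed.

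However, this a.e.\ statement, which you correctly call the crux, is left as a vague plan. The extension $X_\infty\to X_k$ is in general an inverse limit of a tower of compact abelian group extensions, not a single group extension. So ``invariance under the fibre group'' has to be made precise.

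This a.e.\ statement is exactly the input the paper takes from \cite[Proposition~5.5]{Glasner_Huang_Shao_Ye_regionally_arithmetic_prog_nil:2020}. With that citation your fallback is a complete proof, and slightly more direct than the paper's. The paper instead deduces $\prod_{j=1}^d\pi_k^{-1}(z)\subset\supp(\lambda^d_k)$ for a.e.\ $z$ from $\lambda^d_k(\supp(\lambda^d_k))=1$. It then closes up using density of full-measure sets together with Hausdorff continuity of $z\mapsto\pi_k^{-1}(z)$, which is the same openness of $\pi_k$ that you use.
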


\begin{proof}
To see that $\supp(\lambda^d_k) \subset R^{(d)}_{\pi_k}$, 
observe 
that $R^{(d)}_{\pi_k}$ is a closed subset of $X_{\infty}^d$ with $\lambda^d_k(R^{(d)}_{\pi_k})=1$. Indeed, this follows directly from the fact that for 
$m_k$-a.e. $z\in Z_k$, $\mu_z(\pi_k^{-1}(\{z\}))=1$ and the fact that for $(x_1,\ldots,x_k)\in R^{(d)}_{\pi_k}$ there is $z\in Z_k$ such that $(x_1,\ldots,x_k)\in \pi_k^{-1}(\{z\})$. 

For the converse, \cite[Proposition 5.5]{Glasner_Huang_Shao_Ye_regionally_arithmetic_prog_nil:2020} shows that there exists a set $Y\subset Z_k$ with $m_k(Y)=1$ and such that for each $z\in Y$, $\supp(\mu_z)=\pi_k^{-1}(\{z\})$. It follows by the definition of $\lambda^d_k$ that $\prod_{j=1}^d \mu_z(\supp(\lambda^d_k))=1$, for $m_k$-a.e. $z\in Z_k$. Thus, from the definition of the support, this implies that $\prod_{j=1}^d \supp(\mu_z) \subset \supp(\lambda^d_k)$, for $m_k$-a.e. $z\in Z$. Combining these two observations we see that 
$$\prod_{j=1}^d \pi_k^{-1}(\{z\}) \subset \supp(\lambda^d_k),$$
for $m_k$-a.e. $z\in Z$. Observe that, since $(Z_k,T)$ is minimal and uniquely ergodic (strictly ergodic), $m_k$ is fully supported, and therefore, sets of full $m_k$-measure are dense in $Z_k$. 

    Finally, we observe that distality of the factor map $\pi_k:X_{\infty}\to Z_k$ -- which follows trivially from the fact that $X_{\infty}$ is distal -- means that the map $z\mapsto \pi_k^{-1}(\{z\})$ is continuous as a map from $Z_k$ to the space of nonempty closed subsets of $X_{\infty}$ endowed with the Hausdorff topology. 
    
     As $ R^{(d)}_{\pi_k}= \bigcup_{z\in Z_k} \prod_{j=1}^d \pi_k^{-1}(\{z\})$ and $\supp(\lambda^d_k)$ is a closed set that contains a dense subset of $R^{(d)}_{\pi_k}$, it follows that $R^{(d)}_{\pi_k} \subset \supp(\lambda^d_k)$, concluding the proof.  
\end{proof}

We can now prove the key intermediate result, namely Theorem 
\ref{key result}. As explained before Proposition 
\ref{support trick}, the measurable and topological $k$-step 
pronilfactors, $Z_k$ and $X_k$, of $X_{\infty}$ are the same, 
and so we use them interchangeably in the proof that follows.

\begin{proof}[Proof of Theorem \ref{k factor is characteristic}]
In the notation and assumptions of Theorem \ref{k factor is characteristic} it suffices to show that 
\begin{equation}\label{positive averages}
\limsup_{N\to \infty} \frac{1}{N}\sum_{n=1}^N \prod_{j=1}^m \mu(U_{j,0}\cap T^{-p_{j,1}(n)}U_{j,1}\cap \cdots \cap T^{-p_{j,d}(n)}U_{j,d})>0.  
\end{equation}
We shall use Theorem \ref{k+1 factor is characteristic for ergodic averages} in order to show that the averages 
$$\limsup_{N\to \infty} \frac{1}{N}\sum_{n=1}^N \prod_{j=1}^m \mu(U_{j,0}\cap T^{-p_{j,1}(n)}U_{j,1}\cap \cdots \cap T^{-p_{j,d}(n)}U_{j,d}) $$
are equal to 
$$\limsup_{N\to \infty} \frac{1}{N}\sum_{n=1}^N \prod_{j=1}^m \int_{Z_{k}} \mathbb{E}(\mathrm{1}_{U_{j,0}} | \mathcal{Z}_{k})(z) \cdot \mathbb{E}(\mathrm{1}_{U_1} | \mathcal{Z}_{k})(T^{p_{j,1}(n)}z)\cdots \mathbb{E}(\mathrm{1}_{U_{j,d}} | \mathcal{Z}_{k})(T^{p_{j,d}(n)}z)\ dm_{k}(z),$$
where $k=s-1$, with $s$ being the factor guaranteed by the aforementioned theorem. To this end, putting $$a_j(n)=\mu(U_{j,0}\cap T^{-p_{j,1}(n)}U_{j,1}\cap \cdots \cap T^{-p_{j,d}(n)}U_{j,d}) $$ 
and 
$$b_j(n)=\int_{Z_{k}} \mathbb{E}(\mathrm{1}_{U_{j,0}} | \mathcal{Z}_{k})(z) \cdot \mathbb{E}(\mathrm{1}_{U_{j,1}} | \mathcal{Z}_{k})(T^{p_{j,1}(n)}z)\cdots \mathbb{E}(\mathrm{1}_{U_{j,d}} | \mathcal{Z}_{k})(T^{p_{j,d}(n)}z)\ dm_{k}(z),$$
for $j=1,\ldots,m$, which are $1$-bounded sequences, 
our claim follows from the telescoping trick used in \eqref{eq:18} and the fact that
$$\lim_{N\to \infty} \frac{1}{N}\sum_{n=1}^N \left| a_j(n)-b_j(n) \right|=0,$$
for each $j=1,\ldots,m$. This in turn is a direct consequence of \eqref{uniformity of orthocomplement}, Theorem 
\ref{k+1 factor is characteristic for ergodic averages}, the triangle inequality and the trick used in \eqref{eq:18} once more. We thus see that in order to establish \eqref{positive averages} it suffices to show that 
\begin{equation}\label{positive averages in k+1 nilfactor}
\limsup_{N\to \infty} \frac{1}{N}\sum_{n=1}^N \prod_{j=1}^m \int_{Z_{k}} \mathbb{E}(\mathrm{1}_{U_{j,0}} | \mathcal{Z}_{k})(z) \cdot \mathbb{E}(\mathrm{1}_{U_{j,1}} | \mathcal{Z}_{k})(T^{p_{j,1}(n)}z)\cdots \mathbb{E}(\mathrm{1}_{U_{j,d}} | \mathcal{Z}_{k})(T^{p_{j,d}(n)}z)\ dm_{k}(z)>0.    
\end{equation}
To this end, since 
$$\pi_k(U_{j,0}) \cap \cdots \cap \pi_k(U_{j,d}) \neq \emptyset, $$
    we consider, for each $j=1,\ldots,m$, some point $x_j\in X_{k}=Z_{k}$ for which there exist $(y_{j,0},\ldots,y_{j,d})\in U_{j,0}\times \cdots \times U_{j,d}$ such that $\pi_{k}(y_{j,0})=\cdots=\pi_{k}(y_{j,d})=x_j$. In particular, 
$$(y_{j,0},\ldots,y_{j,d}) \in R^{(d+1)}_{\pi_{k}} \cap (U_{j,0}\times \cdots \times U_{j,d}).$$
Using Lemma \ref{support trick} this means that $(y_{j,0},\ldots,y_{j,d}) \in \supp(\lambda^{d+1}_{k})$, which, by definition, implies that 
$$\lambda^{d+1}_{k}(U_{j,0}\times \cdots \times U_{j,d})= \int_{Z_{k}} \mathbb{E}(\mathrm{1}_{U_{j,0}} | \mathcal{Z}_{k})(z) \cdot \mathbb{E}(\mathrm{1}_{U_{j,1}} | \mathcal{Z}_{k})(z)\cdots \mathbb{E}(\mathrm{1}_{U_{j,d}} | \mathcal{Z}_{k})(z)\ dm_{k}(z)>0.$$
We can therefore find $a_j>0$ and define 
$$A_j=A_{a_j}:=\{z\in Z_{k}: \mathbb{E}(\mathrm{1}_{U_{j,0}} | \mathcal{Z}_{k})(z) > a_j, \ldots, \mathbb{E}(\mathrm{1}_{U_{j,d}} | \mathcal{Z}_{k})(z)>a_j \}$$
so that $m_{k}(A_j)>0$. This is possible because 
$\mathbb{E}(\mathrm{1}_{U_{j,i}} | \mathcal{Z}_{k})\leq 1$ 
a.e. for each $0\leq i \leq d$ and $1\leq j\leq m$, and by choosing sufficiently small $a_j>0$, in view of the inequalities
\begin{align*}
    0 &<  \int_{Z_{k}} \mathbb{E}(\mathrm{1}_{U_{j,0}} | \mathcal{Z}_{k})(z) \cdot \mathbb{E}(\mathrm{1}_{U_{j,1}} | \mathcal{Z}_{k})(z)\cdots \mathbb{E}(\mathrm{1}_{U_{j,d}} | \mathcal{Z}_{k})(z)\ dm_{k}(z) \\ 
& = \int_{A_{a_j}} \mathbb{E}(\mathrm{1}_{U_{j,0}} | \mathcal{Z}_{k})(z) \cdot \mathbb{E}(\mathrm{1}_{U_{j,1}} | \mathcal{Z}_{k})(z)\cdots \mathbb{E}(\mathrm{1}_{U_{j,d}} | \mathcal{Z}_{k})(z)\ dm_{k}(z) \\
& + \int_{Z_{k}\setminus A_{a_j}} \mathbb{E}(\mathrm{1}_{U_{j,0}} | \mathcal{Z}_{k})(z) \cdot \mathbb{E}(\mathrm{1}_{U_{j,1}} | \mathcal{Z}_{k})(z)\cdots \mathbb{E}(\mathrm{1}_{U_{j,d}} | \mathcal{Z}_{k})(z)\ dm_{k}(z) \\ & \leq m_k(A_{a_j})+(1-m_k(A_{a_j}))a_j.
\end{align*} 
It therefore follows that
\begin{align*}
& \limsup_{N\to \infty} \frac{1}{N}\sum_{n=1}^N \prod_{j=1}^m \int_{Z_{k}} \mathbb{E}(\mathrm{1}_{U_{j,0}} | \mathcal{Z}_{k})(z) \cdot \mathbb{E}(\mathrm{1}_{U_{j,1}} | \mathcal{Z}_{k})(T^{p_{j,1}(n)}z)\cdots \mathbb{E}(\mathrm{1}_{U_{j,d}} | \mathcal{Z}_{k})(T^{p_{j,d}(n)}z)\ dm_{k}(z) \\ & \geq  
\liminf_{N\to \infty} \frac{1}{N}\sum_{n=1}^N \prod_{j=1}^m a_j^{d+1}  \int_{Z_{k}} \mathrm{1}_{A_j}(z) \cdot \mathrm{1}_{A_j}(T^{p_{j,1}(n)}z)\cdots \mathrm{1}_{A_j}(T^{p_{j,d}(n)}z)\ dm_{k}(z) \\ & =
\liminf_{N\to \infty} \frac{1}{N}\sum_{n=1}^N \left( \prod_{j=1}^m a_j^{d+1} \right) m_{k}^{(m)} \left(A \cap T^{-p_1(n)}A \cap \cdots \cap T^{-p_d(n)} A \right)>0,
\end{align*}
where $A=A_1 \times \cdots \times A_m$ and $T^{-p_j(n)}=T^{-p_{j,1}(n)} \times \cdots \times T^{-p_{j,d}}(n)$, for $j=1,\ldots,m$. The latter inequality follows from Corollary \ref{Bergelson Leibman tensor product}, since $m_k^{(m)}(A)=\prod_{j=1}^m m_k(A_j)>0$.
\end{proof}

\subsection{Proof of Theorem \ref{finite step characteristic for infinite step systems}}\label{final proof}

In this final part of the section we conclude the proof of 
Theorem \ref{finite step characteristic for infinite step systems}. 

\begin{proof}[Proof of Theorem \ref{finite step characteristic for infinite step systems}]
Let $(X_{\infty},T)$ be a minimal infinite-step pronilsystem and let $U_1,\ldots,U_d\subset X_{\infty}$ be nonempty open sets. 
Given a tuple of essentially distinct polynomials $p=(p_1,\ldots,p_d)\in \Z[x]^d$,
let $k$ be the integer guaranteed by 
Theorem \ref{k factor is characteristic} with 
$s=\max\{\deg(p_1),\ldots,\deg(p_d)\}$. To avoid confusion, 
we stress that the integer $k$ depends only on $d$ and $s$, 
and even though there is no assumption about the value of the polynomials in $p\in \Z[x]^d$ at $0$, we will apply Theorem \ref{k factor is characteristic} below to other polynomial families which vanish at $0$.

Let $A=R_p(\pi_k U_1,\ldots,\pi_k U_d)$ and $B=R_p(U_1,\ldots,U_d)$, where $\pi_k: X_{\infty}\to X_k$ denotes the factor map onto the maximal $k$-step pronilfactor. Clearly $B\subset A$ and therefore, by Lemma \ref{A-B pws}, it suffices to show that if $B\neq \emptyset$, then $\bigcap_{n\in F}B-f$ is syndetic for any finite $F\subset A$. Note that, by Theorem \ref{Ye Yu} (but this also follows by a special case of Theorem \ref{k factor is characteristic}) we see that when $A\neq \emptyset$, then $B\neq \emptyset$ as well, which makes the result in the case $B=\emptyset$ trivially true.

Now, let $F=\{a_1,\ldots,a_m\}\subset A$. By definition and 
the fact that $\pi_k$ is a factor map, this means that 
$$\pi_k (T^{-p_1(a_j)}U_1) \cap \cdots \cap \pi_k(T^{-p_d(a_j)}U_d) \neq \emptyset,$$
for each $j=1,\ldots,m.$
We now consider the polynomial tuples $p'_j=(p_{j,1},\ldots,p_{j,d}) \in \Z[x]^d$, where for each $j\in \{1,\ldots,m\}$ and $i\in \{1,\ldots,d\}$, we let $p_{j,i}(n)=p_i(n+a_j)-p_i(a_j)$. Since $p_1,\ldots,p_d$ are essentially distinct with degree at most $s$, we see that $p'_j$ is a tuple of essentially distinct polynomials vanishing at $0$ of maximum degree at most $s$. Hence, Theorem \ref{k factor is characteristic} implies that there exists $n_0\in \N$ such that 
\begin{equation}\label{same return time in proof}
 V_{j}:= \bigcap_{i=1}^d V_{j,i}:=\bigcap_{i=1}^d T^{-p_i(n_0+a_j)}U_i=\bigcap_{i=1}^d T^{-p_{j,i}(n_0)}(T^{-p_i(a_j)}U_i) \neq \emptyset,    
\end{equation}
for each $j=1,\ldots,m$. We next consider new tuples of polynomials $q_j=(q_{j,1},\ldots,q_{j,d})\in \Z[x]^d$, defined as 
$$q_{j,i}(n)=p_{j,i}(n+n_0+a_j)-p_{j,i}(n_0+a_j)=p_i(n+n_0+a_j)-p_i(n_0+a_j),\ \text{for}\ j=1\,\ldots,m\ \text{and}\ i=1,\ldots,d,$$
and observe that 
once again each tuple $q_j$ consists of essentially distinct 
polynomials vanishing at $0$. We then choose $\ell_1,\ldots,\ell_m\in \N$ so that the polynomials in the tuple
$$q=(q_{1,1}+\ell_1x,\ldots,q_{1,d}+\ell_1x, q_{2,1}+\ell_2x,\ldots,q_{2,d}+\ell_2x,\ldots,q_{m,1}+\ell_mx,\ldots,q_{m,d}+\ell_mx)\in \Z[x]^{d\cdot m} $$
are essentially distinct. 

From \eqref{same return time in proof}, by minimality, and 
since each $V_{j}$ is an open set, we can find $r_1,\ldots,r_m\in \N$ such that  
$$\bigcap_{j=1}^m T^{-r_j}V_j= \bigcap_{j=1}^m \bigcap_{i=1}^d T^{-r_j}V_{j,i}=\bigcap_{j=1}^m \bigcap_{i=1}^d T^{-p_i(n_0+a_j)}(T^{-r_j}U_i) \neq \emptyset.$$
In order to show that 
$$\bigcap_{j=1}^m B-a_j=\bigcap_{j=1}^m R_p(U_1,\ldots,U_d)-a_j$$
is syndetic we first show that it contains the set
$$R_q (T^{-r_1}V_{1,1},\ldots,T^{-r_1}V_{1,d},T^{-r_2}V_{2,1},\ldots,T^{-r_2}V_{2,d},\ldots,T^{-r_m}V_{m,1},\ldots,T^{-r_m}V_{m,d})+n_0.$$
Indeed, if $n\in R_q (T^{-r_1}V_{1,1},\ldots,T^{-r_1}V_{1,d},T^{-r_2}V_{2,1},\ldots,T^{-r_2}V_{2,d},\ldots,T^{-r_m}V_{m,1},\ldots,T^{-r_m}V_{m,d})$, then there is $x\in X_{\infty}$ such that for each $j=1,\ldots,m$, and each $i=1,\ldots,d$, we have that 
$$T^{q_{j,i}(n)+\ell_jn}x\in T^{-r_j}V_{j,i}.$$
Unraveling the definitions, this translates to 
$$T^{p_i(n+n_0+a_j)}(T^{\ell_jn+r_j}x)\in U_i,$$
for each $j=1,\ldots,m$ and $i=1,\ldots,d$. But this means that $$n+n_0+a_j\in R_{p}(U_1,\ldots,U_d)$$
for each $j=1,\ldots,m$, which shows the claim.

Finally, since 
$\bigcap_{j=1}^m \bigcap_{i=1}^d T^{-r_j}V_{j,i}$ is a nonempty intersection of open sets and $q\in \Z[x]^{d\cdot m}$ is a tuple of essentially distinct polynomials vanishing at $0$, it follows by Bergelson-McCutcheon's IP polynomial Szemer\'edi theorem \cite[Lemma 6.12]{Bergelson_McCutcheon00}, that 
$$R_q (T^{-r_1}V_{1,1},\ldots,T^{-r_1}V_{1,d},T^{-r_2}V_{2,1},\ldots,T^{-r_2}V_{2,d},\ldots,T^{-r_m}V_{m,1},\ldots,T^{-r_m}V_{m,d})$$
is a syndetic set. Thus, $\bigcap_{j=1}^m B-a_j$, which 
contains a shift of a syndetic set, is also syndetic, and the 
proof is complete. 
\end{proof}

We remark that the optimal integer $k\in \N$ 
appearing in Theorem \ref{k factor is characteristic}, and 
thus in Theorem 
\ref{finite step characteristic for infinite step systems} and Theorem \ref{Main theorem} -- at least following our presentation and 
proof -- depends implicitly on the polynomial tuple, in the 
sense that it only depends on the number of 
polynomials and the maximum degree among them. Observe that 
the optimal $k$ in Theorem \ref{k factor is characteristic} 
is equal to the optimal integer $k$ in 
Theorem \ref{Leibman}. But we stress that in the former, $k$ corresponds to the factor $Z_k$, whereas in the latter, $k$ corresponds to the uniformity seminorm $\nnorm{ \cdot }_{U^k},$ and thus, the characteristic factor there is $Z_{k-1}$.

However, in order to carry out our proofs -- as
becomes evident in the proof of Theorem \ref{finite step characteristic for infinite step systems} -- starting from a polynomial tuple $p=(p_1,\ldots,p_d)\in \Z[x]^d$, the only polynomial tuples we need to control by 
uniformity seminorms, in the sense of Theorem \ref{Leibman}, 
are of the form $p_a=(p_{1,a},\ldots,p_{d,a})\in \Z[x]^d$, 
where $p_{i,a}(n)=p_i(n+a)-p_i(a)$ are the normalised shifts 
of $p_i$. 
One could then optimize the integer in Theorem \ref{Main theorem} by optimizing the 
integer in Theorem \ref{Leibman} by considering the \textit{smallest characteristic factor} (a notion introduced in \cite{Fur_Weiss_96}) 
for ergodic averages of the form 
$$\frac{1}{|\Phi_N|}\sum_{n\in \Phi_N} T^{p_1(n)}f_1 \cdots T^{p_d(n)}f_d.$$
This integer or the factor associated to it, is 
commonly referred to as the 
\textit{Host-Kra complexity of the polynomial tuple $p=(p_1,\ldots,p_d)$}, a term originally introduced in \cite{Bergelson_Leibman_Lesigne}. We also remark that the 
Host-Kra complexity of the tuple $p=(p_1,\ldots,p_d)$ is the 
same as that of any of the tuples $p_{a}$; for shifting 
the polynomial variables $n\mapsto n+a$ does not affect the 
averages by the shift invariance of 
\Folner\ sequences, and the normalization by $p_i(a)$ can be 
absorbed by the functions $f_i$, without affecting 
uniformity. This last remark and the above discussion 
explain why, ultimately, the optimal $k$ in the statement of 
Theorem \ref{Main theorem} is such that $k-1$ is the Host-Kra 
complexity of $p=(p_1,\ldots,p_d)$. 

The problem of finding the true Host-Kra complexity of any 
polynomial tuple is a well-known open problem in the theory 
of multiple ergodic averages (see e.g. \cite{Frantzikinakis08, Frantzikinakis_Kra_averages_indep_poly:2006, Hernandez_Poly,  kuca, Leibman10b}).

\appendix

\section{Proof that Theorem \ref{finite step characteristic for infinite step systems} implies Theorem \ref{Main theorem}} \label{App B}

We 
already explained in Section \ref{reduction sec} 
how we can use Theorem \ref{finite step characteristic for infinite step systems} to 
deduce a similar but slightly weaker result than 
Theorem \ref{Main theorem}. In this appendix, we 
include the details 
of how to recover the full result. The reason for 
presenting this argument here is that it is a bit 
technical, it relies on slight modifications 
of arguments that were carried out in 
\cite{Huang_Shao_Ye} and \cite{Ye_Yu}, and because the
aforementioned weaker result already captures the essence of 
Theorem \ref{Main theorem}.   

\subsection{Notation and a reduction}

Recall that we fix $d\in \N$ and an essentially 
distinct tuple $p\in \Z[x]^d$. Our goal is to 
show that there exists $k\in \N$ such that for all minimal and invertible
topological dynamical systems $(X,T)$ and all nonempty, open 
sets $U_1,\ldots,U_d \subset X$,
\begin{equation}\label{finite set nilfactor eq app}
R_p((\pi_k U_1)^{\circ},\ldots,(\pi_k U_d)^{\circ}) \setminus R_p(U_1,\ldots,U_d)   
\end{equation}
is not piecewise syndetic, where $\pi_k:X\to X_k$ denotes the 
factor map to the maximal $k$-step pronilfactor of $(X,T)$. 

We further let $\phi_j:X_{\infty}\to X_j$ denote 
the factor map from $X_{\infty}$ 
to $X_j$, for each $j\in \N$ and as explained in Section \ref{reduction sec}, $\pi_k=\phi_k \circ \pi_{\infty}$, where $\pi_{\infty}:X\to X_{\infty}$ denotes the factor map onto the infinite-step pronilfactor. 

Just as Theorem \ref{finite step characteristic for infinite step systems} was deduced from Theorem \ref{k factor is characteristic}, in order to verify \eqref{finite set nilfactor eq app}, it suffices to show that for the same $k\in \N$ as in Theorem \ref{k factor is characteristic}, the following holds:

If $U_{j,i}\subset X$, for $i=1,\ldots,d$ and 
$j=1,\ldots,m$, are open sets such that 
$$(\pi_k(U_{j,1}))^{\circ} \cap \cdots \cap (\pi_k(U_{j,d}))^{\circ} \neq \emptyset, $$
then there is $n\in \N$ such that 
\begin{equation}\label{same return time app}
\bigcap_{i=1}^d T^{-p_{j,i}(n)}U_{j,i} \neq \emptyset,    
\end{equation}
for each $j=1,\ldots,m$, where $p_{j,i}$ are polynomials of the form $n\mapsto p_i(n+a_j)-p_i(a_j)$. 

For the rest of the argument, we fix $m\in \N$ 
and polynomials $p_{j,i}$ as above. In the present setting and notation (we highlight the use of three factor maps here, $\pi_{\infty}:X\to X_{\infty}$, $\phi_k:X_{\infty}\to X_k$ and $\pi_k=\phi_k\circ \pi_{\infty}: X\to X_k$), Theorem \ref{k factor is characteristic} says that if $V_{j,i}\subset X_{\infty}$, for $i=0,1,\ldots,d$ and 
$j=1,\ldots,m$, are open sets such that 
$$\phi_k(V_{j,0}) \cap \cdots \cap \phi_k(V_{j,d}) \neq \emptyset, $$
then there is $n\in \N$ such that 
$$V_{j,0} \cap \bigcap_{i=1}^d T^{-p_{j,i}(n)}V_{j,i} \neq \emptyset,$$
for each $j=1,\ldots,m$.

\subsection{A consequence of Theorem \ref{k factor is characteristic}}

As was remarked in the
end of the proof of \cite[Theorem 3.6]{Ye_Yu} and 
explained in the proof of \cite[Theorem 3.7]{Huang_Shao_Ye}, 
this implies the existence of a dense 
$G_{\delta}$ 
set $\Omega \subset X_{\infty}^m$ for which 
\begin{equation}\label{same return time app 2}(\phi_k^{(m\cdot d)})^{-1}(\phi_k^{(m\cdot d)})(x_1^{(d)},\ldots,x_m^{(d)}) \subset \overline{\{(T^{p_{1,1}(n)}x_1,\ldots,T^{p_{1,d}(n)}x_1,\ldots,T^{p_{m,1}(n)}x_m,\ldots,T^{p_{m,d}(n)}x_m):n\in \Z\}},
\end{equation}
for each $(x_1,\ldots,x_m)\in \Omega$. 

Indeed, it follows from the same proof as that of \cite[Lemma 3.6]{Huang_Shao_Ye} that there is a dense $G_{\delta}$ set $\Omega \subset X_{\infty}^m$ of continuity points $(x_1,\ldots,x_m)$ of the map 
$$X\mapsto 2^{X_{\infty}^{m\cdot d}},\ x\mapsto \overline{\mathcal{O}}_{m}(x_1^{\otimes d},\ldots,x_m^{\otimes d}),$$
where 
$$\overline{\mathcal{O}}_{m}(x_1^{\otimes d},\ldots,x_m^{\otimes d})=\overline{\{(T^{p_{1,1}(n)}x_1,\ldots,T^{p_{1,d}(n)}x_1,\ldots,T^{p_{m,1}(n)}x_m,\ldots,T^{p_{m,d}(n)}x_m):n\in \Z\}}$$
and $2^{X_{\infty}^{m\cdot d}}$ is endowed with the Hausdorff metric (see also \cite[Section 2.4.1]{Huang_Shao_Ye} for details). Then, fixing $(x_1,\ldots,x_m)\in \Omega$, we consider any tuple $$(y_{1,1},\ldots,y_{1,d},\ldots,y_{m,1},\ldots,y_{m,d})\in(\phi_k^{(m\cdot d)})^{-1}(\phi_k^{(m\cdot d)})(x_1^{(d)},\ldots,x_m^{(d)}),$$
and for any $\epsilon$ take open balls $B_{j,i}$ of radius $\epsilon$ around $y_{j,i}$. Then, for any balls $B_{j,0}$ of radius $\delta$ (to be determined later) around $x_j$, we have
$$\phi_k(x_j)\in \phi_k(B_{j,0})\cap \phi_k(B_{j,1})\cap \cdots \cap \phi_k(B_{j,d})$$
which implies that there exist $z_j\in B_{j,0}$ and $n\in \Z$ such that
$T^{p_{j,i}(n)}z\in B_{j,i}$, for each 
$i=1,\ldots,d$, $j=1,\ldots,m$. In particular,
$$\overline{\mathcal{O}}_{m}(z_1^{\otimes d},\ldots,z_m^{\otimes d}) \cap \prod_{j=1}^m (B_{j,1}\times \cdots \times B_{j,d}) \neq \emptyset$$
and by the continuity assumption -- taking appropriate $\delta$ with respect to $\epsilon$ --
it holds that
$$\overline{\mathcal{O}}_{m}(x_1^{\otimes d},\ldots,x_m^{\otimes d}) \cap \prod_{j=1}^m (B'_{j,1}\times \cdots \times B'_{j,d}) \neq \emptyset,$$
where $B'_{j,i}$ is a ball of radius $2\epsilon$ 
around $y_{j,i}$. This shows \eqref{same return time app 2}.

\subsection{A similar consequence of Theorem \cite[Theorem 3.4]{GKLMRR}}

A similar argument as above can be used in 
\cite[Theorem 3.4]{GKLMRR} to show that there exists a dense $G_{\delta}$ set 
$\Omega'\subset X^m$, for which 
\begin{equation}\label{same return time app 3}
(\pi_{\infty}^{(m\cdot d)})^{-1}(\pi_{\infty}^{(m\cdot d)})(x_1^{(d)},\ldots,x_m^{(d)}) \subset \overline{\{(T^{p_{1,1}(n)}x_1,\ldots,T^{p_{d,1}(n)}x_1,\ldots,T^{p_{1,m}(n)}x_m,\ldots,T^{p_{d,m}(n)}x_m):n\in \Z\}},
\end{equation}
for each $(x_1,\ldots,x_m)\in \Omega'$. Note 
that, because 
\cite[Theorem 3.4]{GKLMRR} is not stated for $\pi_{\infty}:X\to X_{\infty}$ but it works for a related open map which is a factor between almost 1-1 extensions of $X$ and $X^{\infty}$ -- originating from Veech's $O$-diagram construction \cite{veech_o} -- we also need an argument analogous to the equivalence between \cite[Theorem HSY]{Ye_Yu} and \cite[Theorem 3.1]{Ye_Yu} as in \cite[Lemma 3.2]{Ye_Yu}. 
To explain this, we recall the following commuting diagram used in the proof of \cite[Theorem A]{GKLMRR} from \cite[Theorem 3.4]{GKLMRR}: 
\[
\begin{tikzcd}[row sep=2cm, column sep=2cm]
X \arrow[d, "\pi_{\infty}"'] & X^{*} \arrow[l, "\tau"'] \arrow[d, "\pi^{*}_{\infty}"] \\
X_\infty & X_{\infty}^{*} \arrow[l, "\sigma"]
\end{tikzcd}
\]
Here $\tau$ and $\sigma$ are almost 1-1 factor maps, 
$\pi^{*}_{\infty}$ is an open factor map and $X_{\infty}$ is 
the infinite-step pronilfactor of both $X$ and $X^{*}$, with $\pi:X\to X_{\infty}$ the factor map.
Then, the proof of \cite[Theorem 3.4]{GKLMRR} implies \eqref{same return time app} for $\pi^{*}_{\infty}:X^{*}\to X^{*}_{\infty}$, and the argument of \eqref{same return time app 2} shows that there is a dense 
$G_{\delta}$ 
set $\Omega^{*} \subset {(X_{\infty}^{*})}^{m}$ for which 
\begin{equation}\label{same return time app 3*}((\pi^{*}_{\infty})^{(m\cdot d)})^{-1}((\pi^{*}_{\infty})^{(m\cdot d)})(x_1^{(d)},\ldots,x_m^{(d)}) \subset \overline{\mathcal{O}}_{m}(x_1^{\otimes d},\ldots,x_m^{\otimes d}),
\end{equation}
for each $(x_1,\ldots,x_m)\in \Omega^{*}$. Then, let 
$$\Omega'_1=\{y\in X_{\infty}: |\{\sigma^{-1}(y)\}|=1\}\ \text{and}\ \Omega'_2=\{y\in X: |\{\tau^{-1}(y)\}|=1\},$$
which are dense $G_{\delta}$ subsets by the almost 1-1 
assumption. We let $\Omega'=(\pi^{*}_{\infty})^{m}(\Omega^{*}) \cap \Omega'_2 \cap (\pi_{\infty}^m)^{-1}(\Omega'_1)$, which is a dense $G_{\delta}$ subset of $X^m$. Then \eqref{same return time app 3} follows from \eqref{same return time app 3*} similarly to the argument in \cite[Lemma 3.2]{Ye_Yu}.

\subsection{Combining these consequences and the conclusion}

Combining \eqref{same return time app 2} and 
\eqref{same return time app 3} we deduce, as in 
the proof of \cite[Theorem A]{Ye_Yu}, that there 
exists a dense $G_{\delta}$ set
$\Omega'' \subset X^m$, for which 
\begin{equation}\label{same return time app 4}(\pi_k^{(m\cdot d)})^{-1}(\pi_k^{(m\cdot d)})(x_1^{(d)},\ldots,x_m^{(d)}) \subset \overline{\{(T^{p_{1,1}(n)}x_1,\ldots,T^{p_{d,1}(n)}x_1,\ldots,T^{p_{1,m}(n)}x_m,\ldots,T^{p_{d,m}(n)}x_m):n\in \Z\}},
\end{equation}
for each $(x_1,\ldots,x_m)\in \Omega''$. 

To see this, let $\Omega''' \subset X^m_{\infty}$ be the dense $G_{\delta}$ set of continuity points (see e.g. \cite[Lemma 2.7]{Ye_Yu}) of the map $(y_1,\ldots,y_m)\mapsto (\pi_{\infty}^{(m)})^{-1}(y_1,\ldots,y_m)$ and then define $\Omega_0=\Omega' \cap (\pi_{\infty}^{(m)})^{-1}\left( \Omega \cap \Omega''' \right)$, which is a dense $G_{\delta}$ subset of $X^m$. Finally, it follows from \cite[Lemma 2.5]{Ye_Yu} that there is a dense $G_{\delta}$ set $\Omega_0'$ of $X^m_k$ such that for each $(y_1,\ldots,y_m)\in \Omega_0'$, $(\phi_k^{(m)})^{-1}(y_1,\ldots,y_m) \cap \Omega_0$ is dense $G_{\delta}$ in $(\phi_k^{(m)})^{-1}(y_1,\ldots,y_m)$. Then, $\Omega''=\Omega_0 \cap (\phi_k^{(m)})^{-1}(\Omega_0') \subset X^m$ is a dense $G_{\delta}$ set and one can verify that it satisfies \eqref{same return time app 4} similarly to the argument in the proof of \cite[Theorem A]{Ye_Yu}.

To conclude, suppose that $U_{j,i}\subset X$, for 
$i=0,1,\ldots,d$ and 
$j=1,\ldots,m$, are open sets such that 
$$(\pi_k(U_{j,0}))^{\circ} \cap \cdots \cap (\pi_k(U_{j,d}))^{\circ} \neq \emptyset.$$
Then, as $\Omega''$ is dense, and 
$$\prod_{j=1}^m \pi_k^{-1}\left( (\pi_k(U_{j,0}))^{\circ} \cap \cdots \cap (\pi_k(U_{j,d}))^{\circ} \right) $$
is a nonempty open subset of $X^m$, we can 
find $(x_1,\ldots,x_m)\in \Omega''$ such that $ \pi_k(x_j) \in \pi_k(U_{j,1}) \cap \cdots \cap \pi_k(U_{j,d})$, which means that there exist $$(y_{j,1},\ldots,y_{j,d}) \in (U_{j,1}\times \cdots \times U_{j,d})\ \text{and such that}\ \pi_k(y_{j,1})=\cdots = \pi_k(y_{j,d})=\pi_k(x_j),$$
for each $j=1,\ldots,m$. Applying \eqref{same return time app 4}, we can find $n\in \N$, such that 
$$(T^{p_{1,1}(n)}x_1,\ldots,T^{p_{1,d}(n)}x_1,\ldots,T^{p_{m,1}(n)}x_m,\ldots,T^{p_{m,d}(n)}x_m) \in U_{1,1}\times \cdots \times U_{1,d}\times \cdots \times U_{m,1}\times \cdots \times U_{m,d},$$
which gives \eqref{same return time app}.

\section{Relation between Theorem \ref{Ye Yu} and \cite[Theorem A]{Ye_Yu}} \label{appendix}

In this appendix we include the proof that Theorem 
\ref{Ye Yu} is a consequence of \cite[Theorem A]{Ye_Yu}. We 
stress that neither form of the result is necessary to our 
proofs, but analogous related formulations 
have turned out useful in other settings, e.g. the 
equivalence between Theorem \ref{Qiu} and \cite[Theorem B]{Qiu} utilised in \cite{GKLMRR}. On the other hand, the formulation of Theorem \ref{Ye Yu} naturally suggests the problem of estimating the difference between the return-time sets, which is addressed via Theorem \ref{Main theorem}.

\begin{theorem}{{\cite[Theorem A]{Ye_Yu}}}\label{YeYu original}
Let $(X,T)$ be a minimal and invertible topological dynamical system, $d\in \N$ and $p_1,\ldots,p_d$ be distinct non-constant integral polynomials vanishing at $0$. Then, there are $k\in \N$ (depending only on the
polynomials) and a dense $G_{\delta}$ set $\Omega$ of $X$ such that, for any $x\in \Omega$,
$$(\pi_k^{(d)})^{-1} \pi_k^{(d)} (x^{(d)}) \subset L_x^p:=\overline{\{ (T^{p_1(n)}x, \ldots, T^{p_d(n)}x): n\in \Z \} },$$
where $p=(p_1,\ldots,p_d)$, $X_k$ is the maximal k-step 
pronilfactor of $X$ and $\pi_k : X  \to X_k$ is the factor
map.    
\end{theorem}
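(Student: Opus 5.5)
The plan is to deduce the statement from the machinery already developed in the paper, specialised to $m=1$. Since $p_1,\ldots,p_d$ are distinct and vanish at $0$, the tuple $p$ is essentially distinct. Let $s=\max_i \deg p_i$ and let $k=k(d,s)$ be the integer from Theorem \ref{k factor is characteristic}; this $k$ depends only on the polynomials. The proof then has three stages: first establish the inclusion on the infinite-step pronilfactor, then on $X$ relative to $\pi_\infty$, and finally combine the two.

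\textbf{Step 1: the pronilfactor.} Work on $X_\infty$ with the factor map $\phi_k:X_\infty\to X_k$. By Theorem \ref{k factor is characteristic} with $m=1$, whenever $V_0,\ldots,V_d\subset X_\infty$ are open with $\phi_k(V_0)\cap\cdots\cap\phi_k(V_d)\neq\emptyset$, there is $n$ with $V_0\cap\bigcap_i T^{-p_i(n)}V_i\neq\emptyset$.

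Let $\Omega\subset X_\infty$ be the dense $G_\delta$ set of continuity points of $x\mapsto \overline{\{(T^{p_1(n)}x,\ldots,T^{p_d(n)}x):n\in\Z\}}$ in the Hausdorff topology, as in \cite[Lemma 3.6]{Huang_Shao_Ye}. Fix $x\in\Omega$ and a point $(y_1,\ldots,y_d)$ in the $\phi_k^{(d)}$-fiber of $x^{(d)}$. Take $\epsilon$-balls $B_i$ around the $y_i$ and a $\delta$-ball $B_0$ around $x$. The theorem produces $z\in B_0$ and $n$ with $T^{p_i(n)}z\in B_i$ for all $i$. Continuity of the orbit-closure map at $x$ then puts $(y_1,\ldots,y_d)$ in the closure of the orbit of $x^{(d)}$ along $p$. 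This is exactly the argument giving \eqref{same return time app 2}.

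\textbf{Step 2: from $X$ to $X_\infty$.} Obtain the analogous inclusion for $\pi_\infty:X\to X_\infty$ on a dense $G_\delta$ subset of $X$. Here I would invoke \cite[Theorem 3.4]{GKLMRR}, equivalently the Huang--Shao--Ye theorem. The difficulty is that $\pi_\infty$ need not be open. I would handle this through Veech's $O$-diagram $X\leftarrow X^*\to X^*_\infty\to X_\infty$, with $\tau$ and $\sigma$ almost $1$-$1$ and $\pi^*_\infty$ open. First apply the continuity-point argument to the open map $\pi^*_\infty$. Then push the resulting inclusion down through $\tau$ and $\sigma$, restricting to the dense $G_\delta$ sets where their fibers are singletons, as in \cite[Lemma 3.2]{Ye_Yu}.

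\textbf{Step 3: combination, the main obstacle.} For $x$ in a suitable dense $G_\delta$ set $\Omega''\subset X$, we need any $(y_i)$ with $\pi_k(y_i)=\pi_k(x)$ to lie in the closure of the orbit of $x^{(d)}$ along $p$. The difficulty is that the $\pi_\infty(y_i)$ lie in the $\phi_k$-fiber of $\pi_\infty(x)$ but not necessarily in the $\pi_\infty$-fiber of $x$. The plan has four parts:
\begin{enumerate}[(i)]
\item Use Step 1 to approximate $(\pi_\infty(y_i))$ by $(T^{p_i(n)}\pi_\infty(x))$.
\item Take $\Omega''$ to consist of points $x$ with $\pi_\infty(x)$ a continuity point of $w\mapsto\pi_\infty^{-1}(w)$ (\cite[Lemma 2.7]{Ye_Yu}), which lets us lift approximately to points of $X$ in the appropriate $\pi_\infty$-fibers.
\item Apply Step 2 inside those fibers.
\item Ensure, via the fiberwise Baire lemma \cite[Lemma 2.5]{Ye_Yu}, that the good set from Step 2 is residual in the $\phi_k$-fibers as needed.
\end{enumerate}
Carrying out this bookkeeping of residual sets is where I expect the real work to lie. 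Finally, intersect all the residual sets to obtain $\Omega''$.
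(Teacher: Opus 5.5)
\textbf{Overall assessment.} Your architecture is the one the paper uses. The paper itself only quotes this statement from \cite{Ye_Yu}, but Appendix \ref{App B} runs exactly your Steps 1--3, for $m$ tuples at once. Taking $k$ from Theorem \ref{k factor is characteristic} is legitimate: that theorem is proved without Ye--Yu's Theorem A, and a $k$ possibly one larger than theirs is harmless for an existence statement. Steps 1 and 2 are fine.

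\textbf{The gap is Step 3.} You leave it as a plan, and the plan as written does not close, for three reasons.
\begin{enumerate}[(a)]
\item Continuity of $w\mapsto\pi_\infty^{-1}(w)$ at $\pi_\infty(x)$ does not let you lift. Knowing that $T^{p_i(n)}\pi_\infty(x)$ is close to $w_i=\pi_\infty(y_i)$, you need the fibre over $T^{p_i(n)}\pi_\infty(x)$ to pass near $y_i$. That is lower semicontinuity of $\pi_\infty^{-1}$ at $w_i$; upper semicontinuity only puts that fibre near \emph{some} part of $\pi_\infty^{-1}(w_i)$.
\item Applying the Step 2 (Huang--Shao--Ye) inclusion ``inside the fibres'' of the $y_i$, or making that good set dense in the $\pi_k$-fibres, cannot synchronise the $d$ coordinates. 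HSY at a point $u$ only describes the polynomial orbit closure of $u^{(d)}$, not that of $x^{(d)}$.
\item You cannot follow the time $n$ from Step 1 by a time $l$ from HSY for $p$ itself, since $T^{p_i(n)}T^{p_i(l)}\neq T^{p_i(n+l)}$.
\end{enumerate}

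\textbf{What is missing} is the use of shifted tuples and the right placement of the genericity conditions. Define the following sets.
\begin{enumerate}[(i)]
\item Let $\Omega_1\subset X$ be residual such that, at every $x\in\Omega_1$, the HSY inclusion holds for all tuples $q^{(n)}=(p_1(\cdot+n)-p_1(n),\ldots,p_d(\cdot+n)-p_d(n))$, $n\in\Z$. There are countably many such tuples, each distinct, non-constant and vanishing at $0$.
\item Let $\Omega_2\subset X_\infty$ be your Step 1 set.
\item Let $G=\pi_\infty^{-1}(\Omega_3)$, where $\Omega_3$ is the set of continuity points of $\pi_\infty^{-1}$.
\item Let $\Omega_4\subset X_k$ be residual with $G\cap\pi_k^{-1}(v)$ dense in $\pi_k^{-1}(v)$ for $v\in\Omega_4$. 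The fibrewise Baire lemma holds for the semi-open map $\pi_k$, because $\pi_k(U)\setminus\pi_k(U\cap D)$ is nowhere dense whenever $U$ is open and $D$ is dense open.
\end{enumerate}
Put $\Omega''=\Omega_1\cap\pi_\infty^{-1}(\Omega_2)\cap\pi_k^{-1}(\Omega_4)$, which is residual by semi-openness.

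Now fix $x\in\Omega''$ with $w=\pi_\infty(x)$, and $y_i$ in the $\pi_k$-fibre of $x$. Since $L^p_x$ is closed, you may assume each $y_i\in G$. Given open sets $V_i\ni y_i$:
\begin{enumerate}[(1)]
\item Lower semicontinuity at $\pi_\infty(y_i)$ gives $\pi_\infty(y_i)\in(\pi_\infty V_i)^\circ$.
\item Step 1 at $w$ gives $n$ with $T^{p_i(n)}w\in(\pi_\infty V_i)^\circ$ for all $i$. Hence there are points $u_i\in\pi_\infty^{-1}(w)$ with $T^{p_i(n)}u_i\in V_i$.
\item HSY for $q^{(n)}$ at $x$ gives $l$ with $T^{q^{(n)}_i(l)}x\in T^{-p_i(n)}V_i$, that is, $T^{p_i(n+l)}x\in V_i$ for every $i$.
\end{enumerate}
Note that continuity of $\pi_\infty^{-1}$ at $\pi_\infty(x)$ is never needed.
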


We first show that Theorem \ref{YeYu original} is equivalent 
to the following result. This is elementary, but we include 
the details for completeness.

\begin{theorem}\label{YeYu2}
Let $(X,T)$ be a minimal and invertible topological dynamical system, $d\in \N$ and $p_1,\ldots,p_d$ be distinct non-constant integral polynomials vanishing at $0$. Then, there are $k\in \N$ (depending only on the
polynomials) and a dense $G_{\delta}$ set $\Omega$ of $X$ with the following property. For any $x\in \Omega$, and any nonempty open sets $V_1,\ldots,V_d \subset X$ with $\pi_k(x) \in \bigcap_{i=1}^d \pi_k(V_i)$, there is $n\in \Z$ such that
$$x \in T^{-p_1(n)}V_1 \cap \cdots \cap T^{-p_d(n)}V_d,$$
where $X_k$ is the maximal k-step 
pronilfactor of $X$ and $\pi_k : X  \to X_k$ is the factor
map.    
\end{theorem}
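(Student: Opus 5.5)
The plan is to prove both directions of the equivalence between Theorem \ref{YeYu original} and Theorem \ref{YeYu2}, using the same $k$ and the same dense $G_{\delta}$ set $\Omega$ in each. The argument is a direct unwinding of definitions, since $L_x^p$ is a closure and the basic open sets of $X^d$ are products of open sets.

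\emph{Theorem \ref{YeYu original} implies Theorem \ref{YeYu2}.} Take $k$ and $\Omega$ from Theorem \ref{YeYu original}. Fix $x\in\Omega$ and nonempty open $V_1,\ldots,V_d$ with $\pi_k(x)\in\bigcap_{i=1}^d\pi_k(V_i)$.
\begin{enumerate}[(i)]
\item Pick $y_i\in V_i$ with $\pi_k(y_i)=\pi_k(x)$ for each $i$. Then $(y_1,\ldots,y_d)\in(\pi_k^{(d)})^{-1}\pi_k^{(d)}(x^{(d)})$.
\item By Theorem \ref{YeYu original}, $(y_1,\ldots,y_d)\in L_x^p$.
\item The set $V_1\times\cdots\times V_d$ is an open neighbourhood of $(y_1,\ldots,y_d)$, so it meets the orbit $\{(T^{p_1(n)}x,\ldots,T^{p_d(n)}x):n\in\Z\}$.
\item Hence there is $n\in\Z$ with $T^{p_i(n)}x\in V_i$ for all $i$, that is, $x\in\bigcap_{i=1}^d T^{-p_i(n)}V_i$.
\end{enumerate}

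\emph{Theorem \ref{YeYu2} implies Theorem \ref{YeYu original}.} Take $k$ and $\Omega$ from Theorem \ref{YeYu2}. Fix $x\in\Omega$ and $(y_1,\ldots,y_d)\in(\pi_k^{(d)})^{-1}\pi_k^{(d)}(x^{(d)})$, so $\pi_k(y_i)=\pi_k(x)$ for every $i$.
\begin{enumerate}[(i)]
\item It suffices to show that every basic neighbourhood $V_1\times\cdots\times V_d$ of $(y_1,\ldots,y_d)$ meets the orbit of $x^{(d)}$ under $(T^{p_1(n)},\ldots,T^{p_d(n)})$.
\item Since $y_i\in V_i$, we get $\pi_k(x)=\pi_k(y_i)\in\pi_k(V_i)$ for every $i$.
\item Theorem \ref{YeYu2} then gives $n\in\Z$ with $T^{p_i(n)}x\in V_i$ for all $i$.
\item As the neighbourhood was arbitrary, $(y_1,\ldots,y_d)\in L_x^p$.
\end{enumerate}

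There is no genuine obstacle. The one point to check is that the hypothesis of Theorem \ref{YeYu2} is pointwise: it asks that $\pi_k(x)$ lie in each $\pi_k(V_i)$, with no interiors involved. This matches exactly the fibre condition in Theorem \ref{YeYu original}, and the $\Omega$ and $k$ transfer unchanged in both directions.
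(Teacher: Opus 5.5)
Your proposal is correct and follows the paper's argument essentially line for line. In both directions you pick fibre points $y_i\in V_i$ with $\pi_k(y_i)=\pi_k(x)$, use that $V_1\times\cdots\times V_d$ is a basic neighbourhood meeting the orbit whose closure is $L_x^p$, and carry $k$ and $\Omega$ over unchanged; only the first direction is needed to deduce the statement from Theorem~\ref{YeYu original}.
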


\begin{proof}[Proof that Theorem \ref{YeYu original} is equivalent to Theorem \ref{YeYu2}]
(Theorem \ref{YeYu original} implies Theorem \ref{YeYu2}) 
Take any $x\in \Omega$, where $\Omega$ is the dense $G_{\delta}$ set in Theorem \ref{YeYu original}. Let $V_1,\ldots,V_d \subset X$ be nonempty, open sets with $\pi_k(x) \in \bigcap_{i=1}^d \pi_k(V_i)$. Then, there are $y_1,\ldots,y_d\in X$ satisfying $\pi_k(y_1)=\cdots=\pi_k(y_d)=\pi_k(x)$ and $y_i\in V_i$, for $i=1,\ldots,d$. Hence, $(y_1,\ldots,y_d)\in (\pi_k^{(d)})^{-1} \pi_k^{(d)} (x^{(d)}) \cap (V_1\times \cdots \times V_d)$. We can thus find $n\in \Z$ such that 
$$(T^{p_1(n)}x,\ldots,T^{p_d(n)}x) \in V_1\times \cdots \times V_d,$$
which shows Theorem \ref{YeYu2}.

(Theorem \ref{YeYu2} implies Theorem \ref{YeYu original}) 
Take any $x\in \Omega$, where $\Omega$ is the dense $G_{\delta}$ set in Theorem \ref{YeYu2}. If $y_1,\ldots,y_d\in X$ are such that $\pi_k(y_1)=\cdots=\pi_k(y_d)=\pi_k(x)$, it suffices to show that for any basic open neighborhood $V_1\times \cdots \times V_d$ of $(y_1,\ldots,y_d)$ in $X^d$, there is $n\in \N$ such that 
$$(T^{p_1(n)}x,\ldots,T^{p_d(n)}x) \in V_1\times \cdots \times V_d.$$
But if $V_1,\ldots,V_d\subset X$ are open sets with $y_i\in V_i$, for $i=1,\ldots,d$, then $\pi_k(x)=\pi_k(y_i)\in \pi_k(V_i)$, which gives that $\pi_k(x) \in \bigcap_{i=1}^d \pi_k(V_i)$ and by Theorem \ref{YeYu2} the desired conclusion.
\end{proof}

We now show that Theorem \ref{YeYu2} implies Theorem \ref{Ye Yu}. This is trivial for polynomials vanishing at $0$, but note that Theorem \ref{Ye Yu} has no such assumption. An important detail which follows from the proof of Theorem \ref{YeYu original} -- and thus also holds for Theorem \ref{YeYu2} -- is that $k$ can be chosen to depend only on the number of polynomials and the maximum degree of them. In particular, we can choose the same $k$ for all polynomial tuples $(p_1,\ldots,p_d)\in \Z[x]^d$ vanishing at $0$, with maximum degree $\leq b$. Moreover, there are countably many choices of such tuples and the countable intersection of dense $G_{\delta}$ sets is also dense $G_{\delta}$, which means that we can further choose the same $\Omega$ for all these polynomial tuples.

\begin{proof}[Proof that Theorem \ref{YeYu2} implies Theorem \ref{Ye Yu}]
Let $(X,T)$ be a minimal and invertible
topological dynamical system. Let $d\in \N$, and let $p=(p_1,\ldots,p_d)\in \Z[x]^d$ be essentially distinct. Let $k\in \N$ be the integer and $\Omega \subset X$ the dense $G_{\delta}$ set guaranteed by Theorem \ref{YeYu2} (and the comments before this proof for $b=\max\{\deg(p_1),\ldots,\deg(p_d)\}$).

Now, consider nonempty, open sets $U_1,\ldots,U_d\subset X$, with 
$R_p((\pi_k U_1)^{\circ},\ldots,(\pi_k U_d)^{\circ}) \neq \emptyset$ 
and let $n\in R_p((\pi_k U_1)^{\circ},\ldots,(\pi_k U_d)^{\circ})$. 
Then, since $T$ is a homeomorphism and thus maps interiors to 
interiors, we have that
\begin{equation}\label{eq:2}
(\pi_kT^{-p_1(n)}U_1)^{\circ} \cap \cdots \cap (\pi_kT^{-p_d(n)}U_d)^{\circ} \neq \emptyset.    
\end{equation}
We consider the polynomials $q=(q_1,\ldots,q_d)\in \Z[x]^d$ defined 
via $q_i(m)=p_i(m+n)-p_i(n)$, which vanish at $0$, they are distinct and their degrees are at most $b$. In particular, the statement of Theorem \ref{YeYu2} holds for these with $\Omega$ and $k$ specified in the beginning of this proof.

Let also $V_i=T^{-p_i(n)}U_i$, for $i=1,\ldots,d$. Since $\pi_k(\Omega)$ is dense in $X_k$, \eqref{eq:2} implies that there is $x\in \Omega$, such that $\pi_k(x)\in \bigcap_{i=1}^d \pi_k(V_i)$. Then, $R_q(V_1,\ldots,V_d) \neq \emptyset$, and unraveling the definitions, we see that this gives $R_p(U_1,\ldots,U_d) \neq \emptyset$.
\end{proof}

Finally, we remark without proof -- but it is easy to deduce this using similar arguments as above -- that Theorem \ref{Ye Yu} is actually equivalent to the following result. 

\begin{theorem}\label{YeYu3}
Let $(X,T)$ be a minimal and invertible topological dynamical system, $d\in \N$ and $p_1,\ldots,p_d$ be distinct non-constant integral polynomials vanishing at $0$. Then, there is $k\in \N$ (depending only on the number and degrees of polynomials) with the following property. For any nonempty open sets $V_1,\ldots,V_d \subset X$ with $\bigcap_{i=1}^d (\pi_k(V_i) )^{\circ} \neq \emptyset$, there is $n\in \Z$ such that
$$T^{-p_1(n)}V_1 \cap \cdots \cap T^{-p_d(n)}V_d \neq \emptyset,$$
where $X_k$ is the maximal k-step 
pronilfactor of $X$ and $\pi_k : X  \to X_k$ is the factor
map.    
\end{theorem}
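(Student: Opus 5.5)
Theorem \ref{YeYu3} is the last statement, so I prove it from Theorem \ref{Ye Yu}, which is proved (via Theorems \ref{YeYu original} and \ref{YeYu2}). The plan is to take $k$ from Theorem \ref{Ye Yu} and pad the tuple with the zero polynomial so that the hypothesis on the interiors becomes nonemptiness of a return-time set.

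Concretely, let $V_1,\ldots,V_d$ satisfy $W:=\bigcap_{i=1}^d(\pi_k(V_i))^{\circ}\neq\emptyset$. The interior is what makes the following work, since it lets us move by $T$ inside an open set of $X_k$. I would pick $r\in\Z$ and consider the $(d+1)$-tuple $\tilde p=(0,p_1,\ldots,p_d)$ with sets $(T^{-r}\tilde V, V_1,\ldots,V_d)$, where $\tilde V=X$. This tuple is essentially distinct, because the $p_i$ are distinct, non-constant and vanish at $0$, so each $p_i-p_j$ and each $p_i-0$ is non-constant.

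Now $0\in R_{\tilde p}\big((\pi_kX)^{\circ},(\pi_kV_1)^{\circ},\ldots,(\pi_kV_d)^{\circ}\big)$, because at $n=0$ every polynomial vanishes and the intersection is $X_k\cap W\neq\emptyset$. Theorem \ref{Ye Yu}, applied with $\tilde p$ (with $k$ chosen for $d+1$ polynomials of the given maximal degree), then yields $n$ with $X\cap T^{-p_1(n)}V_1\cap\cdots\cap T^{-p_d(n)}V_d\neq\emptyset$, which is exactly the conclusion. A simpler route is to apply Theorem \ref{Ye Yu} directly to $p$ itself: the hypothesis says precisely that $0\in R_p((\pi_kV_1)^{\circ},\ldots,(\pi_kV_d)^{\circ})$, since $p_i(0)=0$. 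Hence $R_p(V_1,\ldots,V_d)\neq\emptyset$, which is the claim. So the forward direction is immediate and the padding is unnecessary.

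For dependence of $k$ only on the number and degree of the polynomials, I would invoke the remark preceding the proof of Theorem \ref{YeYu2}, which says $k$ is uniform over all tuples vanishing at $0$ of degree at most $b$. Theorem \ref{Ye Yu}, as derived there, uses this uniform $k$.

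The converse, that Theorem \ref{YeYu3} implies Theorem \ref{Ye Yu}, goes by shifting. Given $n\in R_p((\pi_kU_1)^{\circ},\ldots)$, set $q_i(m)=p_i(m+n)-p_i(n)$ and $V_i=T^{-p_i(n)}U_i$. Since $T$ maps interiors to interiors, $\bigcap_i(\pi_kV_i)^{\circ}\neq\emptyset$. Theorem \ref{YeYu3} for $q$ then gives $m$ with $\bigcap_i T^{-q_i(m)}V_i\neq\emptyset$, that is, $m+n\in R_p(U_1,\ldots,U_d)$. Here $q$ consists of distinct polynomials vanishing at $0$ with degrees at most $b$, and they are non-constant because essential distinctness lets at most one $p_i$ be constant; a constant $q_i$ would be $0$, and I would handle that case by padding or by absorbing it into the other sets via the intersection with $T^{0}V_i=V_i$.

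The main obstacle is exactly this degenerate constant-polynomial case. Theorem \ref{YeYu3} excludes constant polynomials, so I would treat it by noting that a zero polynomial contributes the factor $V_i$ itself, and then apply the result to the remaining polynomials with $V_j$ replaced by suitable open subsets of $V_j\cap T^{\cdot}V_i$ whose $\pi_k$-images still have interiors meeting. Producing those subsets would use openness of the map to $X_\infty$ and semiopenness.
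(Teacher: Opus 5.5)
Your direct route is correct and is the easy deduction the paper alludes to. Since every $p_i$ vanishes at $0$, the hypothesis says $0\in R_p((\pi_kV_1)^{\circ},\ldots,(\pi_kV_d)^{\circ})$. Theorem \ref{Ye Yu} then gives $R_p(V_1,\ldots,V_d)\neq\emptyset$, using the $k$ that, by the remark before the proof that Theorem \ref{YeYu2} implies Theorem \ref{Ye Yu}, depends only on $d$ and the maximal degree. Equivalently, pick $x\in\Omega$ with $\pi_k(x)\in\bigcap_i(\pi_kV_i)^{\circ}$ and apply Theorem \ref{YeYu2}. The zero-polynomial padding is superfluous.

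The converse direction is not needed for this statement, and your proposed handling of a constant $q_i$ would not work as stated. The shifts $T^{-q_j(m)}$ vary with $m$, so no fixed open subset of $V_j\cap T^{c}V_i$ can absorb the unshifted set. Instead, replace every $q_i$ by $q_i+\ell x$ for a suitable $\ell\neq 0$, as in the proof of Theorem \ref{finite step characteristic for infinite step systems}. This leaves the hypothesis unchanged, and $\bigcap_iT^{-q_i(m)-\ell m}V_i=T^{-\ell m}\bigcap_iT^{-q_i(m)}V_i$.
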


\bibliographystyle{abbrv}
\bibliography{Refs}

\vspace{1cm}

\end{document}